\newtheorem{theorem}{Theorem}[section]
\newtheorem{lemma}[theorem]{Lemma}
\newtheorem{proposition}[theorem]{Proposition}
\newtheorem{remark}[theorem]{Remark}
\def \Cm {\mathbb{C}}
\def \Imm {\mathbb{I}}
\def \Rm {\mathbb{R}}
\def \Sm {\mathbb{S}}
\def \Um {\mathbb{U}}
\def\C{\mathcal{C}}
\def\G{\mathcal{G}}
\def\H{\mathcal{H}}
\def\M{\mathcal{M}}
\def\O{\mathcal{O}}
\newcommand{\eps}{\varepsilon}
\newcommand{\wtA}{ {\widetilde A} }
\newcommand{\where}{\quad\text{ where }}
\newcommand{\qandq}{\quad\text{ and }\quad}
\newcommand{\bfe}{ {\bf e}}
\newcommand{\bfg}{ {\bf g}}
\newcommand{\bfk}{ {\bf k}}
\newcommand{\bfr}{ {\bf r}}
\newcommand{\bfu}{ {\bf u}}
\newcommand{\bfv}{ {\bf v}}
\newcommand{\bfw}{ {\bf w}}
\newcommand{\bfP}{ {\bf P}}
\newcommand{\cout}[1]{}
\def \Sone { {\mathbb{S}^1}}
\title{Inverse anisotropic diffusion from power density measurements in two dimensions}
\author{Fran\c cois Monard\thanks{Department of Applied Physics and Applied Mathematics, Columbia University, New York NY, 10027; fm2234@columbia.edu} \and Guillaume Bal\thanks{Department of Applied Physics and Applied Mathematics, Columbia University,  New York NY, 10027; gb2030@columbia.edu}}
\begin{document}

\maketitle
\begin{abstract}
  This paper concerns the reconstruction of an anisotropic diffusion tensor $\gamma=(\gamma_{ij})_{1\leq i,j\leq 2}$ from knowledge of internal functionals of the form $\gamma\nabla u_i\cdot\nabla u_j$ with $u_i$ for $1\leq i\leq I$ solutions of the elliptic equation $\nabla \cdot \gamma \nabla u_i=0$ on a two dimensional bounded domain with appropriate boundary conditions. We show that for $I=4$ and appropriately chosen boundary conditions, $\gamma$ may uniquely and stably be reconstructed from such internal functionals, which appear in coupled-physics inverse problems involving the ultrasound modulation of electrical or optical coefficients. Explicit reconstruction procedures for the diffusion tensor are presented and implemented numerically.
 
\end{abstract}

\section{Introduction}\label{sec:intro}

Coupled-physics modalities are being extensively studied in medical imaging as a means to combine high contrast with high resolution. Such imaging modalities typically couple a high-contrast low-resolution modality with a low-contrast high-resolution modality. In this context, Ultrasound Modulated Optical Tomography (UMOT) or Ultrasound Modulated Electrical Impedance Tomography (UMEIT) aim to improve the low resolution in the reconstruction of diffusion (in OT) or conductivity (in EIT) coefficients by perturbing the medium with focused or delocalized ultrasound when making measurements. We assume here that the electric potential in EIT and the photon density in OT are modeled by the following elliptic model
\begin{align}
    -\nabla\cdot(\gamma\nabla u) = -\sum_{i,j=1}^n \partial_i \left( \gamma_{ij} \partial_j u \right) = 0,\quad \mbox{ in } X, \qquad u_{|\partial X} = g \quad \mbox{ on } \partial X,
    \label{eq:diff}
\end{align}
where $X$ is a subset in $\Rm^n$, where $n$ will equal $2$ in this paper, and where $\gamma$ is a symmetric positive definite tensor. In the case of OT, equation \eqref{eq:diff} is an approximation of a more accurate model that takes into account absorption effects by adding a zeroth order term $\sigma_a u$ with $\sigma_a\ge 0$ in the left-hand side of \eqref{eq:diff}. Whether this additional term can be handled with the present approach will be the object of future research. We also assume that the ultrasound perturbation of the medium and of the corresponding boundary (current) measurements allow us to stably reconstruct the {\em power density} of two solutions $u,v$ of \eqref{eq:diff} with prescribed boundary conditions, namely, the quantity $H[u,v]:=\gamma\nabla u\cdot\nabla v$ over $X$. How to obtain power densities in practice has been addressed in e.g. \cite{ABCTF} using highly focused ultrasonic waves, and in e.g. \cite{KK0,BBMT} in the context of synthetic focusing. 

The main objective of this paper is the reconstruction of the symmetric tensor $\gamma$ in \eqref{eq:diff} and in dimension $n=2$ from knowledge of internal measurements $\{H[u_i, u_j]\}_{i,j=1}^m$, where each $u_i$ corresponds to a different, properly chosen boundary illumination $g_i$. 

The isotropic case $\gamma\equiv \sigma\Imm_n$ was analyzed in two dimensions in \cite{CFGK} and extended to three dimensions in \cite{BBMT} and to all dimensions $n\ge 2$ and more general types of measurements of the form $\sigma^{2\alpha}|\nabla u|^2, \alpha\in\Rm, (n-2)\alpha+1\ne 0$ in \cite{MB}. The case of internal current densities of the form $|\gamma\nabla u|$ arises in current density impedance imaging, see e.g. \cite{NTT} and \cite{BRev} for a review and bibliography of recent results obtained in similar problems. Although the results in this paper also extend to general values of $\alpha$, we restrict ourselves to the case $\alpha=\frac12$ to simplify the presentation. Similar problems in dimensions $n=2$ and $n=3$ were also analyzed in a linearized context in \cite{KK}; see also the recent paper \cite{KS} on general linearized hybrid inverse problems. 

This paper extends the analyzes in \cite{MB} to the case of two-dimensional anisotropic diffusion tensors. We show that with $4$ (or, in practice, in fact $3$) properly chosen illuminations, the measurements $\{H_{ij}\}_{i,j=1}^4$ allow for a unique and stable reconstruction of the full anisotropic tensor $\gamma$. In particular, the internal functionals considered here do allow us to uniquely reconstruct the conformal structure (or normalized anisotropy tensor) $\tilde\gamma := (\det \gamma)^{-\frac{1}{2}} \gamma$, unlike the case of Dirichlet-to-Neumann boundary measurements as they appear in the Calder\'on problem, where $\gamma$ can be reconstructed up to a push-forward by an arbitrary change of variables leaving each point on $\partial X$ invariant \cite{ALP}.

More precisely, using the decomposition $\gamma = (\det \gamma)^{\frac{1}{2}} \tilde\gamma$ with $\det\tilde\gamma=1$, the anisotropy tensor $\tilde\gamma$ can be reconstructed in an algebraic and pointwise manner, after which the quantity $(\det \gamma)^{\frac{1}{2}}$ can be obtained in two possible ways, either via inverting two consecutive gradient (or, after taking divergence, Poisson) equations, or by inverting a {\em strongly coupled elliptic system} followed by a gradient (or Poisson) equation. The reconstruction of $(\det\gamma)^{\frac{1}{2}}$ is similar to the case where $\gamma$ is known up to multiplication by a scalar function. Since this problem has the same dimensionality as that of the reconstruction of an isotropic diffusion tensor (treated in \cite{CFGK, BBMT, MB}), only $m=n$ illuminations are necessary and the reconstruction can be done following the second step of the previously described approach. Although some of the techniques presented here generalize to higher dimensions, we restrict ourselves to the two-dimensional setting in this paper.

Finally, some numerical simulations confirm the theoretical predictions:
both the isotropic and the anisotropic parts of the tensor can be stably reconstructed, with a robustness to noise that is much better for the former than the latter.

Our main results are presented in section \ref{sec:statement}. Their derivation occupies sections \ref{sec:lemma}-\ref{sec:Ggamma}. Numerical simulations are shown in \ref{sec:numerics} and concluding remarks offered in section \ref{sec:conclu}.

\section{Statement of the main results}\label{sec:statement}

Let $X\subset \Rm^2$ be an open, bounded, simply connected domain. Borrowing notation from \cite{AN}, for $\kappa\ge 1$, a real $2\times 2$ symmetric matrix $\gamma = \{\gamma_{ij}\}_{i,j=1}^2$ belongs to $\M_\kappa^s(2)$ if and only if it satisfies the uniform ellipticity condition
\begin{align}
    \kappa^{-1} |\xi|^2 \le \gamma_{ij}\xi^i \xi^j \le \kappa |\xi|^2 \quad \text{for every } \xi\in \Rm^2.
    \label{eq:ellipticity}
\end{align}
In the following, we consider the problem of reconstructing an anisotropic conductivity function $\gamma\in L^\infty(X,\M^s_{\kappa_0}(2))$ where $\kappa_0 \ge 1$ is fixed, from the knowledge of power-density measurements of the form 
\begin{align*}
    H_{ij}(x) = \gamma\nabla u_i \cdot\nabla u_j, \quad 1\le i,j\le m,
\end{align*}
where for $1\le i\le m$, the function $u_i$ satisfies the conductivity equation
\begin{align}
    -\nabla\cdot (\gamma\nabla u_i) = 0 \quad \mbox{ in X}, \qquad u_i|_{\partial X} = g_i \quad \mbox{ on }\partial X.
    \label{eq:diffi}
\end{align}
The $g_i$'s are prescribed illuminations. We denote by $A$ the unique positive $\M^s(2)$-valued function that satisfies the pointwise relation $A^2(x) = \gamma(x)$. Clearly, $A\in L^\infty (X,\M^s_{\sqrt{\kappa_0}}(2))$. 

We first change unknown functions by defining for $1\le i\le m$, the vector fields  $S_i:= A\nabla u_i$. The elliptic equation \eqref{eq:diffi} thus reads
\begin{align}
    \nabla\cdot(AS_i) = 0.
    \label{eq:divSi}
\end{align}
Furthermore, denoting the ``curl'' operator in 2D by $J\nabla\cdot$ where $J = \left[ \begin{smallmatrix} 0 & -1 \\ 1 & 0 \end{smallmatrix} \right]$, the fact that $A^{-1} S_i = \nabla u_i$ implies that it is curl-free, that is,
\begin{align}
    J\nabla\cdot (A^{-1}S_i) = 0.
    \label{eq:dSi}
\end{align}
The data become $H_{ij} = S_i\cdot S_j$. We now decompose $A$ into
\begin{align}
    A = |A|^{\frac{1}{2}} \wtA, \qquad |A|:= \det A \qandq \det\wtA = 1.
    \label{eq:Adecomp}
\end{align}
From the uniform bounds $\kappa_0^{-\frac{1}{2}}\le |A|^\frac{1}{2}\le \kappa_0^{\frac{1}{2}}$, it follows immediatly that $\wtA\in L^\infty (X,\M^s_{\kappa_0})$. 

Over any open, connected set $\Omega\subseteqq X$, where two solutions $S_1,S_2$ satisfy the following positivity condition 
\begin{align}
    \inf_{x\in \Omega} (\det H)^{\frac{1}{2}} = \inf_{x\in \Omega} \det(S_1,S_2) \ge c_0 > 0, \quad H = \{S_i\cdot S_j\}_{i,j=1}^2,
    \label{eq:positivity}
\end{align}
we can derive the first important relation
\begin{align}
    \nabla\log |A| = \frac{1}{2} \nabla\log |H| + (\nabla H^{jl} \cdot \wtA S_l) \wtA^{-1} S_j = |H|^{-\frac{1}{2}} (\nabla (|H|^\frac{1}{2} H^{jl})\cdot \wtA S_l) \wtA^{-1} S_j.
    \label{eq:nlda}
\end{align}

We now orthonormalize the frame $S=(S_1,S_2)$ into a $SO(2)$-valued frame $R = (R_1,R_2)$, via a transformation of the form $R_i = t_{ij} S_j$ (or, in matrix notation $R = ST^T$, $T:= \{t_{ij}\}$), where $T$ is known from the data. For further use we denote $T^{-1} = \{t^{ij}\}$ and define the vector fields $\{V_{ij}\}_{i=1}^2$ as 
\begin{align}
    V_{ij} := \nabla (t_{ik}) t^{kj}, \quad 1\le i,j\le 2, \quad V_{ij}^a:= \frac{1}{2}(V_{ij}-V_{ji}).
    \label{eq:Vij}
\end{align}
This orthonormalization can be obtained by the Gram-Schmidt procedure or by setting $T = H^{-\frac{1}{2}}$ for instance. Since $R$ is $SO(2)$-valued, we parameterize it with a $\Sm^1$-valued function $\theta$ such that $R = \left[ \begin{smallmatrix} \cos\theta &-\sin\theta \\ \sin\theta & \cos\theta \end{smallmatrix} \right]$. One is then able to derive the following second important equation by using \eqref{eq:divSi} and geometric arguments:
\begin{align}
    \wtA^2\nabla\theta + [\wtA_2, \wtA_1] = \wtA^2 V_{12}^a - \frac{1}{2}JN,
    \label{eq:gradtheta}
\end{align}
where $N:=\frac{1}{2}\nabla\log |H|$ and $V_{12}^a$ are known from the data, and $[\wtA_2, \wtA_1] := (\wtA_2\cdot\nabla) \wtA_1 - (\wtA_1\cdot\nabla) \wtA_2$, where $\wtA_j$ denotes the $j$-th column of $\wtA$. 

\paragraph{Reconstruction of the anisotropy $\wtA$:}
We start by defining the set of admissible illuminations $\G_\gamma$ for some $\gamma\in L^\infty(X,\M_{\kappa_0}^s(2))$, by stating that a quadruple $\bfg = (g_1,g_2,g_3,g_4)$ belongs to $\G_\gamma$ if the following conditions are satisfied for some constant $c_0 >0$:
\begin{align}
    \inf_{x\in \overline{X}} \min (\det(\nabla u_1,\nabla u_2),\det(\nabla u_3,\nabla u_4) )&\ge c_0 >0, \label{eq:cond1} \\
    Y_\bfg := \frac{1}{2} J\nabla\log \left( \det (\nabla u_1,\nabla u_2) / \det (\nabla u_3, \nabla u_4) \right) &\ne 0 \quad \text{for every } x\in X,
    \label{eq:cond2}
\end{align}
where $u_i$ solves \eqref{eq:diffi} for $1\le i\le 4$. Condition \eqref{eq:cond2}, which is directly motivated by calculations later, expresses the fact that the relative variations of the volumes $\det(\nabla u_1,\nabla u_2)$ and $\det(\nabla u_3,\nabla u_4)$ differ at every point, which seems to be the condition that guarantees that our measurements are rich enough to ``see'' the anisotropy. 

Condition \eqref{eq:cond1} is rather easy to ensure by virtue of \cite[Theorem 4]{AN}, which guarantees that \eqref{eq:cond1} holds as soon as both maps $\partial X\ni x\mapsto (g_1(x),g_2(x))$ and $\partial X\ni x\mapsto (g_3(x),g_4(x))$ are homeomorphisms onto their images. Based on a construction that uses Complex Geometrical Optics (CGO) solutions, we construct in the next lemma solutions that satisfy condition \eqref{eq:cond2} under some regularity assumption on $\gamma$. This in turn guarantees that $\G_\gamma$ is not empty when $\gamma$ is smooth enough.
\begin{lemma} \label{lem:Ggamma}
    Let $\gamma\in L^\infty(X,\M_{\kappa_0}^s(2))$ for some $\kappa_0 \ge 1$ be such that $|\gamma|^{\frac{1}{2}} \in H^{5+\varepsilon}(X)$ for some $\varepsilon>0$ and the function $\nu:X\to \Cm$ defined as 
    \begin{align}
	X\ni x \mapsto \nu(x) = \frac{\gamma_{22} - \gamma_{11} - 2i\gamma_{12}}{\gamma_{11}+\gamma_{22}+2|\gamma|^{\frac{1}{2}}}(x)
	\label{eq:nu_lem}
    \end{align}
    is locally of class $\C^4$ over $X$. Then the set $\G_\gamma$ defined by conditions \eqref{eq:cond1} and \eqref{eq:cond2} is not empty and contains an open set of sufficiently smooth boundary conditions.
\end{lemma}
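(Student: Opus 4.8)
The plan is to exploit the two-dimensional reduction of an anisotropic conductivity to an isotropic one, to build the four illuminations from Complex Geometrical Optics (CGO) solutions, and to choose their frequencies so that both \eqref{eq:cond1} and \eqref{eq:cond2} hold; condition \eqref{eq:cond1} alone also follows from \cite[Theorem 4]{AN}, but it is the CGO construction that lets us control the more delicate condition \eqref{eq:cond2}.

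First I would note that $\nu$ in \eqref{eq:nu_lem} is precisely the complex dilatation attached to $\gamma$: a short computation using the ellipticity \eqref{eq:ellipticity} gives $|\nu|^2 = (\gamma_{11}+\gamma_{22}-2|\gamma|^{\frac12})/(\gamma_{11}+\gamma_{22}+2|\gamma|^{\frac12}) \le k < 1$ with $k=k(\kappa_0)$. Since $\nu$ is locally $\C^4$, there is a quasiconformal homeomorphism $\Phi\colon X\to\wtX$ of dilatation $\nu$ which, since solving the Beltrami equation gains one derivative, is a $\C^5$ diffeomorphism, and which pushes $\gamma$ forward to the scalar conductivity $\tilde\sigma = |\gamma|^{\frac12}\circ\Phi^{-1}$ (a scalar times the identity), the determinant $|\gamma|$ being the relevant conformal invariant. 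The hypothesis $|\gamma|^{\frac12}\in H^{5+\varepsilon}(X)$ then gives $\tilde\sigma$ enough regularity that the Liouville substitution $w=\tilde\sigma^{\frac12}\tilde u$ turns $\nabla\cdot\tilde\sigma\nabla\tilde u=0$ into a Schr\"odinger equation $\Delta w = qw$ with $q\in H^{3+\varepsilon}\hookrightarrow\C^2$, for which one has CGO solutions $w_\rho = e^{\rho\cdot y}(1+\psi_\rho)$, $\rho\cdot\rho=0$, with $\|\psi_\rho\|_{W^{2,\infty}}$ controlled and $o(1)$ as $|\rho|\to\infty$.

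Next I would assemble the illuminations in the $\wtX$ coordinates and pull them back by $u_i := \tilde u_i\circ\Phi$, which solves \eqref{eq:diffi}. For $(u_1,u_2)$, fix $\rho^{(1)} = \tau(\omega_1+i\omega_1^\perp)$ with $\omega_1$ a unit vector and take $\tilde u_1,\tilde u_2$ to be the real and imaginary parts of $\tilde\sigma^{-\frac12}w_{\rho^{(1)}}$; computing the leading term gives $\det(\tilde\nabla\tilde u_1,\tilde\nabla\tilde u_2) = \tilde\sigma^{-1}\tau^2 e^{2\tau\omega_1\cdot y}(1+o(1))$ uniformly, which never vanishes. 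For $(u_3,u_4)$ I choose a distinct direction $\omega_2\ne\omega_1$ through $\rho^{(2)}=\tau(\omega_2+i\omega_2^\perp)$. Pulling back multiplies each determinant by the common nonvanishing factor $\det(D\Phi)$, so \eqref{eq:cond1} holds on the compact set $\overline X$ for $\tau$ large. Crucially, both $\det(D\Phi)$ and the amplitude $\tilde\sigma^{-1}$ are common to the two pairs and cancel in the ratio $r := \det(\tilde\nabla\tilde u_1,\tilde\nabla\tilde u_2)/\det(\tilde\nabla\tilde u_3,\tilde\nabla\tilde u_4)$ (the physical ratio in \eqref{eq:cond2} equals $r\circ\Phi$): the leading asymptotics give $\tilde\nabla\log r = 2\tau(\omega_1-\omega_2)+O(1)$, so that $Y_\bfg = \tfrac12 J(D\Phi)^T(\tilde\nabla\log r)\circ\Phi$ is bounded away from $0$ on $\overline X$ for $\tau$ large, since $J$ and $D\Phi$ are invertible and $\omega_1\ne\omega_2$.

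The hard part will be making these asymptotics rigorous and \emph{uniform up to the boundary}: the cancellation producing \eqref{eq:cond2} leaves a term of size $\tau$ competing against gradients of the CGO remainders, so I need $W^{2,\infty}$ control on $\psi_\rho$ (one derivative to pass from $w_\rho$ to the gradients $\tilde\nabla\tilde u_i$, a second to differentiate the logarithm of the determinant), and it is exactly this that forces the hypotheses $|\gamma|^{\frac12}\in H^{5+\varepsilon}$ and $\nu\in\C^4$; the same budget makes $\Phi$ and $\tilde\sigma$ smooth enough that the error terms transfer correctly under the pull-back. Once the strict inequality in \eqref{eq:cond1} and a uniform bound $|Y_\bfg|\ge c_1>0$ on $\overline X$ are in hand, openness is immediate: by elliptic well-posedness and the assumed regularity of $\gamma$, the map $\bfg\mapsto(u_1,\dots,u_4)$ is continuous from $\C^{2,\alpha}(\partial X)^4$ into $\C^1(\overline X)^4$, hence the determinants and their log-gradients depend continuously on $\bfg$; being cut out by the open conditions \eqref{eq:cond1}--\eqref{eq:cond2}, $\G_\gamma$ therefore contains an entire neighborhood of the constructed quadruple of smooth boundary conditions.
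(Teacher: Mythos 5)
Your proposal is correct and follows essentially the same route as the paper: reduction to an isotropic conductivity via the quasiconformal (Beltrami) diffeomorphism with complex dilatation $\nu$, which the $\C^4$ hypothesis makes a $\C^5$ diffeomorphism pushing $\gamma$ to $|\gamma|^{\frac{1}{2}}\circ\phi^{-1}$; CGO solutions with $\C^2$-controlled remainders in two distinct directions $\omega_1\ne\omega_2$, whose determinant and log-ratio asymptotics give \eqref{eq:cond1} and \eqref{eq:cond2} for large frequency; pull-back of both conditions through the invertible Jacobian; and openness by continuity. The only difference is organizational — you change variables first and then build the CGOs, whereas the paper proves the isotropic case first and then reduces the general case to it — which does not change the substance of the argument.
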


Consider now $(g_1,g_2,g_3,g_4)\in\G_\gamma$, and let us denote $\bfg^{(1)} = (g_1,g_2)$ and $\bfg^{(2)} = (g_3,g_4)$. Define two orthonormal frames $R^{(i)} = [R_1^{(i)}| R_2^{(i)}],\ i=1,2$, obtained after orthonormalization of $S^{(1)} = [S_1|S_2]$, and $S^{(2)} = [S_3|S_4]$. Taking the difference of equations \eqref{eq:gradtheta} for each system, we obtain the algebraic equation
\begin{align}
    \wtA^2 X_\bfg = Y_\bfg, \where\quad X_\bfg:= \nabla (\theta_2-\theta_1) - V_{12}^{a(2)} + V_{12}^{a(1)},
    \label{eq:algA}
\end{align}
and $Y_\bfg$ is defined in \eqref{eq:cond2}. Both vector fields $X_\bfg$ and $Y_\bfg$ are uniquely determined by the data, see below. Since $\wtA$ is only described by two scalar parameters, equation \eqref{eq:algA} together with the condition \eqref{eq:cond2} allow us to reconstruct these two parameters algebraically and in a pointwise manner. When orthonormalization uses the Gram-Schmidt procedure, $X_\bfg$ and $Y_\bfg$ satisfy the following boundedness and stability inequalities for some constants $C_1, C_2$:
\begin{align}
    \begin{split}
	\max ( \|X_\bfg\|_{L^\infty(X)}, \|Y\|_{L^\infty(X)} ) &\le C_1 \|H\|_{W^{1,\infty}(X)}, \\
	\max ( \|X_\bfg-X_\bfg'\|_{L^\infty(X)}, \|Y_\bfg-Y_\bfg'\|_{L^\infty(X)} ) &\le C_2 \|H-H'\|_{W^{1,\infty}(X)},	
    \end{split}
    \label{eq:XYGS}    
\end{align}
where $H = \{H_{ij}\}_{i,j=1}^4$ and $H' = \{H'_{ij}\}_{i,j=1}^4$ respectively come from $\bfg\in\G_\gamma$ and $\bfg\in\G_{\gamma'}$. We arrive at the following result:
\begin{theorem}[Anisotropy reconstruction in 2d with $m=4$] \label{thm:anisotropy}
    For $\bfg \in \G_\gamma$, the measurements $H = \{H_{ij}\}_{i,j=1}^4$ uniquely determine the tensor $\wtA$ via the explicit algebraic equation \eqref{eq:algA}. 
    Moreover, for $\gamma,\gamma'$ with $\bfg\in\G_\gamma$ and $\bfg\in\G_{\gamma'}$ with the corresponding measurements $H_{ij},H'_{ij} \in W^{1,\infty}(X)$ for $1\le i,j\le 2$, and in the case where orthonormalization is done using the Gram-Schmidt procedure, the following stability statement holds:
    \begin{align}
	\|\wtA-\wtA'\|_{L^\infty(X)} \le C \|H-H'\|_{W^{1,\infty}},
	\label{eq:stabA}
    \end{align}
    for some constant $C$. 
\end{theorem}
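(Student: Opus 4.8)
The plan is to derive both assertions from the single algebraic identity \eqref{eq:algA}, $\widetilde{A}^2 X_\bfg = Y_\bfg$, by treating $\widetilde\gamma := \widetilde{A}^2$ as the primary unknown and recovering $\widetilde{A}$ at the very end as the unique symmetric positive square root of $\widetilde\gamma$. Since $\widetilde{A}\in\M^s_{\kappa_0}$ has eigenvalues in $[\kappa_0^{-1},\kappa_0]$ with $\det\widetilde{A}=1$, the matrix $\widetilde\gamma$ is symmetric positive definite with $\det\widetilde\gamma=1$ and eigenvalues in $[\kappa_0^{-2},\kappa_0^2]$, and the map $\widetilde{A}\mapsto\widetilde{A}^2$ is a bijection of such tensors onto themselves. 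It therefore suffices to show that \eqref{eq:algA} together with the constraint $\det\widetilde\gamma=1$ determines $\widetilde\gamma$ uniquely and stably.

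First I would establish pointwise uniqueness. Condition \eqref{eq:cond2} gives $Y_\bfg\neq 0$, and since $\widetilde\gamma$ is invertible this forces $X_\bfg=\widetilde\gamma^{-1}Y_\bfg\neq 0$; moreover $X_\bfg\cdot Y_\bfg=X_\bfg^{T}\widetilde\gamma X_\bfg>0$ by positive definiteness. Working in the basis $\{X_\bfg, JX_\bfg\}$ of $\Rm^2$, the relation $\widetilde\gamma X_\bfg=Y_\bfg$ prescribes the action of $\widetilde\gamma$ on $X_\bfg$; symmetry forces $\langle \widetilde\gamma JX_\bfg, X_\bfg\rangle=\langle JX_\bfg, Y_\bfg\rangle=\det(X_\bfg,Y_\bfg)$, which fixes the $X_\bfg$-component of $\widetilde\gamma JX_\bfg$ after dividing by $|X_\bfg|^2$; finally $\det\widetilde\gamma=1$, rewritten via multilinearity as $\det(\widetilde\gamma X_\bfg,\widetilde\gamma JX_\bfg)=\det(X_\bfg,JX_\bfg)=|X_\bfg|^2$, fixes the remaining $JX_\bfg$-component after dividing by $X_\bfg\cdot Y_\bfg>0$. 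This yields an explicit rational expression $\widetilde\gamma=F(X_\bfg,Y_\bfg)$ whose entries are smooth wherever $X_\bfg\cdot Y_\bfg$ is bounded away from $0$. Uniqueness among all symmetric determinant-one matrices follows; since the true $\widetilde\gamma$ is one such solution, the value returned by $F$ is automatically positive definite, and its positive square root recovers $\widetilde{A}$ uniquely, proving the first assertion.

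For stability I would first upgrade the pointwise nonvanishing to uniform bounds. Membership $\bfg\in\G_\gamma\cap\G_{\gamma'}$ with continuity of $Y_\bfg,Y_\bfg'$ on the compact set $\overline X$ gives a common lower bound $|Y_\bfg|,|Y_\bfg'|\ge c>0$; combined with $|X_\bfg|\ge\kappa_0^{-2}|Y_\bfg|$ and $X_\bfg\cdot Y_\bfg\ge\kappa_0^{-2}|X_\bfg|^2$, this keeps all denominators of $F$ away from zero uniformly in $x$ and for both tensors, while \eqref{eq:XYGS} bounds $|X_\bfg|,|Y_\bfg|$ from above. On this compact range $F$ is Lipschitz, so $\|\widetilde\gamma-\widetilde\gamma'\|_{L^\infty}\le C\big(\|X_\bfg-X_\bfg'\|_{L^\infty}+\|Y_\bfg-Y_\bfg'\|_{L^\infty}\big)$. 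Because the eigenvalues of $\widetilde\gamma,\widetilde\gamma'$ stay above $\kappa_0^{-2}$, the matrix square root is Lipschitz there, giving $\|\widetilde{A}-\widetilde{A}'\|_{L^\infty}\le C'\|\widetilde\gamma-\widetilde\gamma'\|_{L^\infty}$. Chaining these with the Gram--Schmidt estimate \eqref{eq:XYGS}, which bounds $\max(\|X_\bfg-X_\bfg'\|,\|Y_\bfg-Y_\bfg'\|)$ by $C_2\|H-H'\|_{W^{1,\infty}}$, yields \eqref{eq:stabA}.

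I expect the main obstacle to be the uniformity of the lower bound on $X_\bfg\cdot Y_\bfg$ (equivalently on $|Y_\bfg|$) holding simultaneously for $\gamma$ and $\gamma'$: the pointwise condition \eqref{eq:cond2} must be promoted to a positive constant valid on all of $\overline X$ and for both media, since it is precisely this denominator that controls the Lipschitz constant of the reconstruction map $F$. The remaining ingredients are elementary linear algebra, the standard Lipschitz continuity of the square root away from degenerate eigenvalues, and the already-established bounds \eqref{eq:XYGS}.
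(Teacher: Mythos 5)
Your proposal is correct and follows essentially the same route as the paper: a pointwise algebraic inversion of \eqref{eq:algA} using symmetry and the unit-determinant constraint, with the key nondegeneracy being exactly the paper's observation that $X_\bfg\cdot Y_\bfg = \|\wtA^{-1}Y_\bfg\|^2 \ge \kappa_0^{-2}\|Y_\bfg\|^2 > 0$ wherever $Y_\bfg \ne 0$, and stability obtained by chaining the Lipschitz dependence of the resulting rational formulas with the Gram--Schmidt estimates \eqref{eq:XYGS}. The only differences are presentational: the paper performs the inversion through the explicit parameterization \eqref{eq:paramA2}, yielding the closed formulas \eqref{eq:reconsxizeta}, rather than your frame $\{X_\bfg, JX_\bfg\}$, and it relegates the square-root step $\wtA^2\mapsto\wtA$ to the explicit formulas \eqref{eq:reconslambdamu}; note also that the uniform positivity of $X_\bfg\cdot Y_\bfg$ on $\overline X$, which you rightly flag as the crux (condition \eqref{eq:cond2} only gives nonvanishing on the open set $X$), is left equally implicit in the paper's own proof.
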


\begin{remark}
    In practice, we have observed numerically that $m=3$ was enough to reconstruct $\gamma$ if we chose $\bfg\in\G_\gamma$ of the form $(g_1,g_2,g_2,g_3)$. 
\end{remark}

\paragraph{Reconstruction of $(\theta,\log |A|)$ or $(u_1,u_2,|A|^{-1})$:}
Once the anisotropy $\wtA$ is known, the problem of reconstructing $|A|$ now has the same dimensionality as that of reconstructing an isotropic conductivity. It requires only $m=2$ internal functionals satisfying \eqref{eq:positivity} in $X$.

A first approach towards reconstructing $|A|$ consists in solving a gradient equation for the scalar quantities $\theta$ and then $\log |A|$, the right-hand sides of which are successively known. If $\theta$ and $|A|$ are known throughout the domain's boundary, one may take the divergence of said gradient equations instead and solve Poisson equations with Dirichlet boundary conditions. 

Such an approach provides Lipschitz-stable reconstructions as is summarized in 

\begin{theorem}[Stability of $|A|$, first approach]\label{thm:stab1}
    Assume that $\wtA$ is either known or reconstructed as in theorem \ref{thm:anisotropy} with the stability estimate \eqref{eq:stabA}. Then $|A|$ is uniquely determined by $\{H_{ij}\}_{1\le i,j\le 2}\in W^{1,\infty}$ satisfying \eqref{eq:positivity}. Moreover, if two set $H$ and $H'$ jointly satisfy the previous assumptions, the corresponding reconstructed coefficients $|A|$ and $|A'|$ satisfy the stability inequality
    \begin{align}
	\| \log |A| - \log |A'| \|_{W^{1,\infty}(X)} \le C \|H-H'\|_{W^{1,\infty}}. 
	\label{eq:stablogdetA}
    \end{align}
\end{theorem}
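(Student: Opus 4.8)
The plan is to turn the two structural identities \eqref{eq:nlda} and \eqref{eq:gradtheta} into a pair of first-order (gradient) equations that are solved in succession, exploiting the fact that once $\wtA$ is in hand their right-hand sides become successively computable from the data. Rewriting \eqref{eq:gradtheta} as
\begin{align}
    \nabla\theta = V_{12}^a - \wtA^{-2}\Big([\wtA_2,\wtA_1] + \tfrac12 JN\Big) =: F_\theta,
    \label{eq:plantheta}
\end{align}
with $V_{12}^a$ and $N=\tfrac12\nabla\log|H|$ read off from the data through \eqref{eq:Vij}, gives a gradient equation for $\theta$ whose right-hand side $F_\theta$ is entirely known. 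Integrating it (the field $F_\theta$ being curl-free for genuine data) with the prescribed boundary value $\theta|_{\partial X}$ determines $\theta$, hence the rotation $R=R(\theta)$. I would then recover the frame itself from the data via $S = R\,H^{\frac12}$, which follows from $R = S T^T$ with $T = H^{-\frac12}$. With $S$ and $\wtA$ now available, the right-hand side of \eqref{eq:nlda},
\begin{align}
    \nabla\log|A| = |H|^{-\frac12}\big(\nabla(|H|^{\frac12}H^{jl})\cdot\wtA S_l\big)\wtA^{-1}S_j =: F_A,
    \label{eq:planlogA}
\end{align}
is computable, and a second integration (with the known boundary value of $\log|A|$) yields $\log|A|$, hence $|A|$. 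This uses only the two solutions entering $H=\{H_{ij}\}_{i,j=1}^2$, in agreement with the announced dimensionality. Uniqueness is then immediate: $F_\theta$ and $F_A$ are uniquely fixed by the data and $\wtA$, and the prescribed boundary values close the two integrations. Equivalently, taking the divergence of \eqref{eq:plantheta} and \eqref{eq:planlogA} replaces the path integration by two Poisson problems with Dirichlet data, which is the formulation I would use numerically.

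For the stability estimate \eqref{eq:stablogdetA}, I would propagate Lipschitz bounds along this same chain, repeatedly invoking the positivity condition \eqref{eq:positivity}: the uniform lower bound $\det(S_1,S_2)\ge c_0$ together with the ellipticity constant $\kappa_0$ makes the operations $H\mapsto H^{-\frac12}$, $H\mapsto\nabla T$, $H\mapsto\log|H|$ and $H\mapsto H^{\frac12}$ Lipschitz from $W^{1,\infty}$ into $L^\infty$, with constants depending only on $c_0$ and $\kappa_0$. This gives $\|F_\theta-F_\theta'\|_{L^\infty}\le C\|H-H'\|_{W^{1,\infty}}$, and integrating \eqref{eq:plantheta} over the bounded, simply connected $X$ produces $\|\theta-\theta'\|_{L^\infty}\le C\|H-H'\|_{W^{1,\infty}}$. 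Since $R$ depends smoothly on $\theta$, this yields $\|S-S'\|_{L^\infty}\le C\|H-H'\|_{W^{1,\infty}}$. Feeding these bounds, together with $\|\wtA-\wtA'\|_{L^\infty}\le C\|H-H'\|_{W^{1,\infty}}$ from Theorem \ref{thm:anisotropy}, into $F_A$ gives $\|\nabla\log|A|-\nabla\log|A'|\|_{L^\infty}=\|F_A-F_A'\|_{L^\infty}\le C\|H-H'\|_{W^{1,\infty}}$; a final integration of \eqref{eq:planlogA} supplies the companion $L^\infty$ bound on $\log|A|-\log|A'|$, and the two together give \eqref{eq:stablogdetA}.

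I expect the delicate point to be the term $[\wtA_2,\wtA_1]$ in \eqref{eq:plantheta}, which involves first derivatives of $\wtA$: controlling its perturbation in $L^\infty$ would require $\wtA$ in $W^{1,\infty}$, whereas Theorem \ref{thm:anisotropy} provides only an $L^\infty$ bound on $\wtA-\wtA'$. This is precisely why the statement is phrased with $\wtA$ \emph{known}: when the anisotropy is taken as given (common to the two comparison media), the $[\wtA_2,\wtA_1]$ contributions to $F_\theta$ are identical and cancel in the difference, so the $\theta$-step depends on the data only through $\|H-H'\|_{W^{1,\infty}}$. A second, milder subtlety is regularity bookkeeping: direct integration of the gradient equations keeps every quantity at the $W^{1,\infty}$ level with no loss of derivatives, whereas the Poisson reformulation nominally asks for $\nabla\cdot F_A$ and hence one further derivative of $H$. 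Working with the gradient form (or with the weak Poisson formulation together with the positivity-based bounds) is what makes the Lipschitz estimate \eqref{eq:stablogdetA} land in $W^{1,\infty}$.
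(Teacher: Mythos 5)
Your proposal is correct and follows essentially the same route as the paper: stability for $\theta$ via the gradient equation \eqref{eq:gradtheta_alone}, then stability for $\log|A|$ via the second gradient equation (the paper uses the $R(\theta)$-frame form \eqref{eq:nla2d} where you reconstruct $S=RH^{\frac12}$ and use \eqref{eq:nlda}, an equivalent bookkeeping choice), with Lipschitz bounds propagated through both integrations. Your observation that the $[\wtA_2,\wtA_1]$ term must cancel in the difference---since Theorem \ref{thm:anisotropy} only controls $\wtA-\wtA'$ in $L^\infty$---is exactly the cancellation the paper's difference equation relies on implicitly.
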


A second approach consists in inserting the expression in equation \eqref{eq:nlda} into the elliptic equation \eqref{eq:diffi} and deriving a strongly coupled elliptic system for the unknown $(u_1,u_2)$. In two dimensions, this system turns out to have a variational formulation with a coercive bilinear form: 
\begin{align}
    \nabla\cdot (\wtA^2 |H|^{\frac{1}{2}} H^{ij} \nabla u_j ) = 0, \quad u_i|_{\partial X} = g_i. \quad i=1,2,
    \label{eq:sces2d}
\end{align}
It thus admits a unique solution in the following space (up to an additive $H^1$-lifting of $(g_1,g_2)$) 
\begin{align}
    \H := (H_0^1(X))^2, \quad \bfu=(u_1,u_2) \in\H \quad\text{iff}\quad \|\bfu\|_\H^2:= \int_X |\nabla u_1|^2+|\nabla u_2|^2\ dx < \infty.
    \label{eq:H}
\end{align}
Using a Fredholm-type argument, we obtain that this solution is also stable with respect to changes in the data, as stated in the following result:
\begin{proposition}[Stability of the strongly coupled elliptic system] \label{prop:sces}
    Let $H, H'$ have their components in $W^{1,\infty}(X)$ and satisfy \eqref{eq:positivity}. If $\bfu, \bfu'$ are the unique solutions to \eqref{eq:sces2d} with the same illumination $\bfg$, then $\bfu-\bfu'\in\H$ and satisfies the stability estimate:
    \begin{align}
	\|\bfu-\bfu'\|_\H \le C \|H-H'\|_{W^{1,\infty}}. 
	\label{eq:stabu}
    \end{align}    
\end{proposition}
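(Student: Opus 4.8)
The plan is to reduce the statement to a single coercive boundary-value problem satisfied by the difference $\bfw := \bfu - \bfu'$, and then to run the standard energy/perturbation argument, the only nonroutine input being a Lipschitz estimate for the coefficient map $H\mapsto \wtA^2|H|^{\frac12}H^{-1}$. Since $\bfu$ and $\bfu'$ share the boundary data $\bfg$, the difference $\bfw$ has vanishing trace and hence belongs to $\H$. Writing the two weak formulations against an arbitrary $\bfv\in\H$, namely $a_H(\bfu,\bfv)=0$ and $a_{H'}(\bfu',\bfv)=0$ with
\[
a_H(\bfu,\bfv) := \int_X |H|^{\frac12} H^{ij}\,\wtA^2\nabla u_j\cdot\nabla v_i\,dx,
\]
and subtracting after inserting $\pm\, a_H(\bfu',\bfv)$, one finds that $\bfw$ solves
\[
a_H(\bfw,\bfv) = \int_X \big( \wtA'^2|H'|^{\frac12}H'^{ij} - \wtA^2|H|^{\frac12}H^{ij}\big)\nabla u'_j\cdot\nabla v_i\,dx \qquad\text{for all } \bfv\in\H.
\]
Thus $\bfw$ solves the $H$-system with zero boundary data and a divergence-form forcing whose flux is the coefficient mismatch applied to $\nabla\bfu'$.

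Next I would invoke the coercivity of $a_H$ asserted above. Because $|H|=\det H\ge c_0^2$ and $\|H\|_{L^\infty}$ is controlled, the eigenvalues of $|H|^{\frac12}H^{-1}$ are bounded above and below; while $\wtA\in\M^s_{\kappa_0}$ with $\det\wtA=1$ confines the eigenvalues of $\wtA^2$ to $[\kappa_0^{-2},\kappa_0^2]$. Hence the coercivity constant $c>0$ of $a_H$ is uniform over all data obeying \eqref{eq:positivity} and the assumed bounds. Testing the identity above with $\bfv=\bfw$ and applying Cauchy--Schwarz to the right-hand side gives
\[
c\,\|\bfw\|_\H^2 \le \big\| \wtA^2|H|^{\frac12}H^{-1} - \wtA'^2|H'|^{\frac12}H'^{-1}\big\|_{L^\infty(X)}\,\|\bfu'\|_\H\,\|\bfw\|_\H .
\]
The energy $\|\bfu'\|_\H$ is bounded uniformly by the standard a priori estimate for the (coercive) $H'$-problem in terms of $\bfg$ and the ellipticity constants, so it is absorbed into the final constant, leaving $\|\bfw\|_\H$ controlled by the $L^\infty$ coefficient mismatch alone.

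The crux, and the step I expect to require the most care, is the Lipschitz bound
\[
\big\| \wtA^2|H|^{\frac12}H^{-1} - \wtA'^2|H'|^{\frac12}H'^{-1}\big\|_{L^\infty(X)} \le C\,\|H-H'\|_{W^{1,\infty}(X)}.
\]
I would split the mismatch into the algebraic factor $|H|^{\frac12}H^{-1}$ and the anisotropy factor $\wtA^2$. The map $H\mapsto |H|^{\frac12}H^{-1}$ is smooth on the set $\{\det H\ge c_0^2,\ \|H\|\le M\}$, hence Lipschitz there with respect to $\|H-H'\|_{L^\infty}$; combined with the uniform upper bound on $\wtA$, this controls the contribution of that factor. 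For the anisotropy factor I would write $\wtA^2-\wtA'^2 = \wtA(\wtA-\wtA') + (\wtA-\wtA')\wtA'$ and use the boundedness of $\wtA,\wtA'$ together with the algebraic stability estimate \eqref{eq:stabA} of Theorem \ref{thm:anisotropy}, which yields $\|\wtA-\wtA'\|_{L^\infty}\le C\|H-H'\|_{W^{1,\infty}}$. It is precisely this reconstruction step that forces the data difference to be measured in $W^{1,\infty}$ rather than $L^\infty$, since $\wtA$ is recovered from $H$ through an expression involving one derivative of the data. Assembling the two pieces gives the displayed coefficient bound, and chaining it with the energy estimate of the previous paragraph yields \eqref{eq:stabu}.

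Finally, should strict coercivity be available only in the weaker Gårding form $a_H(\bfw,\bfw)\ge c\|\bfw\|_\H^2 - C\|\bfw\|_{L^2}^2$, the same conclusion follows by a Fredholm-type argument: uniqueness of the homogeneous $H$-problem upgrades to invertibility with a uniform bound on the solution operator, after which the perturbation computation above is unchanged. Either way the final constant depends only on $c_0$, $\kappa_0$, the $W^{1,\infty}$ bounds on $H,H'$, and $\bfg$.
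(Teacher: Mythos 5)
Your proof is correct, but it follows a genuinely different route from the paper's. The paper proves stability through the \emph{non-divergence} form \eqref{eq:scesw}: it inverts the principal part $L=-\nabla\cdot(\wtA^2\nabla\cdot)$, recasts the system as the Fredholm equation $(I+\bfP_W)\bfw=\{L^{-1}f_i\}$ in \eqref{eq:sces_PW}, uses compactness of $L^{-1}:L^2\to H^1_0$ and the bound \eqref{eq:PWnorm}, concludes that $(I+\bfP_W)^{-1}$ is bounded because uniqueness (from the coercive divergence form) rules out $-1$ as an eigenvalue, and then perturbs this operator identity using $\|W-W'\|_{L^\infty}\le C\|H-H'\|_{W^{1,\infty}}$, where $W$ defined in \eqref{eq:W} involves one derivative of $H$. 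You instead stay entirely in the \emph{divergence} form \eqref{eq:sces_var}: subtract the two weak formulations, obtain a problem for $\bfw=\bfu-\bfu'$ with forcing given by the coefficient mismatch applied to $\nabla\bfu'$, and close the estimate with the coercivity of $B$ (which the paper already established for existence) together with a pointwise Lipschitz bound on $H\mapsto|H|^{\frac12}H^{-1}$. Your argument is more elementary --- no compactness, no $L^2\to H^2$ regularity, no Fredholm alternative --- and it even sharpens the statement in the known-anisotropy case: when $\wtA=\wtA'$ is fixed, your right-hand side needs only $\|H-H'\|_{L^\infty}$, the $W^{1,\infty}$ norm entering solely through the reconstruction of $\wtA$ via \eqref{eq:stabA}. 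What the paper's heavier machinery buys is generality: the Fredholm argument applies to the non-divergence system, which is the form that survives in dimensions $n\ge 3$ (following \cite{MB}), whereas your energy argument exploits the coercive divergence structure that is special to the two-dimensional setting. Two small points of bookkeeping: $\bfu'$ does not lie in $\H$ (its trace is $\bfg$, not zero), so the quantity appearing in your Cauchy--Schwarz step should be the Dirichlet energy $\|\nabla\bfu'\|_{L^2(X)}$, bounded via the lifting $\bfv$ and Lax--Milgram; and your final paragraph on the G\aa rding-inequality fallback is unnecessary here, since strict coercivity holds pointwise from \eqref{eq:positivity}, the $L^\infty$ bound on $H$, and $\wtA^2\in\M^s_{\kappa_0^2}(2)$.
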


Once the couple $(u_1,u_2)$ is reconstructed throughout $X$, one may reconstruct $|A|^{-1}$ using (a modified version of) the gradient equation \eqref{eq:nlda}. 

\begin{theorem}[Stability of $|A|$, second approach] \label{thm:known_isotropy}
    Let the conditions of proposition \ref{prop:sces} be satisfied. Then the reconstructed determinants $|A|$ and $|A'|$ satisfy the stability estimate:
    \begin{align}
	\| |A|^{-1} - |A'|^{-1} \|_{H^1(X)} \le C \|H-H'\|_{W^{1,\infty}(X)}. 
	\label{eq:stabdetA}
    \end{align}
\end{theorem}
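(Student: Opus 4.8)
The plan is to combine the $H^1$-stability of the reconstructed potentials $\bfu=(u_1,u_2)$ from Proposition~\ref{prop:sces} with two expressions for $\beta:=|A|^{-1}$ that become available once $\wtA$ and $\bfu$ are known, controlling the $L^2$ part of $\|\beta-\beta'\|_{H^1}$ by one of them and the gradient part by the other. Writing $S_j=|A|^{\frac12}\wtA\nabla u_j$ and using $\wtA^T=\wtA$, the data read $H_{ij}=S_i\cdot S_j=|A|\,\nabla u_i\cdot\wtA^2\nabla u_j$; taking determinants and using $\det\wtA=1$ gives the pointwise algebraic identity
\begin{align*}
    \beta=|A|^{-1}=\frac{\det(\nabla u_1,\nabla u_2)}{(\det H)^{\frac12}},
\end{align*}
whose numerator is bounded below by \eqref{eq:cond1} and denominator by \eqref{eq:positivity}. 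On the other hand, substituting $\wtA^{-1}S_j=|A|^{\frac12}\nabla u_j$ and $\wtA S_l=|A|^{\frac12}\wtA^2\nabla u_l$ into \eqref{eq:nlda}, together with $\nabla\log|A|=-|A|\nabla\beta$, yields the modified gradient equation
\begin{align*}
    \nabla\beta=-\tfrac12\,\beta\,\nabla\log|H|-(\nabla H^{jl}\cdot\wtA^2\nabla u_l)\,\nabla u_j=:\beta\,\bP+\bQ,
\end{align*}
whose right-hand side involves only $\nabla u$ (not $\nabla^2 u$), $\wtA$, and $H,\nabla H$. This is exactly why it, rather than a differentiated algebraic formula, is the right tool to control $\nabla\beta$ in $L^2$.

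I would then assemble the estimate from three facts, all valid under \eqref{eq:positivity}: (i) Proposition~\ref{prop:sces} gives $\|\nabla u_i-\nabla u_i'\|_{L^2}\le C\|H-H'\|_{W^{1,\infty}}$; (ii) Theorem~\ref{thm:anisotropy} gives $\|\wtA-\wtA'\|_{L^\infty}\le C\|H-H'\|_{W^{1,\infty}}$; and (iii) since the unique $\H$-solution of \eqref{eq:sces2d} is the genuine conductivity solution of \eqref{eq:diffi}, scalar elliptic regularity yields uniform bounds $\|\nabla u_i\|_{L^\infty},\|\nabla u_i'\|_{L^\infty}\le C$ (for $\gamma,\gamma'$ and $\partial X$ smooth enough), while \eqref{eq:positivity} makes $(\det H)^{-\frac12}$, $H^{jl}$, $\nabla H^{jl}$ and $\nabla\log|H|$ uniformly bounded and Lipschitz functions of $H$ in the $W^{1,\infty}$ topology. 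The $L^2$ bound on $\beta-\beta'$ then follows from the algebraic identity by bilinearity of the determinant, writing $\det(\nabla u_1,\nabla u_2)-\det(\nabla u_1',\nabla u_2')=\det(\nabla u_1-\nabla u_1',\nabla u_2)+\det(\nabla u_1',\nabla u_2-\nabla u_2')$ and pairing each gradient difference in $L^2$ with an $L^\infty$ factor via (i) and (iii), the denominator being handled by the Lipschitz dependence of $(\det H)^{-\frac12}$ on $H$.

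Finally, for the gradient part I would subtract the two modified gradient equations,
\begin{align*}
    \nabla(\beta-\beta')=(\beta-\beta')\bP+\beta'(\bP-\bP')+(\bQ-\bQ'),
\end{align*}
and bound each term in $L^2$: the first by $\|\beta-\beta'\|_{L^2}\|\bP\|_{L^\infty}$ using the $L^2$ bound just obtained, the second by $\|\beta'\|_{L^\infty}\|\bP-\bP'\|_{L^2}$, and the third by telescoping the product $(\nabla H^{jl})(\wtA^2)(\nabla u_l)(\nabla u_j)$ over its four factors, each factor-difference being controlled by $\|H-H'\|_{W^{1,\infty}}$ through (i)--(iii) while the remaining factors stay in $L^\infty$. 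Adding the two contributions gives \eqref{eq:stabdetA}. The step I expect to be the main obstacle is (iii): the source $\bQ$ is quadratic in $\nabla u$ and must be paired with the merely $H^1$-stability of Proposition~\ref{prop:sces}, so one genuinely needs a uniform $L^\infty$ bound on the gradients of the solutions of the strongly coupled system, which is where interior and boundary regularity (and hence sufficient smoothness of $\gamma,\gamma'$ and of $\partial X$) must be invoked carefully. It is worth stressing that no integration constant, boundary value, or absorption/Gr\"onwall argument is required, because the algebraic identity already pins $\beta$ down pointwise and supplies the $L^2$ control independently of the gradient equation.
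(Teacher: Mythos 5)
Your proposal is sound and, at its core, uses the same machinery the paper intends (the paper itself only defers to \cite[Theorem 2.8]{MB} for this proof): combine the $\H$-stability of $\bfu$ from Proposition \ref{prop:sces} with the modified gradient equation \eqref{eq:nlda_m2}; indeed your identity $\nabla\beta=\beta\,\bP+\bQ$ is exactly \eqref{eq:nlda_m2} after expanding $\nabla(|H|^{\frac12}H^{pq})$ by the product rule and recognizing the contraction $H^{pq}(\,\cdot\,\wtA^2\nabla u_p)\nabla u_q$ as $|A|^{-1}$ times the identity via \eqref{eq:representation}. Where you genuinely depart from the paper is the $L^2$ part: the paper's route (solving \eqref{eq:nlda_m2} by ODEs along curves or by a Poisson problem) needs an anchoring of the additive constant (a known point value or boundary data), whereas your pointwise algebraic identity $|A|^{-1}=\det(\nabla u_1,\nabla u_2)/(\det H)^{\frac12}$ --- correct, since $(\det H)^{\frac12}=\det(S_1,S_2)=|A|\det(\nabla u_1,\nabla u_2)$, both determinants being positive under \eqref{eq:positivity} and \eqref{eq:cond1} --- pins $\beta$ down with no integration at all. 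That is a tidy improvement: it removes the anchoring/compatibility issue entirely and makes the $L^2$ estimate a purely algebraic consequence of Proposition \ref{prop:sces} and bilinearity of the determinant.

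One step should be repaired, though it is not fatal. Your item (iii) invokes scalar elliptic regularity, ``for $\gamma,\gamma'$ and $\partial X$ smooth enough,'' to obtain $\|\nabla u_i\|_{L^\infty},\|\nabla u_i'\|_{L^\infty}\le C$; these smoothness hypotheses appear neither in Theorem \ref{thm:known_isotropy} nor in Proposition \ref{prop:sces}, so as written you prove a weaker statement than claimed. The detour is unnecessary: since the unique $\H$-solution of \eqref{eq:sces2d} coincides (by Lax--Milgram uniqueness) with the true conductivity solution, the ellipticity condition \eqref{eq:ellipticity} gives the pointwise bound $|\nabla u_i|^2\le\kappa_0\,\gamma\nabla u_i\cdot\nabla u_i=\kappa_0 H_{ii}\le\kappa_0\|H\|_{L^\infty(X)}$, and likewise for the primed quantities, so the uniform gradient bounds you need follow directly from the data with constants depending only on $\kappa_0$ and $\|H\|_{W^{1,\infty}}$. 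With that substitution (and noting that in the paper's setting, where $\wtA$ is known exactly, your term $\wtA-\wtA'$ simply vanishes, while if $\wtA$ is reconstructed you correctly invoke \eqref{eq:stabA}), your argument closes under exactly the stated hypotheses.
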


The rest of the paper is structured as follows.
We first derive equations \eqref{eq:nlda} and \eqref{eq:gradtheta} in section \ref{sec:lemma}, which form the cornerstone of all subsequent derivations. Section \ref{sec:recons} covers the three reconstruction algorithms mentioned above. Section \ref{sec:Ggamma} provides a proof of lemma \ref{lem:Ggamma} while section \ref{sec:numerics} concludes with numerical examples for each reconstruction algorithm.

\section{Proof of equations \ref{eq:nlda} and \eqref{eq:gradtheta}} \label{sec:lemma}

\subsection{Geometrical setting and preliminaries}

In this section, we work on an open connected subset $\Omega\subseteqq X$, over which $(S_1,S_2)$ satisfy the positivity condition \eqref{eq:positivity}. For a matrix $M= PDP^T \in \M_\kappa^s (2)$ with $P\in O(2,\Rm), D = \text{diag }(\lambda_1,\lambda_2)$, and a scalar $r\in\Rm$, we can define uniquely $M^r := P D^r P^T \in \M^s_{\kappa^r}(2)$ by taking the positive $r$-root of each eigenvalue. Now, because $A^r = \gamma^\frac{r}{2}$ is uniformly elliptic for any $r\in\Rm$, the vector fields $(A^rS_1, A^r S_2)$ also form an oriented frame (denoted $A^r S$). The measurements can also be represented as 
\begin{align}
    H_{ij} = A^r S_i\cdot A^{-r} S_j, \quad 1\le i,j\le 2,\quad r\in \Rm.
    \label{eq:measr}
\end{align}
From this assumption, one can deduce that any vector field $V$ can be represented by means of the formula 
\begin{align}
    V = H^{ij} (V\cdot A^r S_i) A^{-r}S_j, \quad r\in\Rm.
    \label{eq:representation}
\end{align}
Both equations \eqref{eq:measr} and \eqref{eq:representation} only ``see'' $A$ up to a scalar multiplicative constant, thus these equations still hold if we replace $A$ by $\wtA = |A|^{-\frac{1}{2}} A$.

Finally, we give the following important relation (only valid when $n=2$) true for any $M \in \M^s(2,\Rm)$:
\begin{align}
    MJM = (\det M)J, \quad J:= \left[
    \begin{array}{cc}
	0 & -1 \\ 1 & 0
    \end{array}
    \right].
    \label{eq:MJM}
\end{align}

\subsection{Proof of the gradient equation \eqref{eq:nlda}} \label{sec:nlda}
The derivation relies on the analysis of the properties of the vector fields $JA^{-1}S_i$ for $i=1,2$. First notice that since $J$ is skew-symmetric, we have
\begin{align*}
    JA^{-1} S_i\cdot A^{-1} S_i = 0, \quad i=1,2,
\end{align*}
Then, using the relation \eqref{eq:MJM} with $M = A^{-1}$ and the fact that $JS_1\cdot S_2 = \det (S_1,S_2) =: |H|^\frac{1}{2}$,
\begin{align*}
    JA^{-1} S_1\cdot A^{-1} S_2 = - JA^{-1} S_2\cdot A^{-1} S_1 = (A^{-1}JA^{-1} S_1)\cdot S_2  = |A|^{-1} JS_1\cdot S_2 = |A|^{-1} |H|^\frac{1}{2}.
\end{align*}
This means that the vector fields $JA^{-1} S_i$ can be expressed using the representation \eqref{eq:representation} with $r=-1$:
\begin{align}
    \begin{split}
	JA^{-1}S_1 = H^{pq} (JA^{-1}S_1\cdot A^{-1} S_p) AS_q = H^{2q} |A|^{-1} |H|^\frac{1}{2} AS_q, \\
	JA^{-1}S_2 = H^{pq} (JA^{-1}S_2\cdot A^{-1} S_p) AS_q = -H^{1q} |A|^{-1} |H|^\frac{1}{2} AS_q.
    \end{split}
    \label{eq:JAS}
\end{align}
We now apply the divergence operator to \eqref{eq:JAS}. Together with the fact that $\nabla\cdot (JA^{-1}S_i) = -(J\nabla)\cdot (A^{-1}S_i) = 0$ and equation \eqref{eq:divSi}, and using the identity $\nabla (fV) = \nabla f\cdot V + f\nabla\cdot V$, we derive
\begin{align*}
    \nabla |A|^{-1}\cdot |H|^\frac{1}{2} H^{qp} AS_p + |A|^{-1} \nabla (|H|^\frac{1}{2} H^{qp}) \cdot AS_p = 0, \quad q=1,2.
\end{align*}
Multiplying the last equation by $A^{-1} S_q$, summing over $q$ and dividing by $|A|^{-1} |H|^{\frac{1}{2}}$, we obtain
\begin{align*}
    H^{qp} (-\nabla\log|A|\cdot AS_p) A^{-1} S_q + |H|^{-\frac{1}{2}} ( \nabla (|H|^\frac{1}{2} H^{qp}) \cdot AS_p) A^{-1}S_q = 0.
\end{align*}
The first term is of the form \eqref{eq:representation} with $r=1$ and $V= - \nabla\log |A|$ and thus equals $-\nabla\log |A|$. We obtain the second term of the r.h.s. of \eqref{eq:nlda}. The first term of the r.h.s. of \eqref{eq:nlda} is obtained from the second one by expanding the term $\nabla (|H|^\frac{1}{2}H^{pq})$ and using the product rule and identity \eqref{eq:representation} to obtain the $\frac{1}{2}\nabla\log |H|$ term.

\subsection{Proof of \eqref{eq:gradtheta}} \label{sec:LieASi}

We now orthonormalize $S$ into a frame $R$ via the transformation $R = S T^T$, also written as 
\begin{align*}
    R_i = t_{ij} S_j, \qquad S_j = t^{ij} R_j, \quad 1\le i\le n.
\end{align*}
The matrix $T$ satisfies $T^T T = H^{-1}$, also written as $t_{ki}t_{kj} = H^{ij}$ for $1\le i,j\le n=2$. It can be constructed by the Gram-Schmidt procedure or by writing $T = H^{-\frac{1}{2}}$. With the $V_{ij}$'s defined in \eqref{eq:Vij}, the following important identity holds 
\begin{align}
    (\nabla H^{ij}) t^{ik} t^{jl} = (\nabla (t_{pi}t_{pj})) t^{ik} t^{jl} = \delta_{pk} (\nabla t_{pj})t^{jl} + \delta_{pl} (\nabla t_{pi})t^{ik} = V_{kl} + V_{lk}, \quad 1\le l,k\le 2. 
    \label{eq:HTT}
\end{align}
Therefore, starting from \eqref{eq:nlda}, we have 
\begin{align}
    \nabla\log |A| &= N + (\nabla H^{jl}\cdot \wtA S_l ) \wtA^{-1} S_j = N + ( (\nabla H^{jl}) t^{lp} t^{jq} \cdot \wtA R_p ) \wtA^{-1} R_q \nonumber\\
    &= N + ((V_{pq} + V_{qp}) \cdot \wtA R_p ) \wtA^{-1} R_q,	
    \label{eq:nldaR}    
\end{align}
where we have used \eqref{eq:HTT} in the last equality. Now, in order to derive equation \eqref{eq:gradtheta}, we must write the Lie bracket $[\wtA R_2,\wtA R_1]$ in two different manners. 

On the one hand, writing $[\wtA R_2, \wtA R_1]$ in the canonical basis $(\bfe_1,\bfe_2)$ and using the identity $[aX,bY] = a (X\cdot\nabla)(b) Y - b (Y\cdot\nabla)(a) X + ab[X,Y]$, we have that 
\begin{align*}
    [\wtA R_2,\wtA R_1] &= [\wtA_{ij}R_2^j \bfe_i, \wtA_{kl}R_1^l \bfe_k] \\
    &= \left( \wtA_{ij}\wtA_{kl} (R_2^j \partial_i R_1^l - R_1^j \partial_i R_2^l) + \wtA_{ij} \partial_i \wtA_{kl} (R_2^j R_1^l - R_1^j R_2^l) \right)\bfe_k,	
\end{align*}
after renumbering indices properly. Moreover, in the parameterization $R(\theta)$ we have 
\begin{align*}
    R_2^j \partial_i R_1^l - R_1^j \partial_i R_2^l = \left\lbrace
    \begin{array}{cc}
	\partial_i\theta & \text{if } j=l \\
	0 & \text{if } j\ne l
    \end{array}
    \right. \qandq R_2^jR_1^l-R_1^jR_2^l = \left\lbrace
    \begin{array}{cc}
	0 & \text{if } j=l \\
	1 & \text{if } (j,l) = (2,1) \\
	-1 & \text{if } (j,l) = (1,2)
    \end{array}
    \right. ,
\end{align*}
thus we obtain 
\begin{align}
    [\wtA R_2,\wtA R_1] = \left( (\wtA_{i1} \wtA_{k1} + \wtA_{i2}\wtA_{k2}) \partial_i\theta + \wtA_{i2}\partial_i \wtA_{k1} - \wtA_{i1} \partial_i \wtA_{k2} \right)\bfe_k = \wtA^2\nabla\theta + [\wtA_2, \wtA_1].
    \label{eq:gradtheta_lhs}
\end{align}

On the other hand, we compute $[\wtA R_2,\wtA R_1]$ using \eqref{eq:divSi}. First, deriving a divergence equation for $\wtA R_i, i=1,2$, and using the fact that \eqref{eq:divSi} can be recast as $\nabla\cdot(\wtA S_i) = -\frac{1}{2}\nabla\log |A|\cdot \wtA S_i$, we have
\begin{align}
    \nabla\cdot(\wtA R_i) &= \nabla\cdot (\wtA t_{ij}S_j) = \nabla t_{ij}\cdot \wtA S_j + t_{ij} \nabla\cdot (\wtA S_j) \nonumber\\
    &= \nabla t_{ij} \cdot \wtA t^{jk} R_k - t_{ij} \frac{1}{2} \nabla\log|A|\cdot \wtA S_j = V_{ik}\cdot \wtA R_k - \frac{1}{2} \nabla\log |A|\cdot \wtA R_i \nonumber\\
    &= -\frac{1}{2} N\cdot \wtA R_i + V_{ik}^a \cdot \wtA R_k, \label{eq:divARi}
\end{align}
where we have used \eqref{eq:nldaR} in the last equality. We now use the following 2d vector calculus identity
\begin{align}
    [U,V] = (U\cdot\nabla) V - (V\cdot\nabla) U = \nabla\cdot (V\otimes U-U\otimes V ) - (\nabla\cdot U) V + (\nabla\cdot V)U,
    \label{eq:vci2d}
\end{align}
where $\nabla\cdot M:= \partial_i M_{ji} \bfe_j$ for $M$ a $2\times 2$ matrix. With $U=\wtA R_2$ and $V= \wtA R_1$, we have 
\begin{align*}
    \wtA R_1\otimes \wtA R_2 - \wtA R_2\otimes \wtA R_1 = \wtA (R_1\otimes R_2 - R_2\otimes R_1)\wtA = - \wtA J\wtA = - (\det\wtA) J = -J,
\end{align*}
so the first term in the r.h.s. of \eqref{eq:vci2d} is zero. Thus we have
\begin{align*}
    [\wtA R_2, \wtA R_1] &= (\nabla\cdot \wtA R_1) \wtA R_2 - (\nabla\cdot\wtA R_2) \wtA R_1 \\
    &= - \frac{1}{2} (N\cdot \wtA R_1) \wtA R_2 + \frac{1}{2} (N\cdot\wtA R_2) \wtA R_1 + (V_{12}^a\cdot\wtA R_2) \wtA R_2 + (V_{12}^a\cdot\wtA R_1)\wtA R_1 \\
    &= \wtA (R_1\otimes R_1+R_2\otimes R_2)\wtA V_{12}^a - \frac{1}{2} \wtA (R_2\otimes R_1 - R_1\otimes R_2)\wtA N \\
    &= \wtA^2 V_{12}^a - \frac{1}{2} JN, 
\end{align*}
where we have used the properties $R_1\otimes R_1+R_2\otimes R_2 = \Imm_2$ and $R_2\otimes R_1 - R_1\otimes R_2 = J$. Combining \eqref{eq:gradtheta_lhs} with the last r.h.s. yields \eqref{eq:gradtheta}. 

\section{Reconstruction procedures} \label{sec:recons}

This section is devoted to the reconstruction procedures. We first reconstruct the anisotropic part of the conductivity tensor $\wtA$ and second reconstruct $(\theta, \log |A|)$ and $(u_1,u_2, |A|^{-1})$. 

\subsection{Reconstruction of the anisotropy $\wtA = \tilde\gamma^{\frac{1}{2}}$ with $m=3$ or $4$} \label{sec:anis}

Let us now consider a quadruple $(g_1,g_2,g_3,g_4)\in \G_\gamma$ with possibly $g_2 = g_3$. Condition \eqref{eq:cond1} ensures that the matrices $S^{(1)} = [S_1^{(1)}|S_2^{(1)}] = [S_1|S_2]$ and $S^{(2)} = [S_1^{(2)}|S_2^{(2)}] = [S_3|S_4]$ satisfy the positivity condition \eqref{eq:positivity}. Let us denote $R^{(i)} = S^{(i)} T^{(i)T}$ the $SO(2,\Rm)$-valued matrix obtained after Gram-Schmidt orthonormalization, parameterized by an angle function $\theta_i$, and denote
\begin{align*}
    H^{(i)} = S^{(i)T} S^{(i)}, \quad N^{(i)} := \frac{1}{2} \nabla\log |H^{(i)}|, \quad V_{12}^{a(i)} := \frac{1}{2} (V_{12}^{(i)} - V_{21}^{(i)}),\quad i=1,2.
\end{align*}
For each pair of solutions, we have the equation
\begin{align}
    \wtA^2\nabla\theta_i + [\wtA_2, \wtA_1] = \wtA^2 V_{12}^{a(i)} - \frac{1}{2} JN^{(i)}, \quad i=1,2.
    \label{eq:LieARi2d3}
\end{align}
Taking the difference of both systems cancels the term $[\wtA_2, \wtA_1]$, and we obtain equation \eqref{eq:algA}, with vector fields 
\begin{align*}
    X_\bfg = (x_1,x_2)^T := \nabla(\theta_2-\theta_1) - V_{12}^{a(2)} + V_{12}^{a(1)} \qandq Y_\bfg = (y_1,y_2)^T := -\frac{1}{2} J(N^{(2)}-N^{(1)}).
\end{align*}
Now we claim that although the angle functions $\theta_1, \theta_2$ are unknown, $\nabla (\theta_2-\theta_1)$ is known from the data. Indeed, we have that
\begin{align*}
    \nabla (\theta_2-\theta_1) = \cos(\theta_2-\theta_1) \nabla (\sin(\theta_2-\theta_1)) - \sin(\theta_2-\theta_1) \nabla (\cos(\theta_2-\theta_1)),
\end{align*}
and then,
\begin{align*}
    \cos(\theta_2-\theta_1) = R_1^{(1)}\cdot R_1^{(2)} = t_{1i}^{(1)} t_{1j}^{(2)} S^{(1)}_i\cdot S^{(2)}_j, \quad \sin(\theta_2-\theta_1) = R_2^{(1)}\cdot R_1^{(2)} = t_{2i}^{(1)} t_{1j}^{(2)} S^{(1)}_i\cdot S^{(2)}_j,
\end{align*}
where both r.h.s. only depend on the data. As a result, the vector fields $X_\bfg, Y_\bfg$ are completely known from the data $\{H_{ij}\}_{i,j=1}^4$.

\paragraph{Parameterization of $\wtA^2$ and inversion:}
The matrix $\wtA^2$ is symmetric and has unit determinant and as such is characterized by only two scalar parameters. This is why equation \eqref{eq:algA} is enough to reconstruct it algebraically wherever $X_\bfg \ne 0$ or $Y_\bfg \ne 0$. We now parameterize $\wtA^2$ as follows
\begin{align}
    \wtA^2 (\xi,\zeta) = \left[
    \begin{array}{cc}
	\xi & \zeta \\ \zeta & \frac{1+\zeta^2}{\xi}
    \end{array}
    \right], \quad \xi > 0,
    \label{eq:paramA2}
\end{align}
where the second row is deduced from the first one by constructing a symmetric matrix with unit determinant. With $X_\bfg = (x_\bfg^1, x_\bfg^2)^T$ and $Y_\bfg = ( y_\bfg^1, y_\bfg^2)^T$, equation \eqref{eq:algA} becomes
\begin{align*}
    \xi x_\bfg^1 + \zeta x_\bfg^2 = y_\bfg^1 \qandq \xi \zeta x_\bfg^1 + (1+\zeta^2)x_\bfg^2 = \xi y_\bfg^2,
\end{align*}
which can be rewritten as 
\begin{align}
    \left[
    \begin{array}{cc}
	x_\bfg^1 & x_\bfg^2 \\ y_\bfg^2 & -y_\bfg^1
    \end{array}
    \right] \left[
    \begin{array}{c}
	\xi \\ \zeta
    \end{array}
    \right] = \left[
    \begin{array}{c}
	y_\bfg^1 \\ x_\bfg^2
    \end{array}
    \right],
    \label{eq:sysxizeta}
\end{align}
and can therefore be inverted as 
\begin{align}
    \xi = (X_\bfg\cdot Y_\bfg)^{-1} \left( (y_\bfg^1)^2+ (x_\bfg^2)^2 \right) \qandq \zeta = (X_\bfg\cdot Y_\bfg)^{-1} \left( y_\bfg^1 y_\bfg^2 - x_\bfg^1 x_\bfg^2  \right). 
    \label{eq:reconsxizeta}
\end{align}

\begin{proof}[Proof of theorem \ref{thm:anisotropy}]
    The only important point is to show that $X_\bfg\cdot Y_\bfg$ is never zero. Since condition \eqref{eq:cond2} is satisfied, then $Y_\bfg$ never vanishes over $X$. Since $\wtA\in \M_{\kappa_0^2}^s$, we have $X_\bfg\cdot Y_\bfg = \|\wtA^{-1} Y_\bfg\|^2 \ge \kappa_0^{-2} \|Y_\bfg\|^2$. Therefore, $X_\bfg\cdot Y_\bfg=0$ wherever $Y_\bfg=0$, that is, nowhere in this case. Inequality \eqref{eq:stabA} follows provided that the inequalities \eqref{eq:XYGS} hold in the Gram-Schmidt case, and the expressions \eqref{eq:reconsxizeta} are smooth in the components of $X_\bfg,Y_\bfg$ away from $\{X_\bfg\cdot Y_\bfg = 0\}$. 
\end{proof}

\begin{remark}[An unstable parameterization of $\wtA$]
    Another way (seemingly geometrically more meaningful) to parameterize $\wtA$ is, for $(a,\alpha) \in (0,\infty)\times \Sone$, to write
    \begin{align}
	\wtA(a,\alpha) = \left[
	\begin{array}{cc}
	    c_\alpha & -s_\alpha \\ s_\alpha & c_\alpha
	\end{array}
	\right] \left[
	\begin{array}{cc}
	    a & 0 \\ 0 & a^{-1}
	\end{array}
	\right] \left[
	\begin{array}{cc}
	    c_\alpha & s_\alpha \\ -s_\alpha & c_\alpha
	\end{array}
	\right], \quad (c_\alpha, s_\alpha) := (\cos\alpha, \sin\alpha).
	\label{eq:paramA1}
    \end{align}
    $a$ describes the anisotropy and $\alpha$ locates the main axes of the ellipse. However, besides the fact that this representation is not injective (indeed we have $\wtA(a,\alpha) = \wtA(a,\alpha+\pi) = \wtA(a^{-1}, \alpha+\pi/2)$), the reconstruction of $\alpha$ becomes unstable as $a$ approaches $1$.
\end{remark}

\subsection{Reconstruction of $|A| = |\gamma|^{\frac{1}{2}}$} \label{sec:deta}

We now consider the problem of reconstructing $|A|$ from $m=n=2$ measurements, assuming that $\wtA$ is known. We assume that the positivity condition is satisfied throughout the domain $X$. We propose two approaches, which we analyse consecutively in the next two sections.  

\subsubsection{Reconstruction of $(\theta,\log |A|)$}
This approach consists in reconstructing $\theta$ and then $|A|$ using the gradient equations \eqref{eq:gradtheta} and \eqref{eq:nlda}. We first isolate $\nabla\theta$ in \eqref{eq:gradtheta} by writing 
\begin{align}
    \nabla\theta = V_{12}^a - \wtA^{-2} \left( \frac{1}{2} JN + [\wtA_2, \wtA_1] \right).
    \label{eq:gradtheta_alone}
\end{align}
We thus require an expression for $[\wtA_2, \wtA_1]$. In the case where $\wtA^2$ was reconstructed in the previous section using the $(\xi,\zeta)$ variables, we need to compute $\wtA$ from knowledge of $\wtA^2$, which we do by first introducing a parameterization of $\wtA$ similar to \eqref{eq:paramA2}, called $\wtA(\lambda,\mu)$ with $\lambda>0$. Then, equating the terms in the first row of both representations $(\wtA(\lambda,\mu))^2$ and $\wtA^2 (\xi,\zeta)$, we obtain the relations
\begin{align*}
    \lambda^2 + \mu^2 = \xi \qandq \frac{\mu}{\lambda}(1+\lambda^2+\mu^2) = \zeta,
\end{align*}
which, using the condition $\lambda>0$ we invert as,
\begin{align}
    \lambda = (1+\xi) \left( \frac{\xi}{\zeta^2 + (1+\xi)^2} \right)^{\frac{1}{2}} \qandq \mu = \zeta \left( \frac{\xi}{\zeta^2 + (1+\xi)^2} \right)^{\frac{1}{2}}. 
    \label{eq:reconslambdamu}
\end{align}
In the variables $(\lambda,\mu)$, we now obtain the following expression for the term $[\wtA_2, \wtA_1]$:
\begin{align}
    [\wtA_2, \wtA_1] = \left[
    \begin{array}{cc}
	\mu & \frac{1+\mu^2}{\lambda} \\ 
	\frac{1+\mu^2}{\lambda} & \frac{\mu(1+\mu^2)}{\lambda^2} 
    \end{array}
    \right] \nabla\lambda - \left[
    \begin{array}{cc}
	\lambda & \mu \\ \mu & \frac{\mu^2-1}{\lambda}
    \end{array}
    \right] \nabla\mu. 
    \label{eq:A2A1}
\end{align}
Using \eqref{eq:A2A1} in \eqref{eq:gradtheta_alone} allows us to reconstruct $\theta$ via integrating the gradient equation \eqref{eq:gradtheta_alone} along curves (and assuming that $\theta$ is known at one point of the domain). 
Alternatively, if $\theta$ is known on the whole boundary, one may apply the divergence operator on both sides of \eqref{eq:gradtheta_alone} and solve a Poisson equation with Dirichlet boundary conditions. 

On to the reconstruction of $|A|$, we now use equation \eqref{eq:nlda} in the $R(\theta)$ frame to obtain:
\begin{align}
    \nabla\log |A| = \frac{1}{2} \nabla\log |H| + 2\sum_{p,q=1}^2 (V_{pq}^s \cdot \wtA R_p) \wtA^{-1} R_q,
    \label{eq:nla2d}
\end{align}
whose r.h.s. is completely known. For its resolution, equation \eqref{eq:nla2d} may be simplified using a calculation similar to \cite[Sec. 6.2]{MB}. To this end, we define $\Phi_{ij} (\theta) = R_i\otimes R_j$ for $1\le i,j\le 2$ and compute
\begin{align*}
    \nabla\log |A| &= \frac{1}{2} \nabla\log |H| + 2 \sum_{p,q=1}^2 \wtA^{-1} \Phi_{pq} \wtA V_{pq}^s \\
    &= - V_{11} - V_{22} + 2 \wtA^{-1} \Phi_{11} \wtA V_{11} + 2 \wtA^{-1} \Phi_{22} \wtA V_{22} + \wtA^{-1} (\Phi_{12} + \Phi_{21}) \wtA (V_{12} + V_{21}) \\
    &= \wtA^{-1} (\Phi_{11}-\Phi_{22}) \wtA (V_{11}-V_{22}) + \wtA^{-1} (\Phi_{12} + \Phi_{21}) \wtA (V_{12} + V_{21}),
\end{align*}
where we have used the facts that $\wtA^{-1} (\Phi_{11} + \Phi_{22})\wtA= \Imm_2$ and $- V_{11} - V_{22} = \nabla\log |H|^{\frac{1}{2}}$. The matrices $\Phi_{11}-\Phi_{22}$ and $\Phi_{12} + \Phi_{21}$ are symmetric matrices that can be expressed in the following manner
\begin{align}
    \begin{split}
	\Phi_{11}-\Phi_{22} &= c\Um + s J\Um \qandq \Phi_{12}+\Phi_{21} = -s\Um+c J\Um, \where \\
	(c,s)&:= (\cos(2\theta),\sin(2\theta)), \quad \Um := \left[
	\begin{array}{cc}
	    1 & 0 \\ 0 & -1
	\end{array}
	\right].
    \end{split}
    \label{eq:reflexions}
\end{align}
From this we deduce the final expression of $\nabla\log |A|$:
\begin{align}
    \begin{split}
	\nabla\log |A| = \wtA^{-1} \,\big( \cos(2\theta) F_c + \sin(2\theta) JF_c \big), \qquad
	F_c := \Um \wtA (V_{11}-V_{22}) + J\Um \wtA (V_{12}+V_{21}).
    \end{split}
    \label{eq:nla2d2}
\end{align}

\begin{proof}[Proof of Theorem \ref{thm:stab1}]
    The proof is very similar to \cite[Theorem 3.2]{BBMT}. For two sets of measurements $H$ and $H'$, the error function on $\theta$ satisfies 
    \begin{align*}
	\nabla (\theta-\theta') = V_{12}^a - V_{12}^{a'} - \frac{1}{2} \wtA^{-2} J (N-N').
    \end{align*}
    Assuming that $\theta(x_0)=\theta'(x_0)$ for some $x_0\in X$ and using Gronwall's lemma along (bounded) paths joining any $x\in X$ to $x_0$, and provided that $\|V_{ij} - V_{ij}'\|_{L^\infty}\le C\|H-H'\|_{W^{1,\infty}}$ in the Gram-Schmidt case, we arrive at the inequality
    \begin{align}
	\|\theta-\theta'\|_{W^{1,\infty}(X)} \le C \|H-H'\|_{W^{1,\infty}(X)}. 
	\label{eq:stabtheta}
    \end{align}
    Similarly, using the difference of equation \eqref{eq:nla2d} for $\log |A|$ and $\log |A|'$ and using \eqref{eq:stabtheta}, we arrive at \eqref{eq:stablogdetA}. 
\end{proof}

\begin{remark}
    As previously pointed out in \cite{BBMT,MB}, solving gradient equations may require the enforcement of compatibility conditions (i.e. the r.h.s must be curl-free), in order to ensure that the computed solution does not depend on the choice of integration curve. 
\end{remark}

\subsubsection{Reconstruction of $(u_1,u_2,|A|^{-1})$}

This approach, first introduced in \cite{MB}, consists in writing a strongly coupled elliptic system for the unknowns $(u_1,u_2)$, whose properties are particularly appealing in two dimensions. We proceed as follows. 

In the frame $(\nabla u_1, \nabla u_2)$, equation \eqref{eq:nlda} reads
\begin{align}
    \nabla\log |A| = |A| |H|^{-\frac{1}{2}} (\nabla ( |H|^{\frac{1}{2}} H^{pq})\cdot \wtA^2 \nabla u_p) \nabla u_q. 
    \label{eq:nlda_u}
\end{align}
Now, the diffusion equation \eqref{eq:diffi} can be rewritten as
\begin{align*}
    \nabla\cdot( \wtA^2 \nabla u_i ) + \nabla\log |A|\cdot \wtA^2 \nabla u_i = 0.
\end{align*}
Plugging \eqref{eq:nlda_u} into the latter equation and using the fact that $|A| \nabla u_q\cdot \wtA^2 \nabla u_i = H_{qi}$ yields the system 
\begin{align}
    \nabla\cdot( \wtA^2 \nabla u_i ) &+ W_{ip}\cdot \wtA^2 \nabla u_p = 0, \quad u_i|_{\partial X} = g_i, \quad i=1,2, \where \label{eq:sces} \\
    W_{ip} &:= H_{qi} |H|^{-\frac{1}{2}} \nabla (|H|^\frac{1}{2} H^{pq}), \quad 1\le i,p\le 2. \label{eq:W}
\end{align}
Upon multiplying \eqref{eq:sces} by $|H|^\frac{1}{2} H^{ji}$ and summing over $j$, we obtain an equivalent formulation in divergence form
\begin{align}
    \nabla\cdot( |H|^\frac{1}{2} H^{ji} \wtA^2 \nabla u_i ) = 0, \quad u_j|_{\partial X} = g_j, \quad j=1,2.
    \label{eq:sces_div}
\end{align}
We assume that $g_1,g_2 \in H^{\frac{1}{2}}(\partial X)$ and define $v_i$ to be a $H^1$-lifting of $g_i$ inside $X$. Defining the new unknown $\bfw = (w_1,w_2) := (u_1-v_1,u_2-v_2)$, $\bfw$ satisfies the following two equivalent systems whenever $\bfu = (u_1,u_2)$ satisfies \eqref{eq:sces} or \eqref{eq:sces_div} and vice-versa
\begin{align}
    \nabla\cdot( \wtA^2 \nabla w_i ) + W_{ip}\cdot \wtA^2 \nabla w_p = h_i := -\nabla\cdot( \wtA^2 \nabla v_i ) + W_{ip}\cdot \wtA^2 \nabla v_p, \quad w_i|_{\partial X} = 0, \quad i=1,2. \label{eq:scesw}    \\
    -\nabla\cdot (|H|^{\frac{1}{2}} \wtA^2 H^{ki} \nabla w_i) = f_k := \nabla\cdot (|H|^{\frac{1}{2}} \wtA^2 H^{ki} \nabla v_i), \quad x\in X,\quad w_k|_{\partial X} = 0,  \quad k=1,2. \label{eq:sces_divw}
\end{align}
System \eqref{eq:sces_divw} allows us to establish existence and uniqueness of $\bfw$ while \eqref{eq:scesw} is used to establish the stability of $\bfw$ with respect to the data $H$.

\paragraph{Uniqueness of $(u_1,u_2)$:}
System \eqref{eq:sces_divw} can be recast as the variational problem of finding $\bfw\in\H:= H_0^1(X)^2$ such that for every $\bfw'\in \H$, we have
\begin{align}
    B(\bfw,\bfw') = \int_X f_k w'_k\ dx, \where\quad  B(\bfw,\bfw') := \int_X |H|^{\frac{1}{2}} H^{ki} (\wtA^2\nabla w_i)\cdot\nabla w'_k\ dx.
    \label{eq:sces_var}
\end{align}
With $\H$ endowed with the norm $\|\bfw\|_\H^2 = \int_X |\nabla w_1|^2 + |\nabla w_2|^2\ dx$, assuming the positivity condition \eqref{eq:positivity}, the matrices $H$, $H^{-1}$ and $\wtA^2$ are all uniformly elliptic over $X$, which guarantees that the bilinear form is coercive. The continuity of $B$ and of the linear form $\bfw'\mapsto \int_X f_k w'_k\ dx$ over $\H$ are straightforward. As a result, Lax-Milgram's theorem establishes existence and uniqueness of $\bfw\in\H$ solving \eqref{eq:sces_divw} and \eqref{eq:scesw}, and thus of $\bfu\in \bfv+\H$ solving \eqref{eq:sces} and \eqref{eq:sces_div}. 

\paragraph{Stability of $(u_1,u_2)$ with respect to the data:}
Let us define the operator $L^{-1}: L^2(X)\ni f\mapsto w$, where $w$ is the unique solution to the problem 
\begin{align}
    Lw := - \nabla\cdot(\wtA^2\nabla w) = f\quad \mbox{ in } X, \quad w|_{\partial X} = 0.
    \label{eq:L}
\end{align}
Since $\wtA^2$ is uniformly elliptic, the operator $L^{-1}:L^2(X) \to H^2(X)$ is bounded (see e.g. \cite{E}), thus compact $L^2(X)\to H_0^1(X)$ by the Rellich compactness theorem. Therefore, applying $L^{-1}$ to \eqref{eq:scesw}, we obtain the integro-differential system
\begin{align*}
    w_i + L^{-1} (-W_{ip}\cdot \wtA^2 \nabla w_p) = - L^{-1} h_i, \quad i=1,2,
\end{align*}
which in vector notation may be recast as
\begin{align}
    (I + \bfP_W) \bfw = \{L^{-1} f_i\}_{i=1}^2, \where\quad \bfP_W \bfw = \left\{ L^{-1} (-W_{ip}\cdot \wtA^2 \nabla w_p) \right\}_{i=1}^2.
    \label{eq:sces_PW}
\end{align}
Similarly to \cite[Lemma 5.1]{MB}, the operator $\bfP_W:\H\to\H$ is compact and its operator norm satisfies 
\begin{align}
    \|\bfP_W\| \le C \|W\|_\infty, \quad \|W\|_\infty = \max_{1\le i,j\le 2} \|W_{ij}\|_{L^\infty(X)},
    \label{eq:PWnorm}
\end{align}
see \cite{MB} for details. Therefore, equation \eqref{eq:sces_PW} is a Fredholm equation and the boundedness of $(I+\bfP_W)^{-1}$ holds as soon as $-1$ is not an eigenvalue of $\bfP_W$. This is the case here, since the previous paragraph proves exactly this fact. On to the stability of $\bfu$ w.r.t. the data $H$, we use the fact that the vector fields $W$ defined in \eqref{eq:W} satisfy estimates of the form 
\begin{align*}
    \| W-W' \|_{L^\infty(X)} \le C \|H-H\|_{W^{1,\infty}(X)},
\end{align*}
whenever $H$ and $H'$ have their components in $W^{1,\infty}(X)$ and satisfy the positivity condition \eqref{eq:positivity}. With the previous estimate, the proof of proposition \ref{prop:sces} is similar to \cite[Proposition 2.6]{MB} so we do not reproduce it here. 

\paragraph{Reconstruction of $|A|^{-1}$:}

On to the reconstruction of $|A|$, equation \eqref{eq:nlda_u} can be recast as 
\begin{align}
    \nabla |A|^{-1} = - |H|^{-\frac{1}{2}} (\nabla ( |H|^\frac{1}{2} H^{pq})\cdot \wtA^2 \nabla u_p) \nabla u_q,
    \label{eq:nlda_m2}
\end{align}
the r.h.s. of which is now completely known. One may thus choose to solve this equation either solving ODE's along curves or taking divergence on both sides and solving a Poisson equation. The stability of such a reconstruction scheme was already stated in theorem \ref{thm:known_isotropy}, whose proof may be found in the very similar (isotropic) context of \cite[Theorem 2.8]{MB}. 

\section{Proof of lemma \ref{lem:Ggamma}} \label{sec:Ggamma}

\paragraph{Isotropic case $\gamma \equiv \sigma\Imm_2$: }

Consider the problem $\nabla\cdot\sigma(x)\nabla u=0$ on $\Rm^2$ with $\sigma(x)$ extended in a continuous manner outside of $X$ and such that $\sigma$ equals $1$ outside of a large ball. Let $q(x)=-\frac{\Delta\sqrt\sigma}{\sigma}$ on $\Rm^2$. We assume that $q\in H^{3+\eps}(\Rm^2)$, which holds if $\sigma-1\in H^{5+\eps}(\Rm^2)$ for some $\eps>0$, i.e., the original $\sigma_{|X}\in H^{5+\eps}(X)$. Note that by Sobolev imbedding, $\sigma$ is of class $\C^4(\overline X)$ while $q$ is of class $\C^2(\overline X)$.

Let $v=\sqrt\sigma u$ so that $\Delta v+qv=0$ on $\Rm^2$. Let $\bm\rho\in\Cm^2$ be of the form $\bm\rho = \rho(\bfk+i\bfk^\perp)$ with $\bfk \in\Sm^1$ and $\bfk^\perp = J\bfk$ so that $\bfk\cdot\bfk^\perp=0$, and $\rho = |\bm\rho|/\sqrt{2}>0$. Thus, $\bm\rho$ satisfies $\bm\rho\cdot\bm\rho=0$ and $e^{\bm\rho\cdot x}$ is a harmonic complex plane wave. Now, it is shown in \cite{BU-IP-10}, following works in \cite{calderon80,Syl-Uhl-87}, that 
\begin{align} 
    v_{\bm\rho} = \sqrt\sigma u_{\bm\rho} = e^{\bm\rho\cdot x}(1+\psi_{\bm\rho}),
\end{align} 
with $(\Delta + q)v_{\bm\rho}=0$ and hence $\nabla\cdot \sigma\nabla u_{\bm\rho}=0$ in $\Rm^2$. Furthermore, with the assumed regularity, \cite[Proposition 3.3]{BU-IP-10} shows the existence of a constant $C$ such that    \begin{align}
    \rho \|\psi_{\bm\rho}\|_{\C^2(\overline X)} \le C \|q\|_{H^{3+\eps}}(X), \quad\text{ and so}\quad \lim_{\rho\to\infty} \|\psi_{\bm\rho}\|_{\C^2(\overline X)} =0.
    \label{eq:limpsirho}
\end{align} 
Taking gradients of the previous equation and rearranging terms, we obtain that   
\begin{align}
    \sqrt{\sigma}\nabla u_{\bm\rho} = e^{\bm\rho\cdot x} (\bm\rho + \bm\varphi_{\bm\rho}), \quad\text{with}\quad \bm\varphi_{\bm\rho} := \nabla\psi_{\bm\rho} + \psi_{\bm\rho} \bm\rho - (1+\psi_{\bm\rho}) \nabla\sqrt{\sigma}. 
    \label{eq:phirho}
\end{align}
Note that $u_{\bm\rho}$ is complex-valued and since $\sigma$ is real-valued, both the real and imaginary parts $u_{\bm\rho}^\Re$ and $u_{\bm\rho}^\Im$ are solutions of $\nabla\cdot(\sigma \nabla u) = 0$. More precisely, we have 
\begin{align}
    \begin{split}
	\sqrt\sigma\nabla u_{\bm\rho}^\Re  &= \rho e^{\rho\bfk\cdot x} \left( (\bfk+ \rho^{-1}\bm\varphi_{\bm\rho}^\Re )\cos(\rho\bfk^\perp\cdot x) - (\bfk^\perp+ \rho^{-1}\bm\varphi_{\bm\rho}^\Im ) \sin (\rho\bfk^\perp\cdot x) \right),  \\
	\sqrt\sigma\nabla u_{\bm\rho}^\Im  &= \rho e^{\rho\bfk\cdot x} \left( (\bfk^\perp + \rho^{-1}\bm\varphi_{\bm\rho}^\Im) \cos(\rho\bfk^\perp\cdot x) + (\bfk + \rho^{-1} \bm\varphi_{\bm\rho}^\Re) \sin (\rho\bfk^\perp\cdot x) \right). 
    \end{split}
    \label{eq:urho}
\end{align}
We now denote $d_{\bm\rho} := \det(\sqrt{\sigma}\nabla u_{\bm\rho}^\Re, \sqrt\sigma\nabla u_{\bm\rho}^\Im)$, and straightforward computations lead to
\begin{align*}
    d_{\bm\rho} = \rho^2 e^{2\rho\bfk\cdot x} (1 + f_{\bm\rho}), \quad f_{\bm\rho} := \rho^{-1} \left( \bfk^\perp \cdot\bm\varphi_{\bm\rho}^\Im + \bfk\cdot\bm\varphi_{\bm\rho}^\Re \right) + \rho^{-2} J\bm\varphi_{\bm\rho}^\Re\cdot \bm\varphi_{\bm\rho}^\Im,
\end{align*}
where $\lim_{\rho\to\infty} \sup_{\overline X} |f_{\bm\rho}| = 0$. Letting $\rho$ so large that $\sup_{\overline X} |f_{\bm\rho}| \le \frac{1}{2}$, the function $d_{\bm\rho}$ is now bounded away from zero over $\overline X$. Taking logarithm and gradient, we now obtain 
\begin{align}
    \nabla\log d_{\bm\rho} = \rho \left(2\bfk + \rho^{-1} \frac{\nabla f_{\bm\rho}}{1+f_{\bm\rho}} \right).
    \label{eq:nldrho}
\end{align}

We now define $\bfk_1, \bfk_2 \in \Sm^1$ such that $\bfk_1\ne \bfk_2$ and define, for $j=1,2$, $\bm\rho_j := \rho (\bfk_j + i \bfk_j^\perp)$. Considering the solutions $(u_{\bm\rho_1}^\Re, u_{\bm\rho_1}^\Im, u_{\bm\rho_2}^\Re, u_{\bm\rho_2}^\Im)$, the previous calculations show that, for $\rho$ large enough, we have
\begin{align*}
    \inf_{\overline X} \min (d_{\bm\rho_1}, d_{\bm\rho_2}) \ge c_0 >0,
\end{align*}
which means that condition \eqref{eq:cond1} is satisfied. Furthermore, using \eqref{eq:nldrho}, we have
\begin{align*}
    \nabla\log \frac{d_{\bm\rho_1}}{d_{\bm\rho_2}} = \rho \left( 2(\bfk_1 - \bfk_2) +  \bfr \right), \quad \bfr := \rho^{-1} \left(\frac{\nabla f_{\bm\rho_1}}{1+f_{\bm\rho_1}} - \frac{\nabla f_{\bm\rho_2}}{1+f_{\bm\rho_2}} \right),
\end{align*}
where the remainder $\bfr$ may be made negligible by virtue of \eqref{eq:limpsirho} and the smoothness assumption on $\sigma$. For $\rho$ such that $\sup_{\overline X} \|\bfr\| \le \|\bfk_1-\bfk_2\|$, the quantity $\nabla\log (d_{\bm\rho_1}/d_{\bm\rho_2})$ never vanishes over $X$ and thus condition \eqref{eq:cond2} is satisfied. In this case, let $\bfg_\rho = \{g_i\}_{1\le i\le 4}$ be the traces of the above CGO solutions $(u_{\bm\rho_1}^\Re, u_{\bm\rho_1}^\Im, u_{\bm\rho_2}^\Re, u_{\bm\rho_1}^\Im)$. These illuminations generate solutions that fulfill conditions \eqref{eq:cond1} and \eqref{eq:cond2}, so $\bfg_\rho \in \G_{\sigma\Imm_2}$. By continuity, any $\bfg$ in an open set (of sufficiently smooth boundary conditions) in the vicinity of  $\bfg_\rho$ also belongs to $\G_{\sigma\Imm_2}$ and the isotropic case is proved. 

\paragraph{General case:}

Since $\tilde\gamma = |\gamma|^{-\frac{1}{2}}\gamma$ is a $\kappa^2$-conformal structure on $\Cm$, \cite[Theorem 10.2.2]{AIM} implies that there exists a unique diffeomorphism $\phi:\Cm\ni z \mapsto \phi(z)=z' \in \Cm$ satisfying the {\em Beltrami system} (normalized at infinity)
\begin{align*}
    D^t \phi\ \tilde\gamma\circ\phi\ D\phi = J_\phi \Imm_2, \quad J_\phi := \det (D\phi), \quad z \in \Cm, \quad \phi(z) \stackrel{z\to\infty}{=} z + \O( z^{-1}),
\end{align*}
alternatively formulated as the following {\em complex Beltrami equation}
\begin{align*}
    \frac{\partial\phi}{\partial\bar z} = \nu(\phi(z)) \overline{ \frac{\partial\phi}{\partial z} }, \where\quad \nu = \frac{\tilde\gamma_{22}- \tilde\gamma_{11} - 2 i \tilde\gamma_{12}}{2 + \tilde\gamma_{11} + \tilde\gamma_{22}}, \quad z\in \Cm. 
\end{align*}
$\nu$ defined above coincides with \eqref{eq:nu_lem} and thus belongs to $\C_{loc}^4$ by assumption. By virtue of \cite[Theorem 15.0.7]{AIM}, this implies that $\phi$ is a $\C_{loc}^5$-diffeomorphism. We further have $J_\phi \ge c_1 >0$ throughout $\Cm$. Using this change of variables and denoting $\nabla\equiv\nabla_{x,y}$ and $\nabla'\equiv\nabla_{x',y'}$ in the sequel with $x'+iy'=z'$ and $x+iy=z$, it is well-known that a function $u$ solves
\begin{align}
    \nabla\cdot (\gamma\nabla u) = 0, \quad z \in \Cm,
    \label{eq:ellz}
\end{align}
if and only if the function $v = u\circ\phi^{-1}$ solves
\begin{align}
    \nabla' \cdot (\phi_\star\gamma \nabla' v) &= 0, \where\quad \phi_\star\gamma (z') = \frac{1}{J_\phi(z)} D\phi^t(z)\ \gamma(\phi(z))\ D\phi(z) \Big|_{z=\phi^{-1}(z')} = \sigma(z') \Imm_2, 
    \label{eq:ellzp}
\end{align}
with $\sigma(z') := |\gamma|^{\frac{1}{2}}\circ\phi^{-1} (z')$. Using the fact that $|\gamma|^{\frac{1}{2}} \in H^{5+\varepsilon}(X)$ by assumption and $\phi\in\C_{loc}^5$, the change of variable for Sobolev spaces implies that $\sigma\in H^{5+\varepsilon} (\phi(X))$. Thus by virtue of the first part of the proof, $\G_{\sigma\Imm_2}\ne\emptyset$. 

Let $\bfg\in \G_{\sigma\Imm_2}$ defined on the boundary $\phi (\partial X) = \partial(\phi(X))$. Then it is easy to see that the illumination $\bfg\circ\phi\in\G_\gamma$, so that $\G_\gamma\ne\emptyset$. Indeed, if for $1\le i\le 4$, $v_i$ designates the unique solution to \eqref{eq:ellzp} over $\phi(X)$ with boundary condition $v_i|_{\partial(\phi(X))} = g_i$, then the function $u_i := v_i\circ\phi$ satisfies \eqref{eq:ellz} over $X$ with boundary condition $u_i|_{\partial X} = g_i\circ\phi$. Using the chain rule $\nabla u = \nabla (v\circ\phi) = D\phi (\nabla' v)\circ\phi$ and properties of the determinant, routine computations yield the following relations, true for every $z = x+iy\in X$
\begin{align*}
    \det(\nabla u_i, \nabla u_j) (z) &= J_\phi (z) \det (\nabla' v_i, \nabla' v_j) (\phi(z)), \quad (i,j)\in \{(1,2), (3,4)\} \\
    JY_{\bfg\circ\phi} (z) &= D\phi(z) JY_\bfg (\phi(z)),
\end{align*}
with $Y_\bfg$ defined in \eqref{eq:cond2}. Since $D\phi$ is everywhere invertible with $J_\phi\ge c_0>0$, the previous relations imply that $(u_1,u_2,u_3,u_4)$ satisfy conditions \eqref{eq:cond1}-\eqref{eq:cond2} over $X$ if and only if $(v_1,v_2,v_3,v_4)$ satisfy \eqref{eq:cond1}-\eqref{eq:cond2} over $\phi(X)$, that is, $\bfg\in\G_{\sigma\Imm_2}$ if and only if $\bfg\circ\phi\in\G_\gamma$. The lemma is proved. 

\begin{remark}[On the regularity of $\sigma$] The existence of CGO solutions can be established in two dimensions assuming mere boundedness of the isotropic diffusion coefficient $\sigma$, as was established in \cite{AP}. However, due to the necessity of estimate \eqref{eq:limpsirho} for our purposes, we need the results in \cite{BU-IP-10}, which in turn require higher regularity on $\sigma$.
\end{remark}

\section{Numerical examples}\label{sec:numerics}
In order to validate the reconstruction algorithms presented in the previous sections, we implemented a forward solver for the anisotropic diffusion equation on a two-dimensional square grid, using a finite difference method implemented with the software MatLab. 

We use a {\tt N+1}$\times${\tt N+1} square grid with {\tt N=128}, the tensor product of the equi-spaced subdivision {\tt x = -1:h:1} with {\tt h = 2/N}. Partial derivatives are performed using the operators {\tt DX = kron(D,I)} for $\partial/\partial x$ and {\tt DY = kron(I,D)} for $\partial/\partial y$, where {\tt D} designates the one-dimensional finite difference derivative matrix and {\tt I=speye(N+1)} is the {\tt N+1}$\times${\tt N+1} (sparsified) identity matrix. {\tt D} is the second-order centered stencil {\tt [-1 0 1]/(2h)} with, at the boundary {\tt D(1,1:3) = [-3 4 -1]/(2h)} and {\tt D(N+1,N-1:N+1) = [1 -4 3]/(2h)}. 

In the following examples, the conductivity tensor is described by the triplet of scalar functions $(|\gamma|^{\frac{1}{2}}, \xi, \zeta)$ such that $\gamma = |\gamma|^{\frac{1}{2}} \tilde\gamma (\xi,\zeta)$ with $\tilde \gamma$ given in \eqref{eq:paramA2}. Note that $|A|=|\gamma|^{\frac{1}{2}}$. The anisotropy coefficients $(\xi,\zeta)$ in Fig. \ref{fig:coeffs}\subref{fig:coeffs1}\&\subref{fig:coeffs2} are used in all experiments, while the determinant $|\gamma|^{\frac{1}{2}}$ may be either one of the two functions displayed in Figs. \ref{fig:coeffs}\subref{fig:coeffs3}\&\subref{fig:coeffs4}.

\begin{figure}[htpb]
    \centering 
    \subfigure[$\xi$]{
    \includegraphics[width=0.22\textwidth]{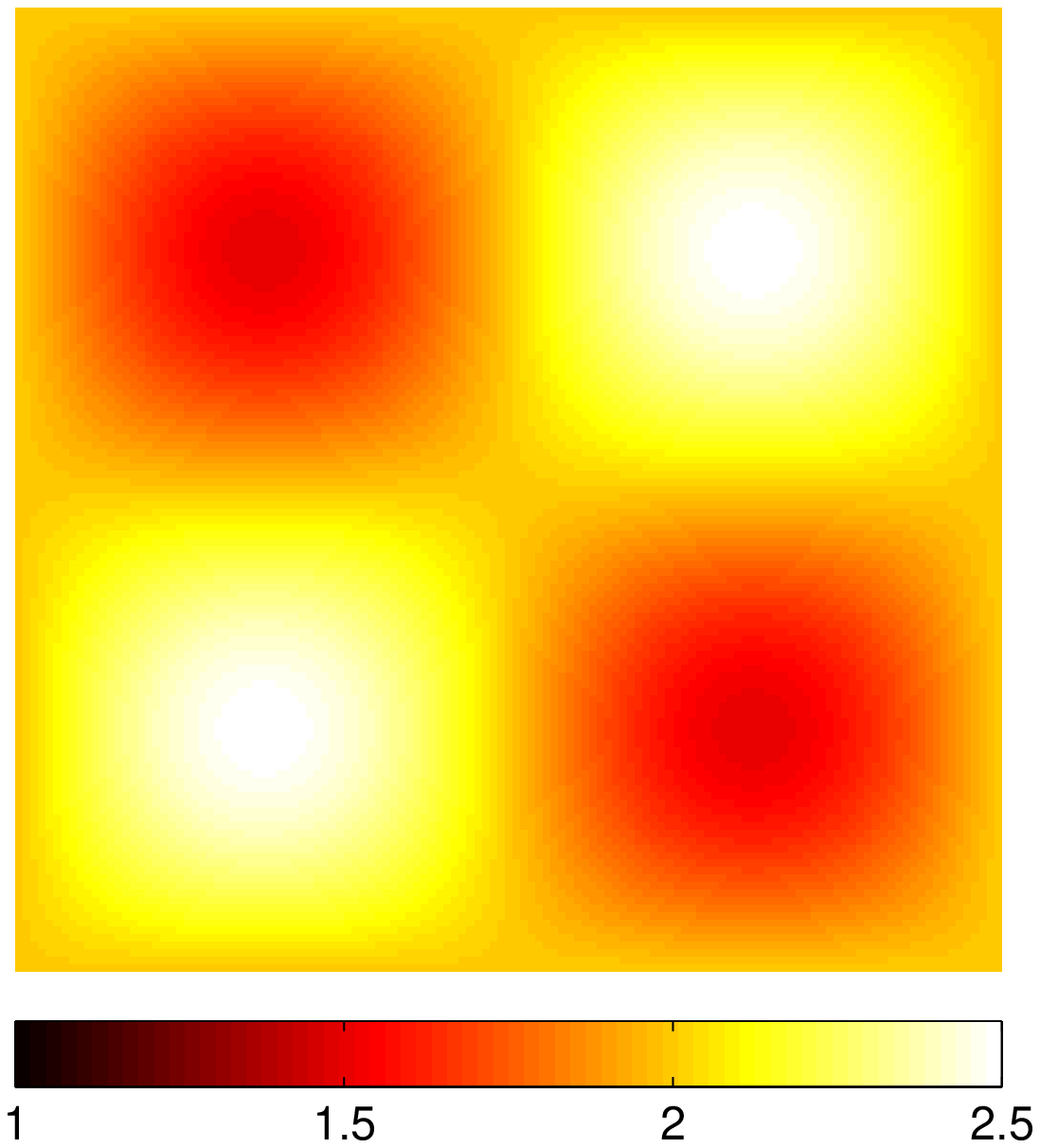}
    \label{fig:coeffs1}
    }
    \subfigure[$\zeta$]{
    \includegraphics[width=0.22\textwidth]{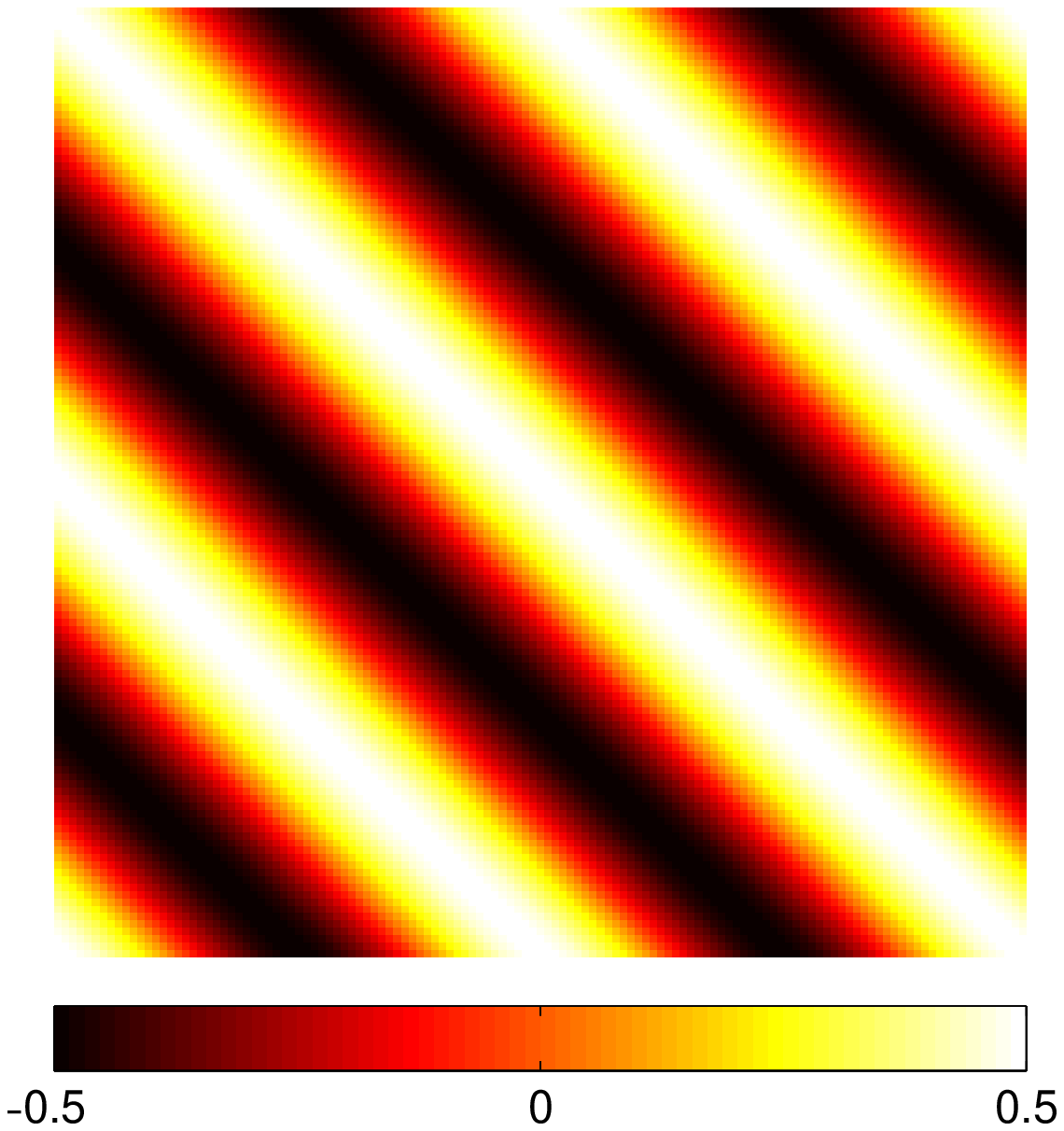}
    \label{fig:coeffs2}
    }
    \subfigure[$|\gamma|^{\frac{1}{2}}$ smooth]{
    \includegraphics[width=0.22\textwidth]{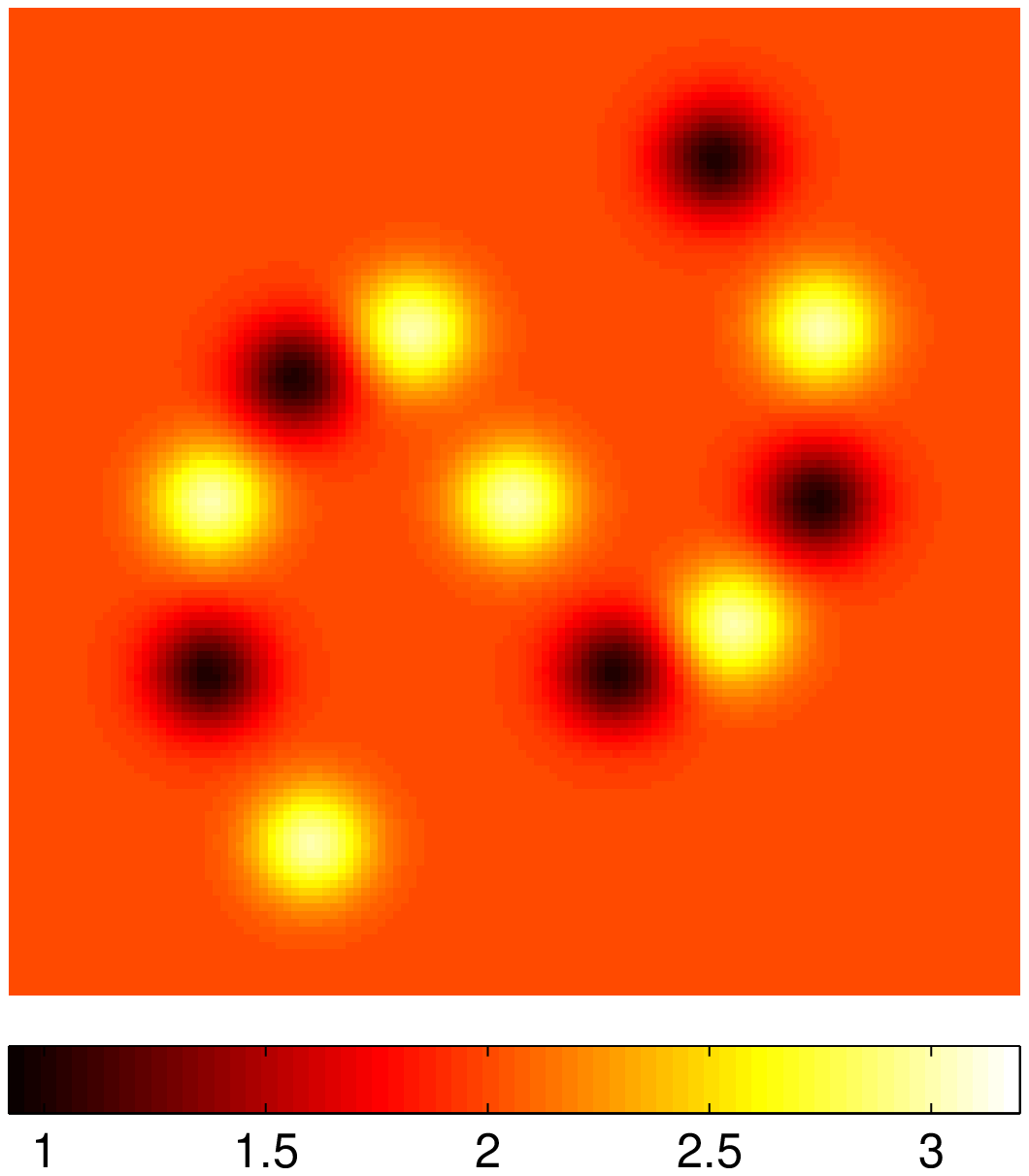}
    \label{fig:coeffs3}
    }
    \subfigure[$|\gamma|^{\frac{1}{2}}$ rough]{
    \includegraphics[width=0.22\textwidth]{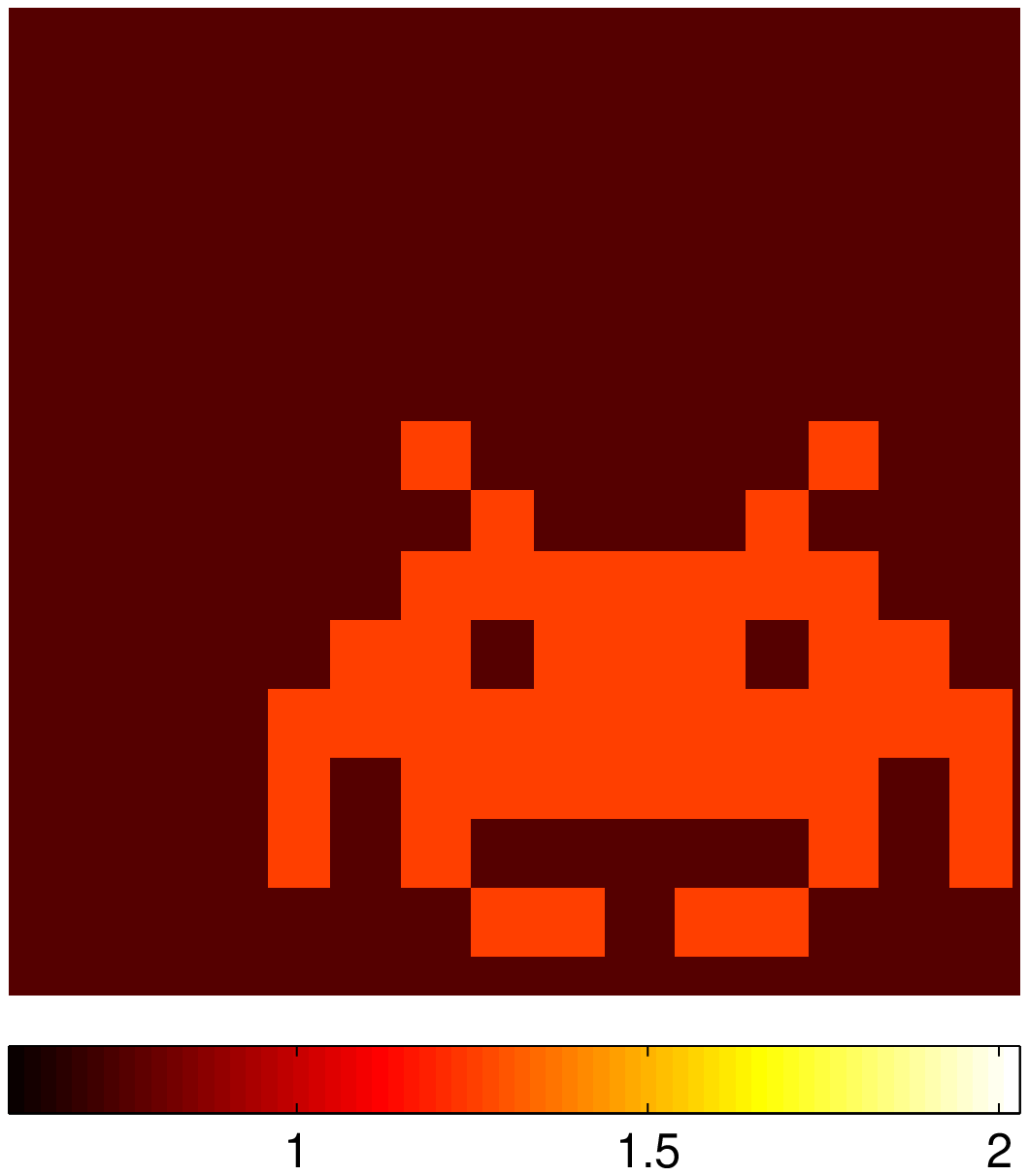}
    \label{fig:coeffs4}
    }
    \caption{The coefficients used in the numerical simulations}
    \label{fig:coeffs}
\end{figure}

We sometimes perturb the internal functionals $H_{ij}$ with random noise of the form 
\begin{align}
    H_{noisy} = H \text{{\tt .* }} (1 + \frac{\alpha}{100} \text{ {\tt random}}),
    \label{eq:noise}
\end{align}
where $\alpha$ is the noise level and {\tt random} is a {\tt N+1}$\times${\tt N+1} matrix of random entries with uniform density over $[-1,1]$, to which we have applied a slight regularization by convolving it with the averaging filter {\tt ones(3)/9} (e.g. using the MatLab {\tt imfilter} function). 

\paragraph{Reconstruction of the anisotropy $\wtA$ from noiseless data and $m=3$ solutions.}

Let the two systems $S^{(1)}=(S_1,S_2)$ and $S^{(2)}=(S_2,S_3)$ be associated with $(g_1,g_2)$ and $(g_2,g_3)$ respectively, where $(g_1,g_2,g_3)$ are given by
\begin{align*}
    (g_1,g_2,g_3)(x,y) = (x+y, y + 0.1 y^2, -x+y) , \quad (x,y) \in \partial [-1,1]^2.
\end{align*}
We first compute the data $\{H_{ij}\}$ by solving the three forward problems \eqref{eq:diffi} and then computing $H_{ij} = \gamma\nabla u_i\cdot\nabla u_j$ over the grid. We then compute the vector fields $X$ and $Y$ in the Gram-Schmidt case, where the transfer matrices $T^{(l)} = \{t_{ij}^{(l)}\}_{i,j=1}^2$ such that $R^{(l)} = S^{(l)}T^{(l)T}$ for $l=1,2$ are given by
\begin{align*}
    T^{(1)} = \left[
    \begin{array}{cc}
	H_{11}^{-\frac{1}{2}} & 0 \\
	-H_{12} H_{11}^{-\frac{1}{2}} d_1^{-1} & H_{11}^{\frac{1}{2}} d_1^{-1}
    \end{array}
    \right] \qandq 
    T^{(2)} = \left[
    \begin{array}{cc}
	H_{22}^{-\frac{1}{2}} & 0 \\
	-H_{23} H_{22}^{-\frac{1}{2}} d_2^{-1} & H_{22}^{\frac{1}{2}} d_2^{-1}
    \end{array}
    \right],
\end{align*}
with $d_1 := (H_{11}H_{22}-H_{12}^2)^\frac{1}{2}$ and $d_2 = (H_{22}H_{33}-H_{23}^2)^\frac{1}{2}$. $X_\bfg$ and $Y_\bfg$ then admit the following expressions:
\begin{align}
    X_\bfg = -\frac{H_{22}}{2} \left( \frac{1}{d_1}\nabla \left( \frac{H_{12}}{H_{22}}  \right) + \frac{1}{d_2} \nabla\left( \frac{H_{23}}{H_{22}} \right) \right) \qandq Y_\bfg = -\frac{1}{2} J \nabla (\log d_2 - \log d_1).
    \label{eq:XY}
\end{align}
Once $X_\bfg, Y_\bfg$ are generated numerically, we then reconstruct the functions $(\xi,\zeta)$ that characterize $\tilde\gamma =\wtA^2$ via formulas \eqref{eq:reconsxizeta}. 

The simulation of $\log d_1-\log d_2$ that appears in \eqref{eq:XY} is presented for the smooth $\gamma$ in the absence of noise on Fig. \ref{fig:2noise}\subref{fig:2noise1} and in the presence of one percent of noise on Fig. \ref{fig:2noise}\subref{fig:2noise2}.

The reconstruction of $(\xi,\zeta)$ is performed for both forms of $|\gamma|^\frac{1}{2}$ in Figs.\ref{fig:coeffs}\subref{fig:coeffs3}\&\subref{fig:coeffs4} and the results are presented in Fig.\ref{fig:2}. In the smooth case, the reconstructed $\xi,\zeta$ cannot be distinguished visually from the exact coefficients and are thus not represented. Relative $L^2$ ($L^\infty$) errors for $\xi$ and $\zeta$ are $0.1\%$ ($8.6\%$) and $0.8\%$ ($13.7\%$), respectively. In the second case, the singularities of $|\gamma|^\frac{1}{2}$ create artifacts on the reconstructions, see Fig. \ref{fig:2}\subref{fig:22}\&\subref{fig:26}, and the relative errors for $\xi$ and $\zeta$ increase to $5.4\%$ ($99\%$) and $15.8\%$ ($167\%$), respectively. 
\begin{figure}[ht]
    \centering 
    \subfigure[true $\xi$]{
    \includegraphics[width=0.22\textwidth]{xi}
    \label{fig:21}
    }
    \subfigure[$\xi$ (``1'' on \subref{fig:24})]{
    \includegraphics[width=0.22\textwidth]{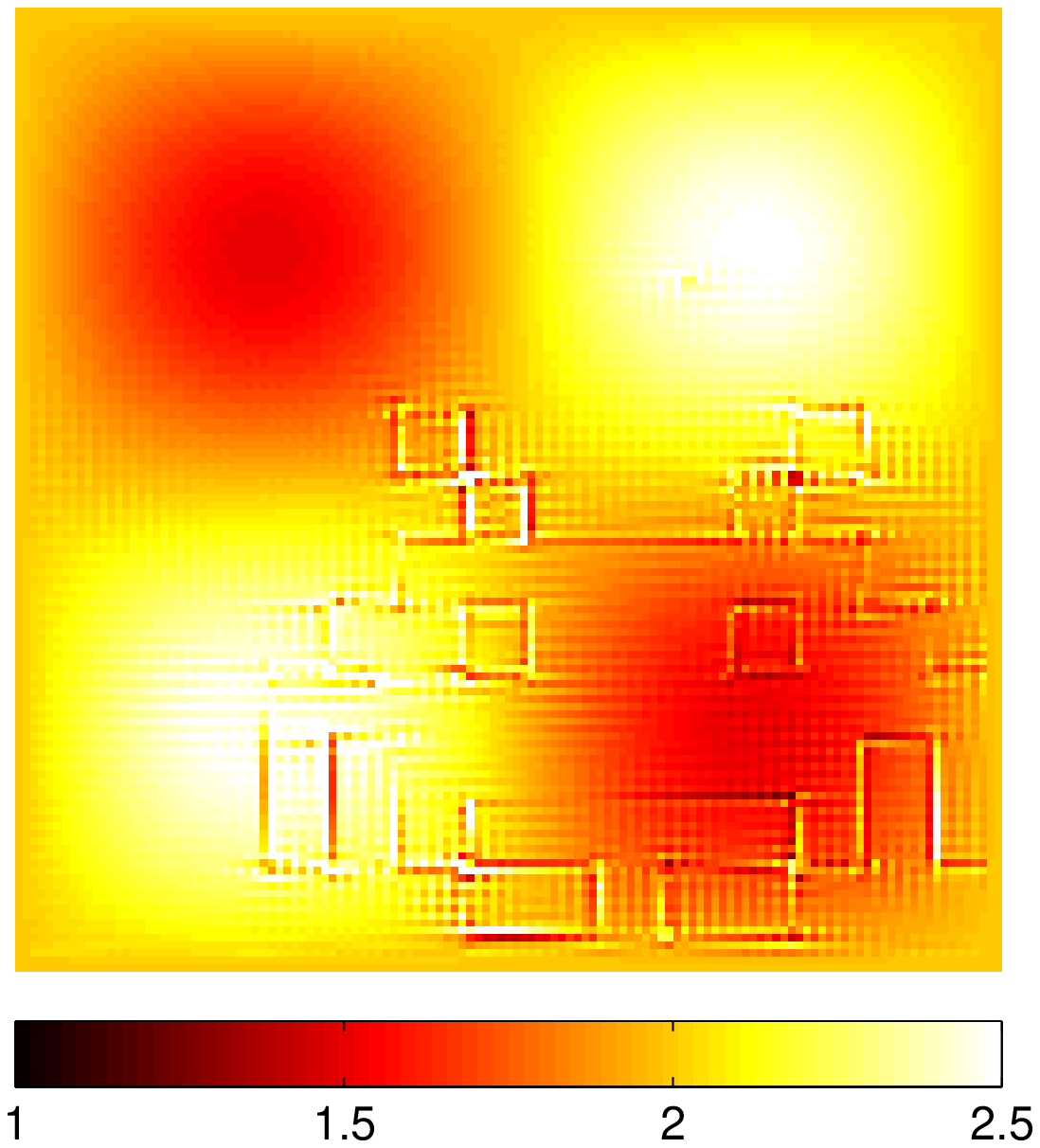}
    \label{fig:22}
    }
    \subfigure[$\xi$ (``2'' on \subref{fig:24})]{
    \includegraphics[width=0.22\textwidth]{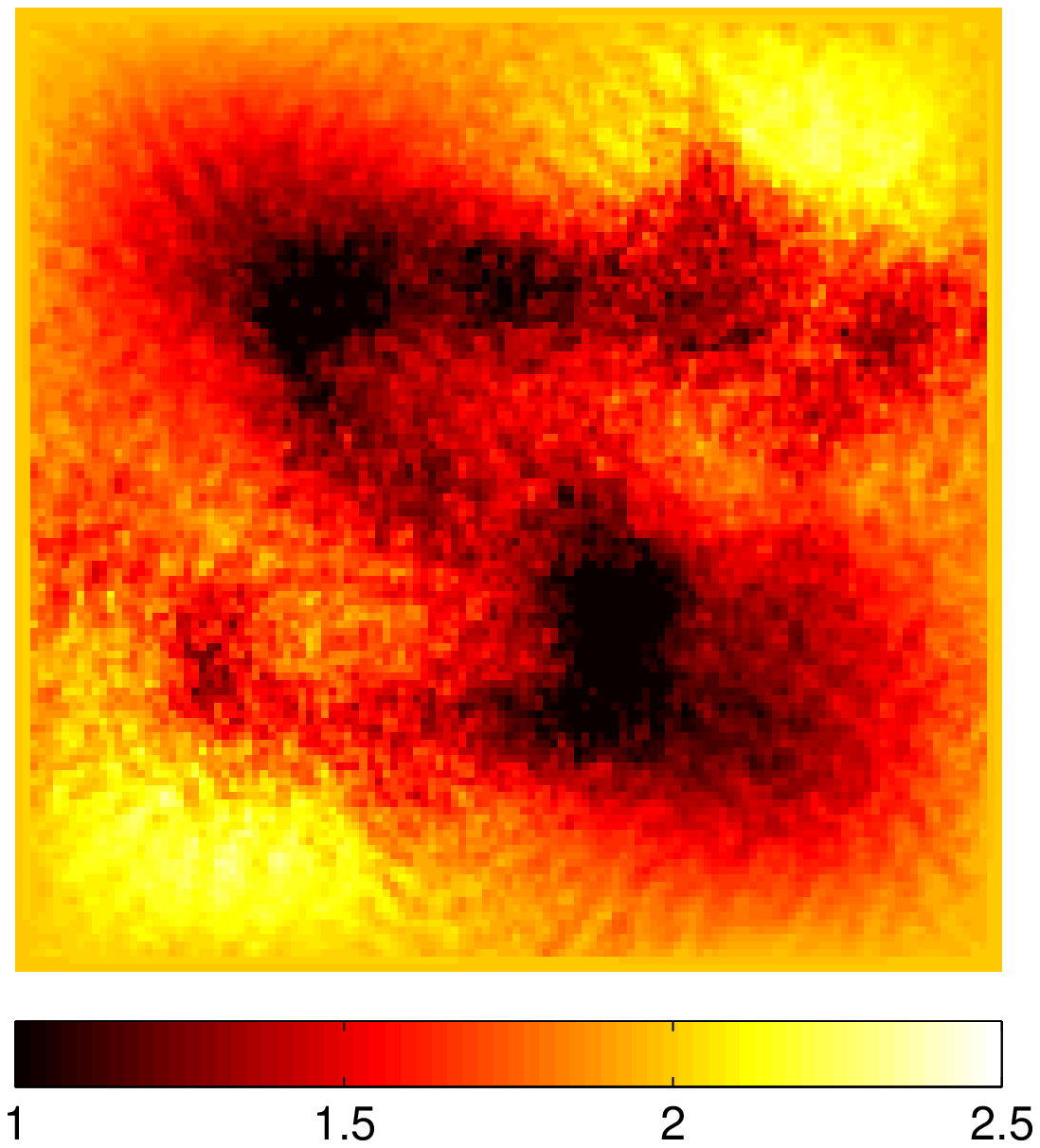}
    \label{fig:23}
    }
    \subfigure[$\xi$ at $\{x=0.5\}$]{
    \includegraphics[width=0.22\textwidth]{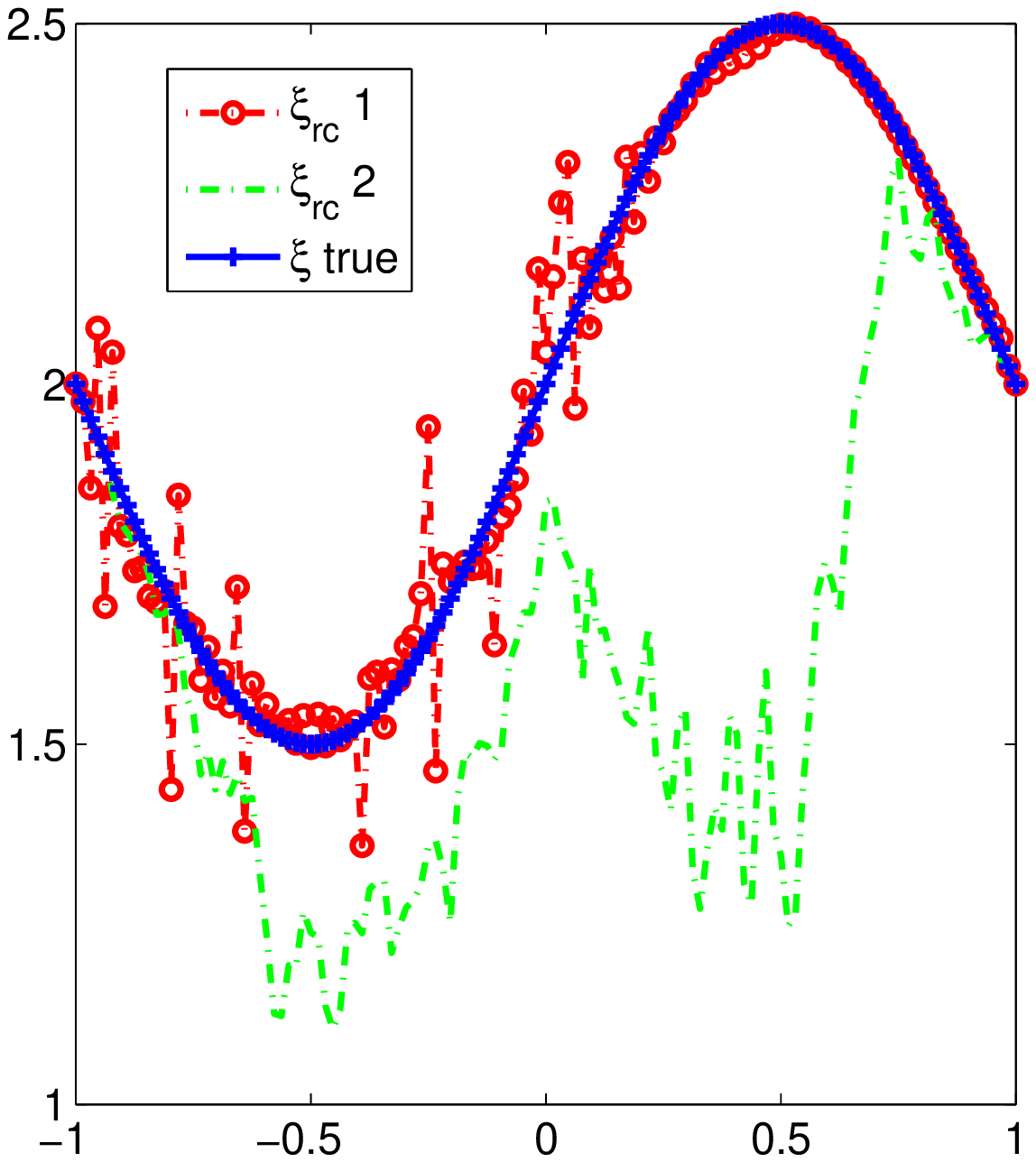}
    \label{fig:24}
    }    
    \subfigure[true $\zeta$]{
    \includegraphics[width=0.22\textwidth]{zeta}
    \label{fig:25}
    }
    \subfigure[$\zeta$ (``1'' on \subref{fig:28})]{
    \includegraphics[width=0.22\textwidth]{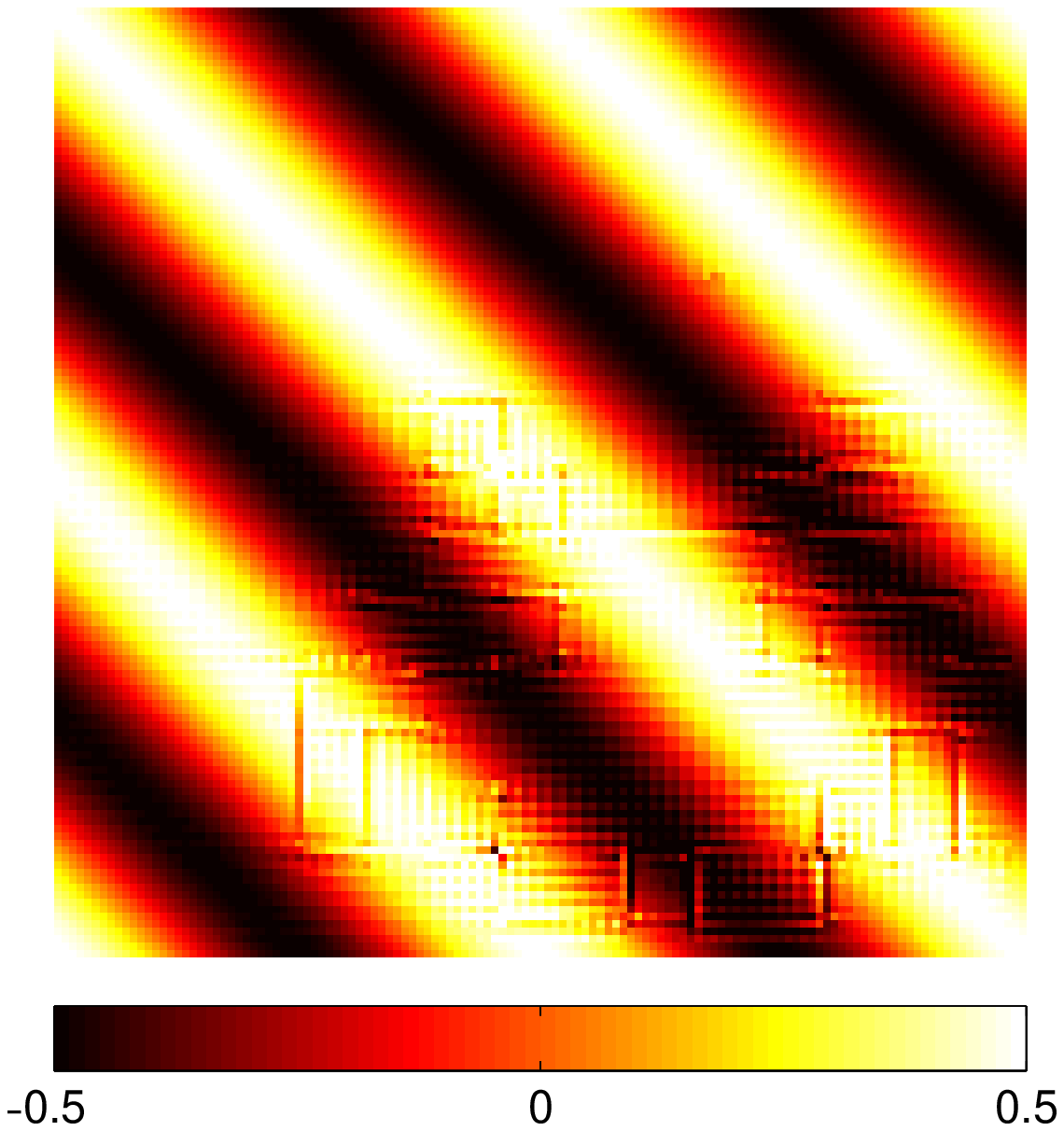} 
    \label{fig:26}
    }
    \subfigure[$\zeta$ (``2'' on \subref{fig:28})]{
    \includegraphics[width=0.22\textwidth]{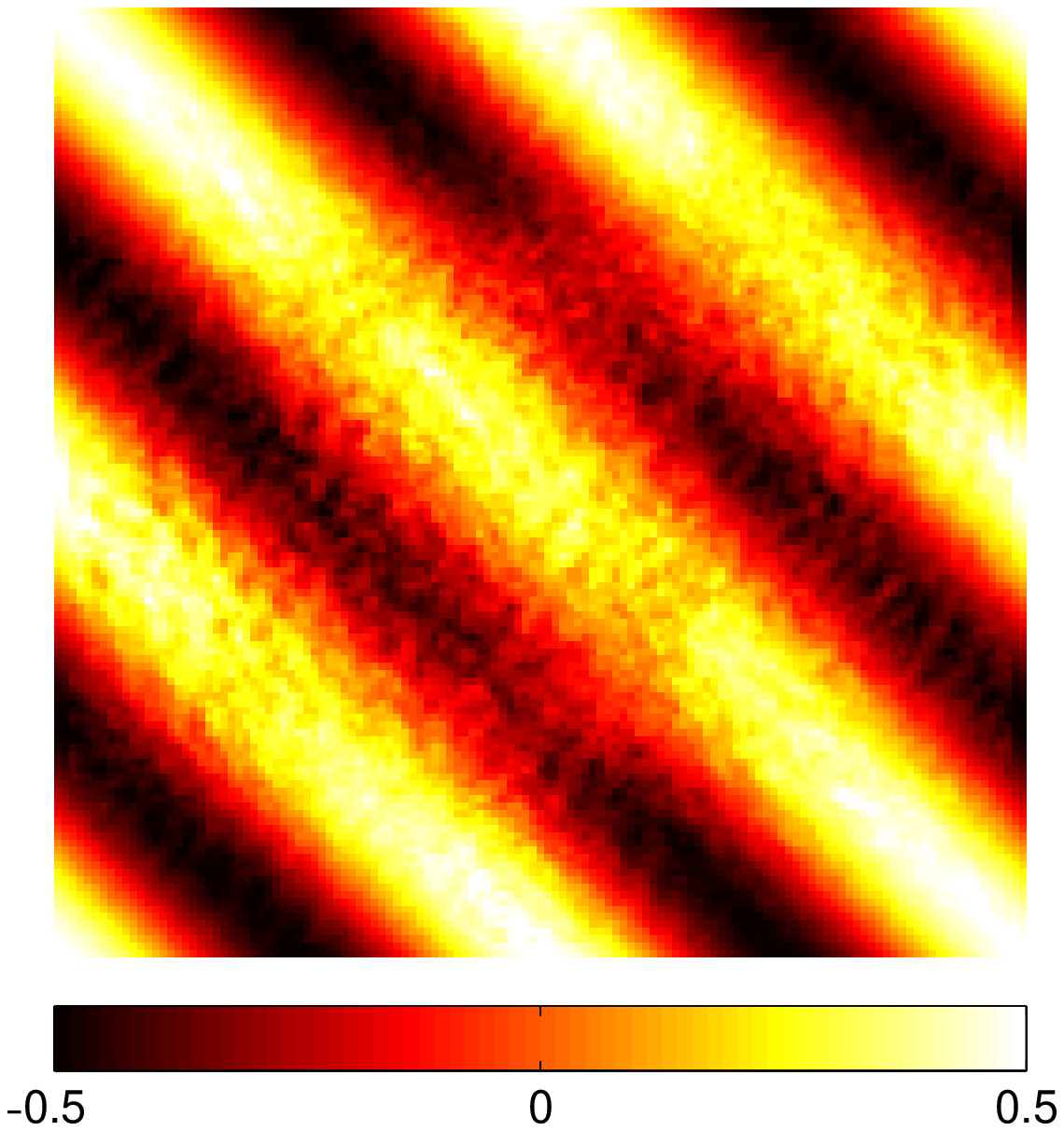}
    \label{fig:27}
    }
    \subfigure[$\zeta$ at $\{x=0.5\}$]{
    \includegraphics[width=0.22\textwidth]{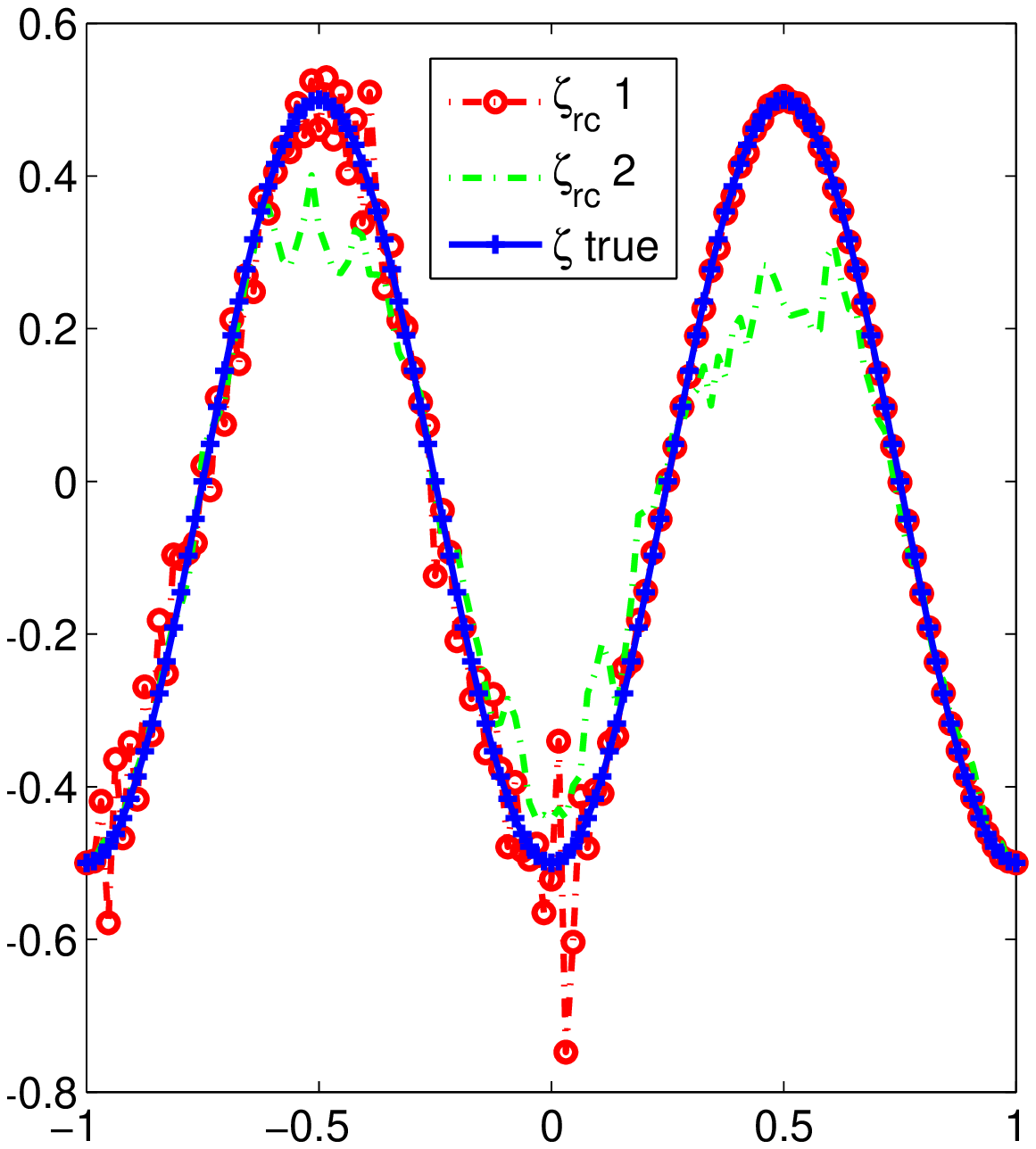}	
    \label{fig:28}
    }    
    \caption{Anisotropy reconstructions. \subref{fig:22}\&\subref{fig:26}: with rough $|\gamma|^{\frac{1}{2}}$ and noiseless data. \subref{fig:23}\&\subref{fig:27}: with smooth $|\gamma|^{\frac{1}{2}}$, noisy data ($\alpha=0.1\%$) and $p=100$ measurements.}
    \label{fig:2}
\end{figure}

\paragraph{Reconstruction of the anisotropy $\wtA$ from noisy data and $m=3p$ solutions, $p \ge 1$.}

Figs. \ref{fig:2noise}\subref{fig:2noise1}\&\subref{fig:2noise2} show that even very small amounts of noise in the three available internal functionals significantly affect the reconstruction of the anisotropic coefficients. 
The presence of noise creates local extrema for the function $\log d_1 - \log d_2$ so that its gradient and $Y_\bfg$ may vanish and \eqref{eq:reconsxizeta} may no longer hold.

To address this lack of robustness, we increase the number of available internal functionals to $3p$ for $p\geq1$, which correspond to the boundary conditions $(\bfg_1,\dots,\bfg_p)$. Instead of solving the linear system \eqref{eq:sysxizeta}, we solve the normal equation (in the $L^2$-minimizing sense) associated with the over-determined $6p$ linear equations, each pair of which looks like \eqref{eq:sysxizeta} with $\bfg = \bfg_j$ for $1\le j\le p$. The normal system reads
\begin{align*}
    \Xi \left[
    \begin{array}{c}
	\xi \\ \zeta	
    \end{array}
    \right] = \sum_{j=1}^p \left[
    \begin{array}{c}
	X_{\bfg_j}\cdot Y_{\bfg_j} \\ 0
    \end{array}
    \right], \quad \Xi := \sum_{j=1}^p \left[
    \begin{array}{cc}
	(x_{\bfg_j}^1)^2 + (y_{\bfg_j}^2)^2 & \text{sym} \\
	x_{\bfg_j}^1 x_{\bfg_j}^2 - y_{\bfg_j}^1 y_{\bfg_j}^2 & (x_{\bfg_j}^2)^2 + (y_{\bfg_j}^1)^2
    \end{array}
    \right],
\end{align*}
and thus may be inverted as
\begin{align}
    (\xi, \zeta) = (\det \Xi)^{-1} \Big( \sum_{j=1}^p X_{\bfg_j}\cdot Y_{\bfg_j} \Big) (\Xi_{22}, -\Xi_{21}). 
    \label{eq:xizetap}
\end{align}
Results are shown after $p=100$ iterations in Fig. \ref{fig:2}, where for $1\le j\le p$, we used the following boundary conditions
\begin{align*}
    g_{1,j}(x,y) &= (3 + (x,y)\cdot\hat\beta)^{1 + j/p},\quad \beta := \frac{2\pi j}{p},\quad \hat\beta:=(\cos\beta,\sin\beta), \\
    g_{2,j}(x,y) &= (x,y)\cdot \widehat{\beta+\frac{\pi}{4}} + 0.01 \Big( 2 + (x,y)\cdot \widehat{\beta+\frac{\pi}{4}} \Big)^{2+\frac jp}, \quad  g_{3,j}(x,y) = \left( 3 + (x,y)\cdot\widehat{\beta+\frac{\pi}{2}}\right)^{1 + \frac jp}.
\end{align*}
The relative $L^2$ errors for $\xi$ and $\zeta$ are $22\%$ and $27\%$, respectively. The effect of the number $p$ of measurements on the reconstruction can been on Fig. \ref{fig:2noise}. We observe a very slow convergence, which is consistent with the central limit theory, as $p$ increases. The slow convergence may be considerably sped up by adding more constraining prior information on $(\xi,\zeta)$ such as for instance regularity or sparsity constraints.  We do not explore this aspect here.


\begin{figure}[htpb]
    \centering 
    \subfigure[$\log d_1-\log d_2$($\alpha=0\%$)]{
    \includegraphics[width=0.22\textwidth]{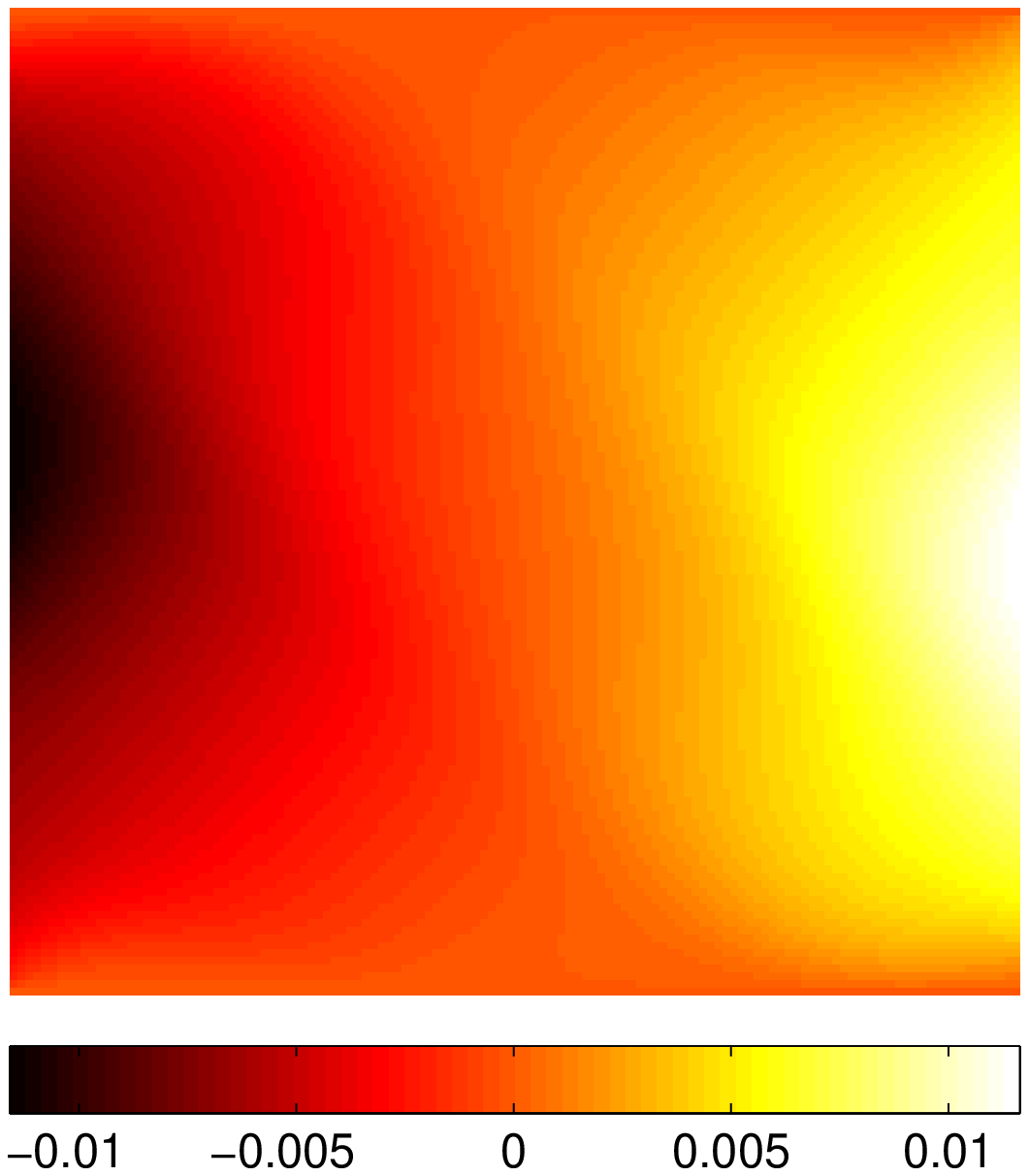}
    \label{fig:2noise1}
    }
    \subfigure[$\log d_1-\log d_2$($\alpha=1\%$)]{
    \includegraphics[width=0.22\textwidth]{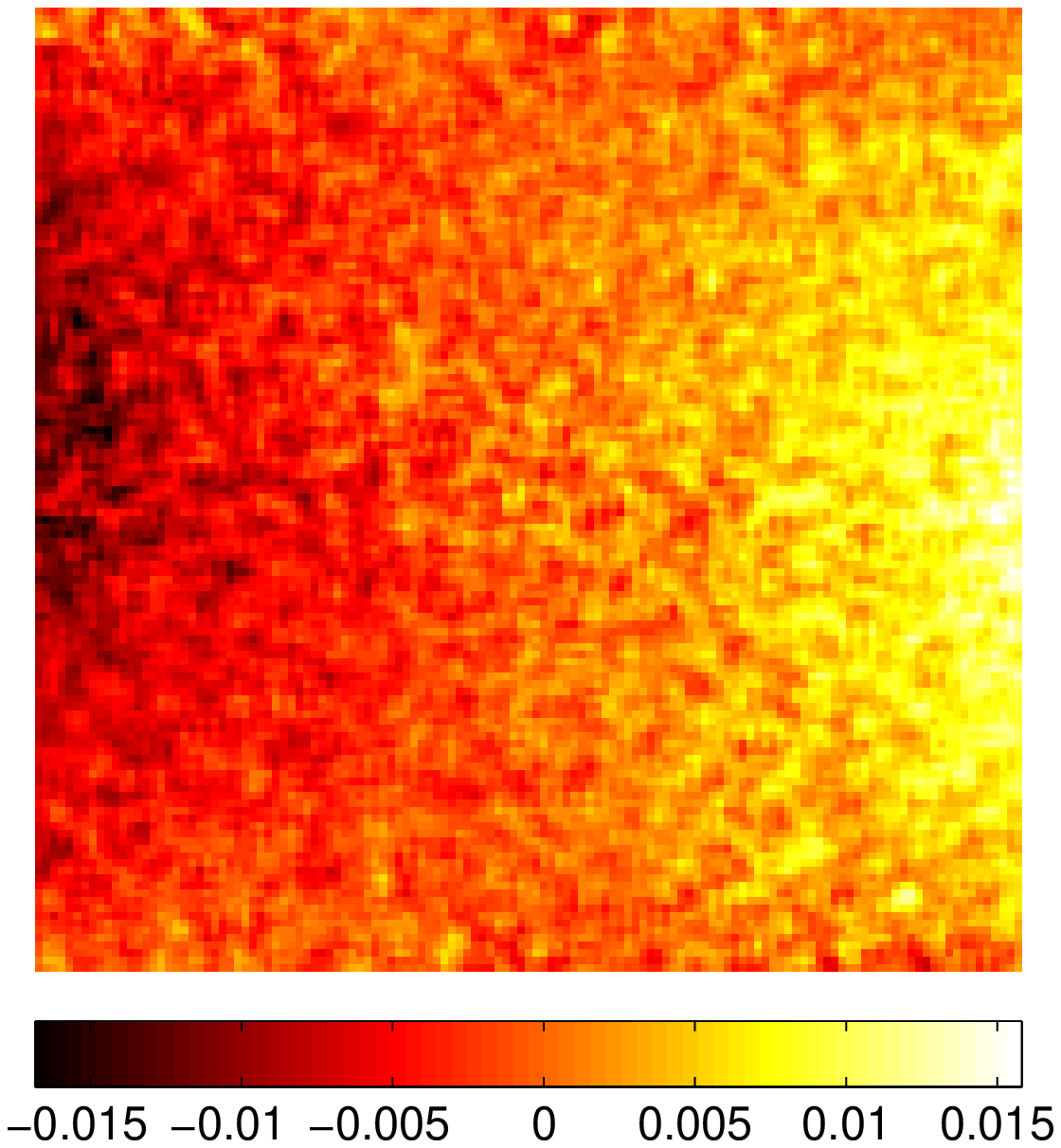}
    \label{fig:2noise2}
    }
    \subfigure[$\xi$ at $\{x=0.5\}$]{
    \includegraphics[width=0.22\textwidth]{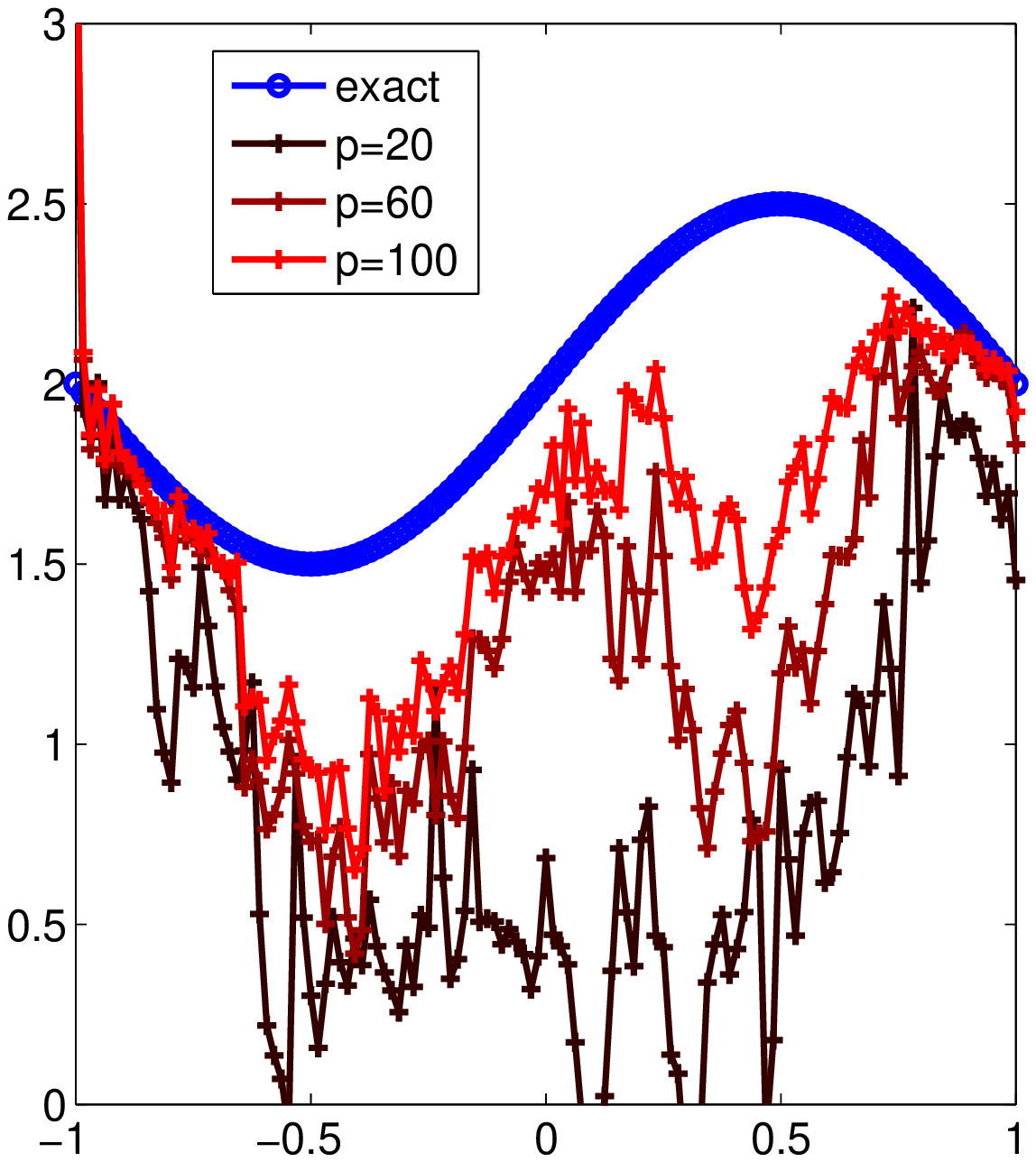}
    \label{fig:2noise3}
    }
    \subfigure[$\zeta$ at $\{x=0.5\}$]{
    \includegraphics[width=0.22\textwidth]{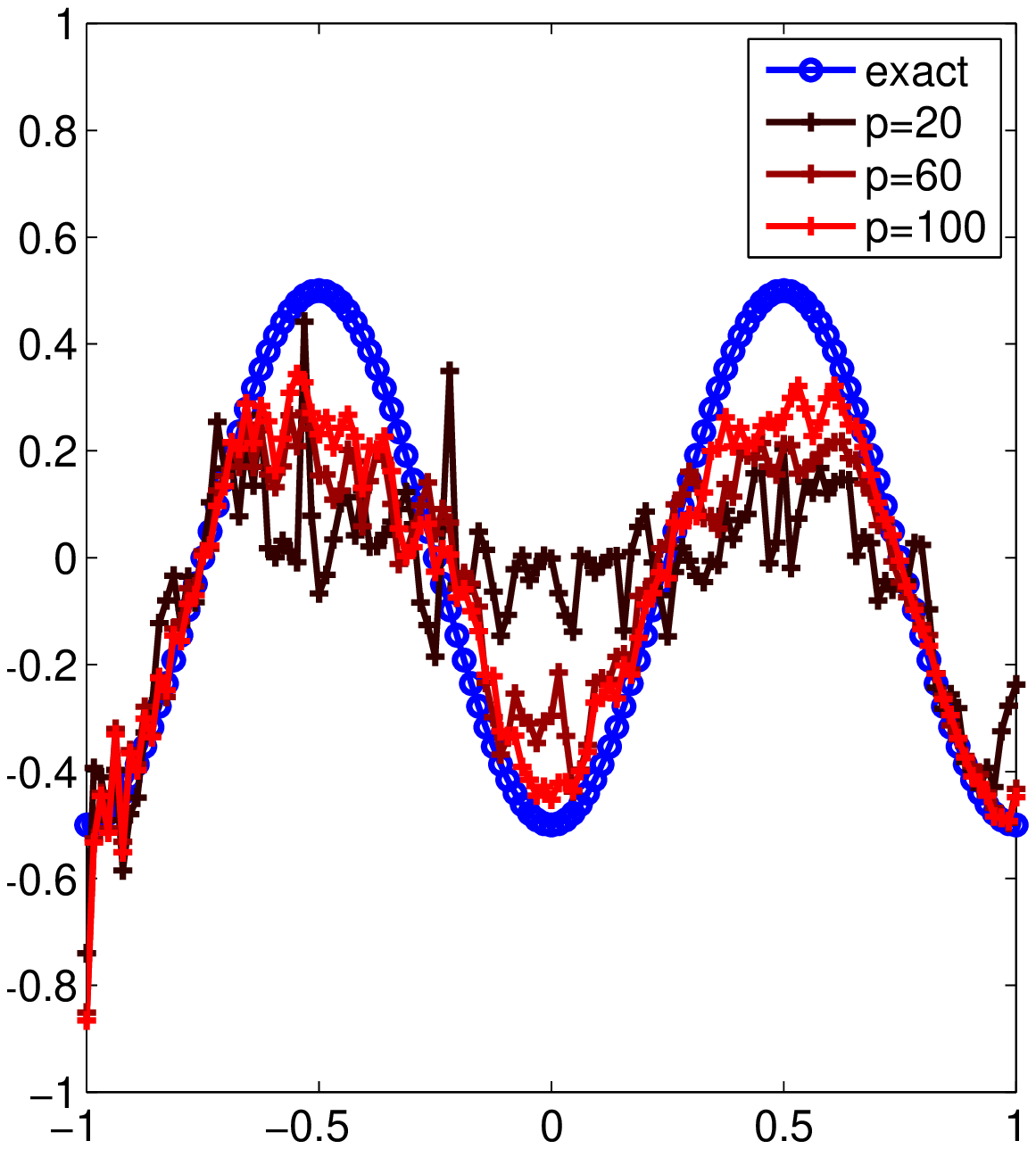}
    \label{fig:2noise4}
    }
    \caption{\subref{fig:2noise1}\&\subref{fig:2noise2}: influence of the noise on the function $\log d_1 - \log d_2$. \subref{fig:2noise3}\&\subref{fig:2noise4}: cross sections of $\xi$ and $\zeta$ using reconstruction formulas \eqref{eq:xizetap} for a few values of $p$.}
    \label{fig:2noise}
\end{figure}

\paragraph{Reconstruction of $|\gamma|^{\frac{1}{2}}$ via $(\theta, \log |\gamma|^{\frac{1}{2}})$.}
We now assume that the anisotropy $\wtA$ is known or reconstructed, and solve for $\theta$ by first applying the divergence operator to \eqref{eq:gradtheta_alone} and then for $\log |\gamma|^{\frac{1}{2}}$ by applying the divergence operator to \eqref{eq:nla2d2}. For both Poisson equations, we use exact data as boundary conditions. In the Gram-Schmidt case, $\hat\theta = \widehat{\wtA S_1}$ and the vector fields $\{V_{ij}\}$ are given by (recall that $d:= (H_{11}H_{22}-H_{12}^2)^\frac{1}{2}$) 
\begin{align*}
    V_{11} = \nabla\log H_{11}^{-\frac{1}{2}}, \quad V_{12} = 0,\quad V_{21} = -\frac{H_{11}}{d}\nabla\left( \frac{H_{12}}{H_{11}} \right), \quad V_{22} = \nabla \log (H_{11}^\frac{1}{2} d^{-1}),
\end{align*}
where the data come from solutions $(u_1,u_2)$ with boundary conditions $(g_1,g_2)(x,y) = (x,y)$ for $(x,y)\in\partial (-1,1)^2$. 

\begin{figure}[htpb]
    \centering 
    \subfigure[$H_{11}$ ($\alpha=0\%$)]{
    \includegraphics[width=0.22\textwidth]{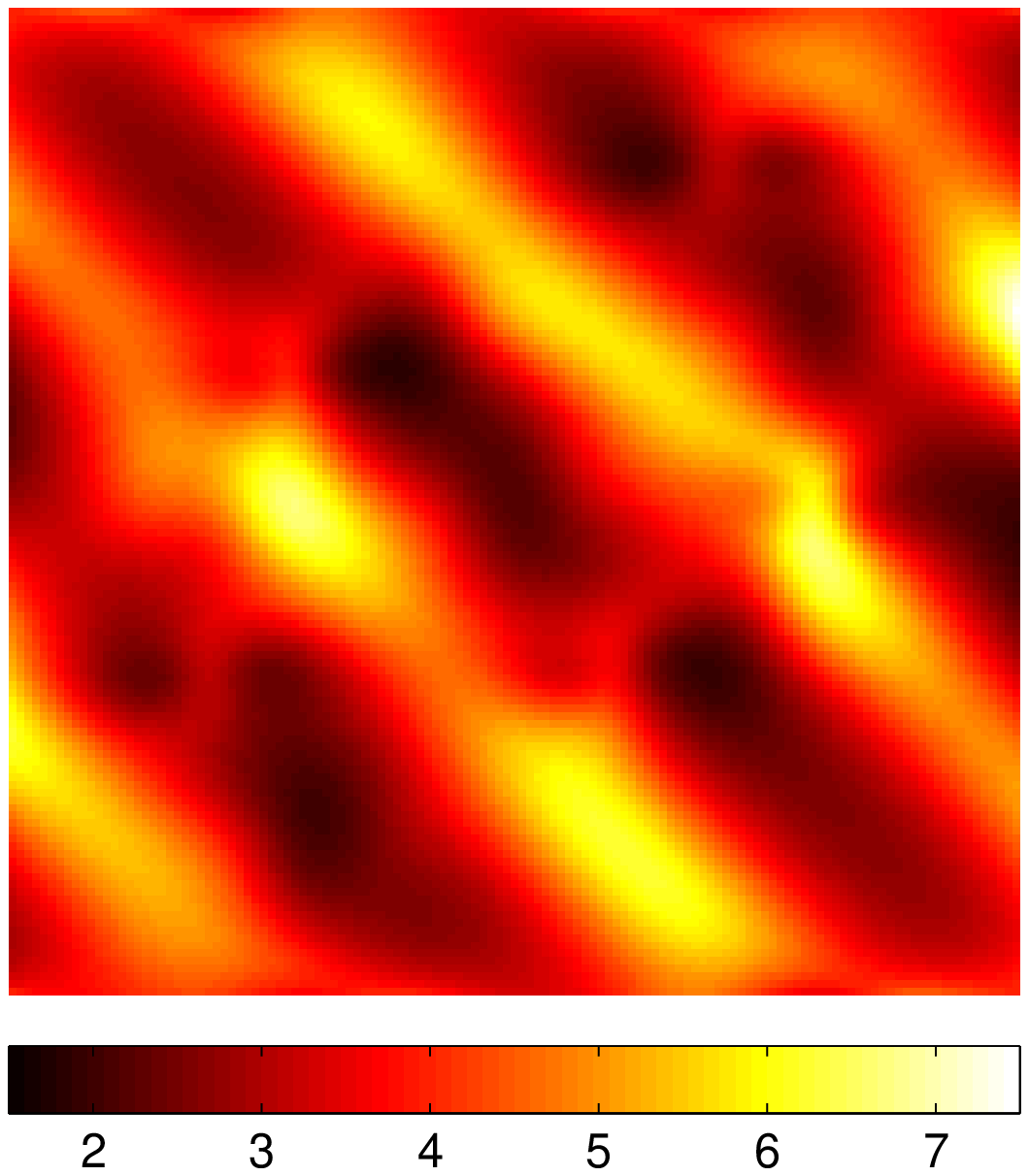}
    \label{fig:31}
    }
    \subfigure[$H_{11}$ ($\alpha=30\%$)]{
    \includegraphics[width=0.22\textwidth]{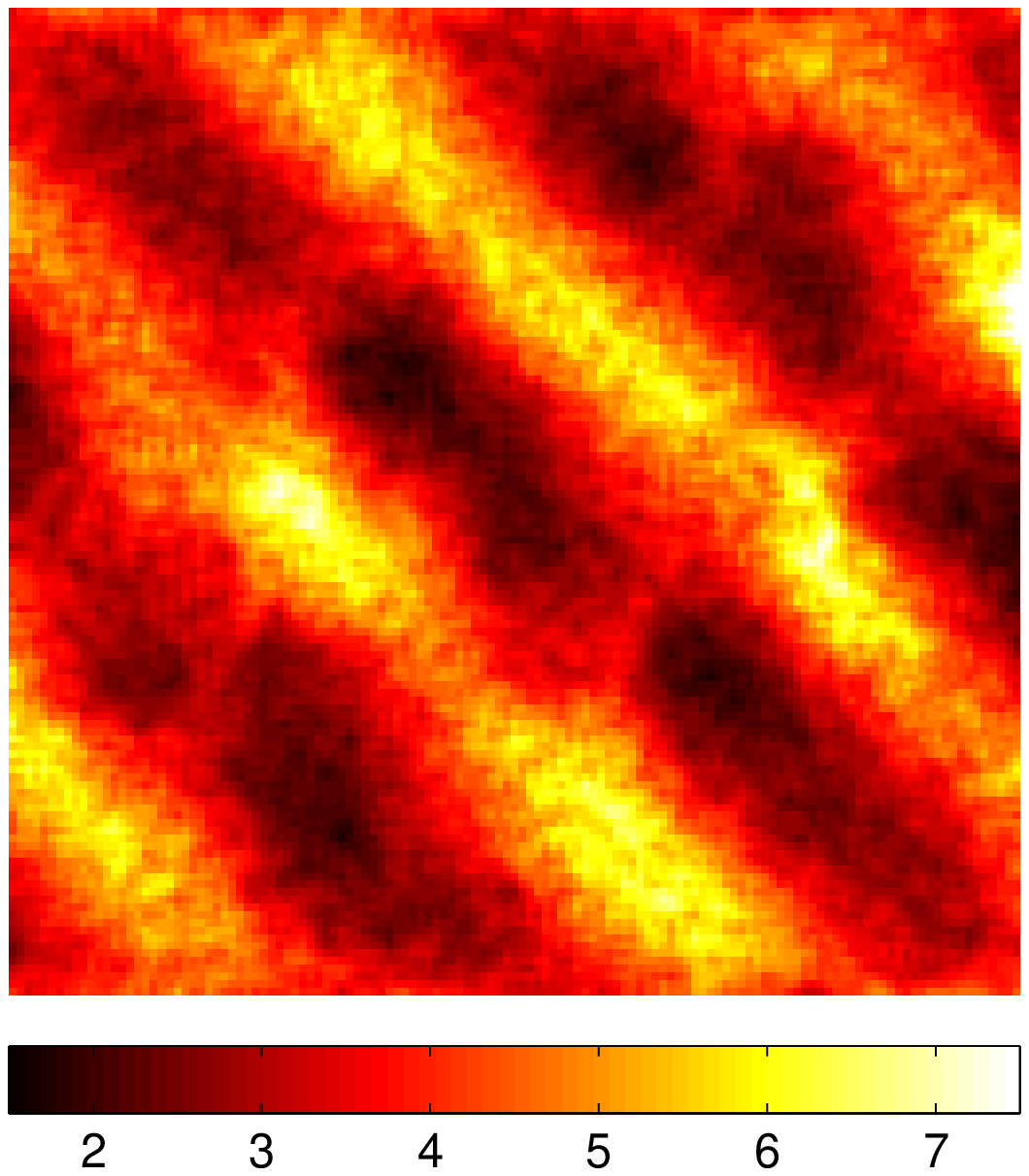}
    \label{fig:32}
    }
    \subfigure[$H_{12}$ ($\alpha=0\%$)]{
    \includegraphics[width=0.22\textwidth]{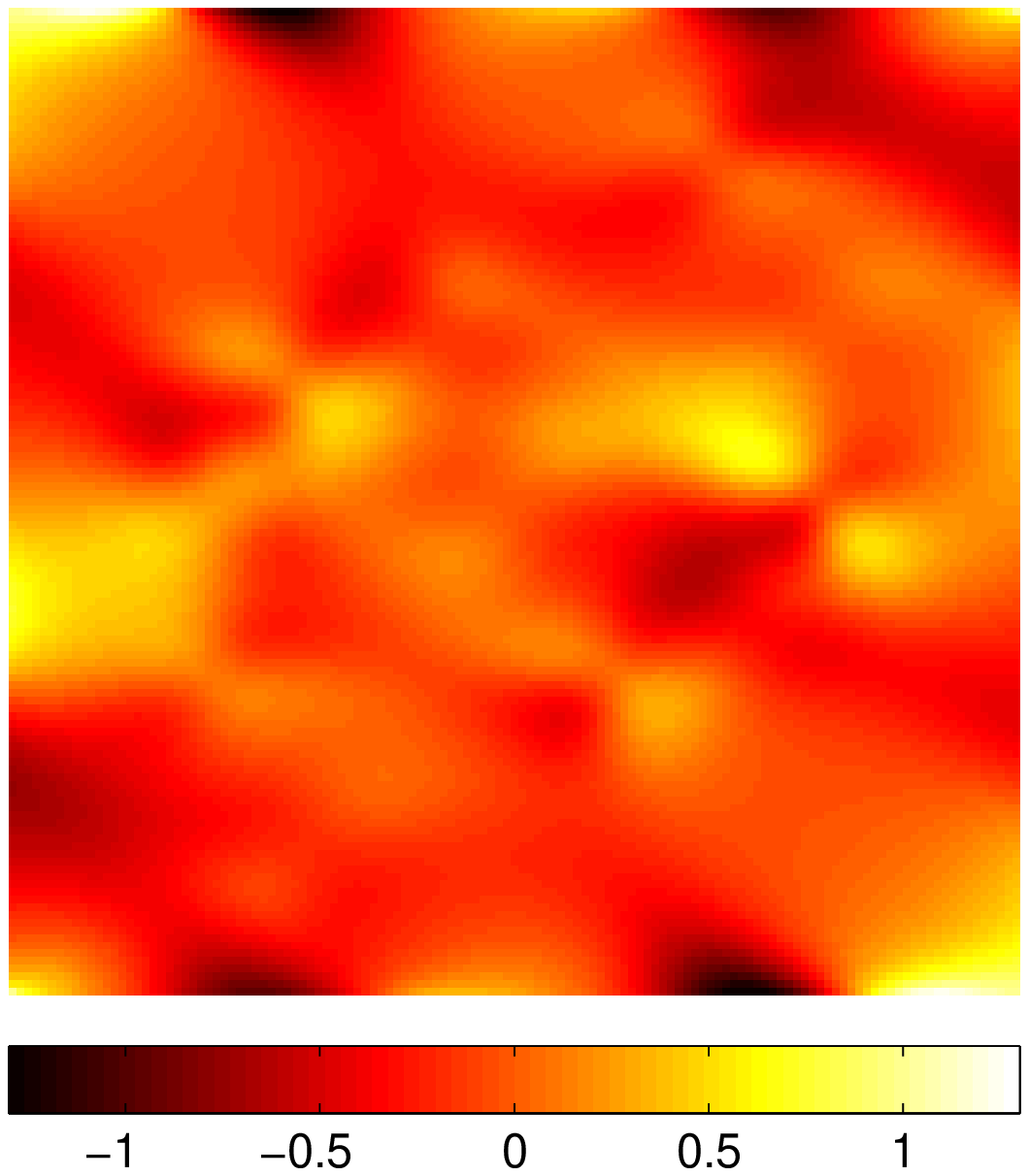}	
    \label{fig:33}
    }
    \subfigure[$H_{12}$ ($\alpha=0\%$)]{
    \includegraphics[width=0.22\textwidth]{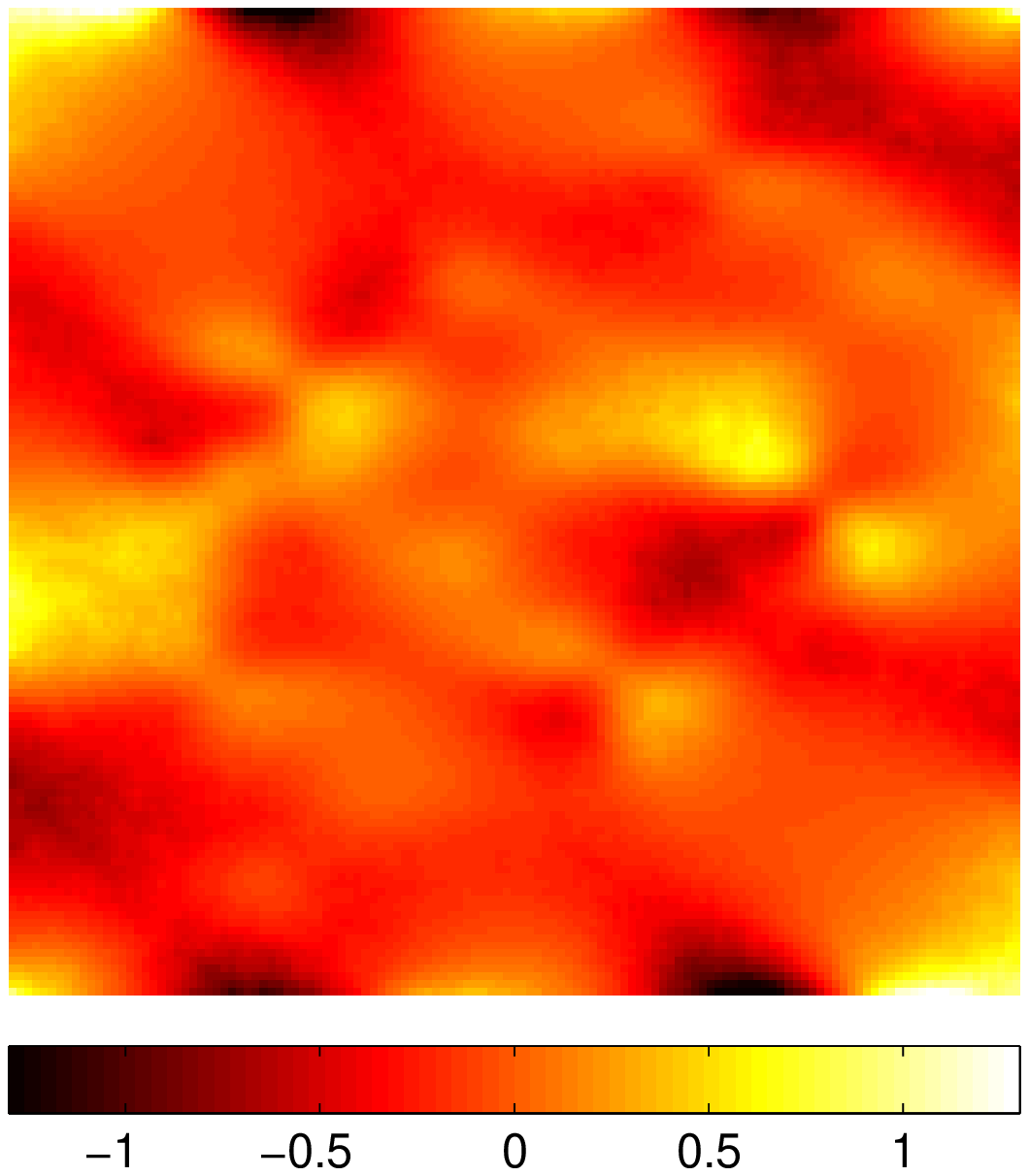}		
    \label{fig:34}
    }    
    \caption{Examples of measurement data.}
    \label{fig:3}
\end{figure}
The isotropic part modeled by $|\gamma|^{\frac{1}{2}}$ is given in Fig. \ref{fig:4}\subref{fig:45}. Fig. \ref{fig:3} displays the corresponding internal functionals with and without noise. Fig. \ref{fig:4} displays the reconstructed $\theta$ and $|\gamma|^\frac{1}{2}$. These reconstructions are quite robust to noise when the anisotropy is known: the relative $L^2$ ($L^\infty$) errors are $0.06\%$ ($0.14\%$) for $|\gamma|^\frac{1}{2}$ and $0.04\%$ ($0.4\%$) for $\theta$ with noiseless data, and $3.2\%$ ($12.5\%$) for $|\gamma|^\frac{1}{2}$ and $14.2\%$ ($24.5\%$) for $\theta$ with $30\%$ noise. If the anisotropy $\wtA$ is first reconstructed from noisy data, this certainly has repercussions on the reconstructed $(\theta, |\gamma|^\frac{1}{2})$, as can be seen in Fig. \ref{fig:4}\subref{fig:43}\&\subref{fig:47}. In this case, the $L^2$ ($L^\infty$) relative errors are $2.2\%$ ($9\%$) for $|\gamma|^\frac{1}{2}$ and $42.7\%$ ($60\%$) for $\theta$. 

\begin{figure}[htpb]
    \centering 
    \subfigure[true $\theta$]{
    \includegraphics[width=0.22\textwidth]{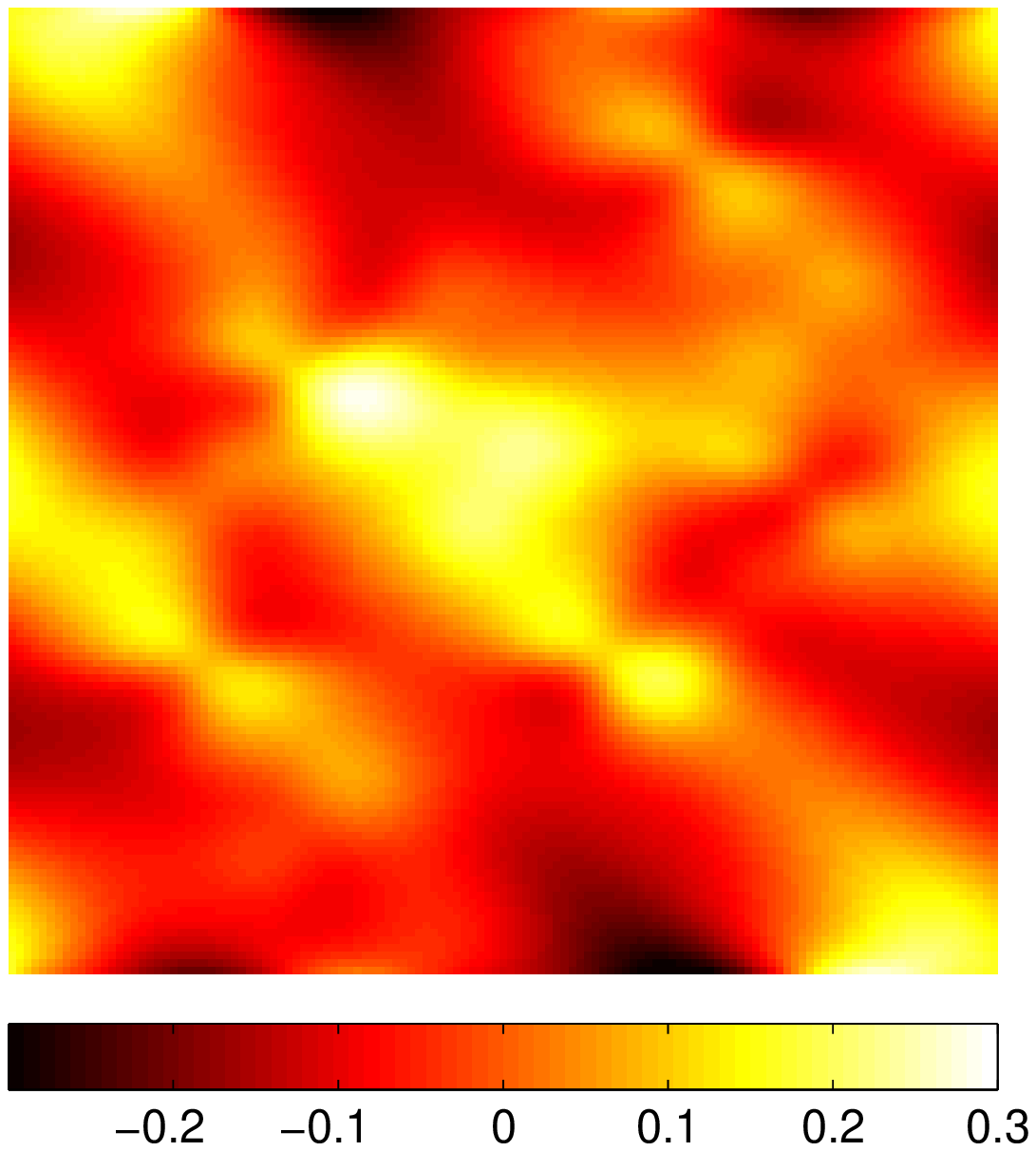}
    \label{fig:41}
    }
    \subfigure[$\theta$ (``1'' on \subref{fig:24})]{
    \includegraphics[width=0.22\textwidth]{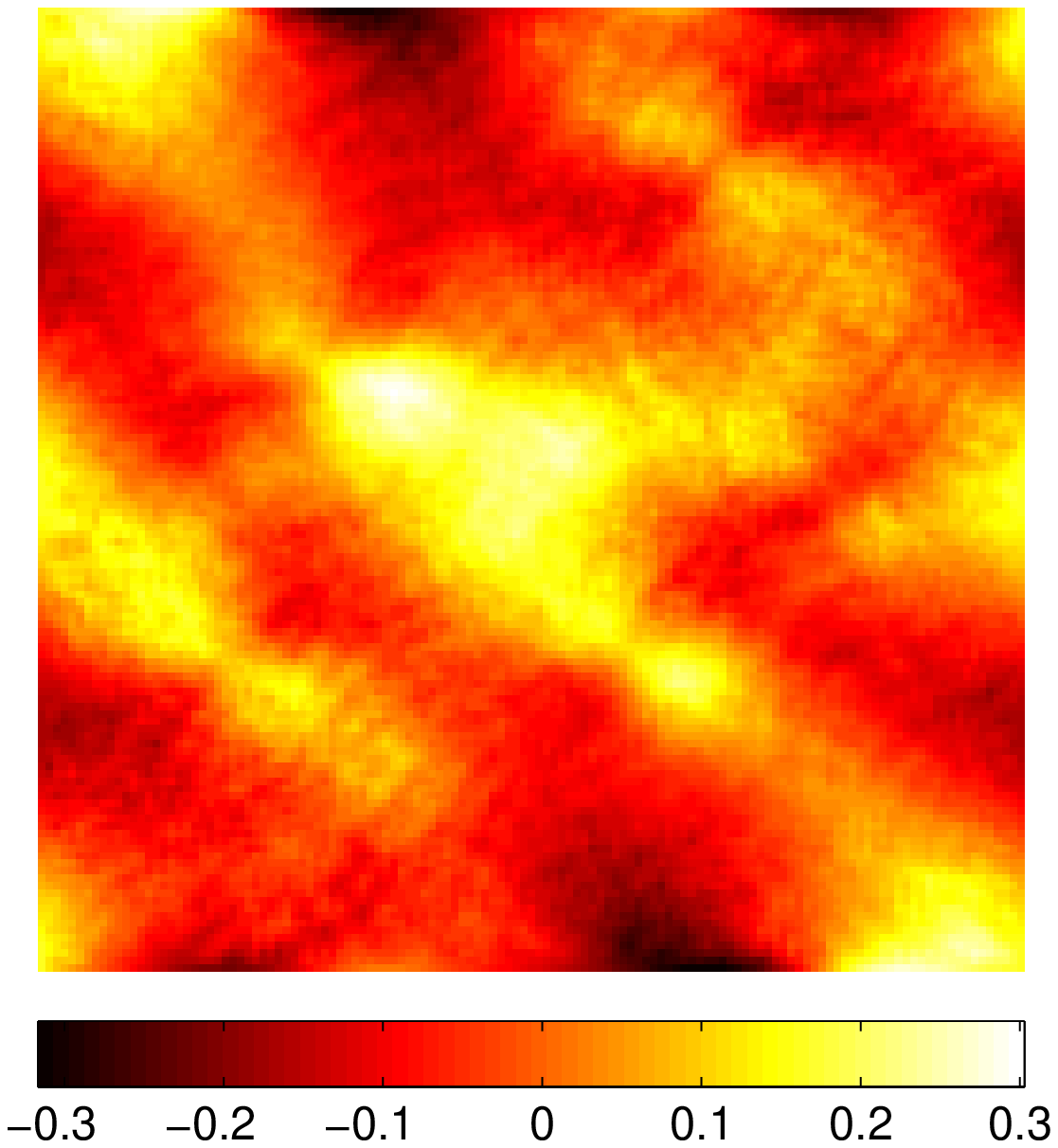}
    \label{fig:42}
    }
    \subfigure[$\theta$ (``1'' on \subref{fig:24})]{
    \includegraphics[width=0.22\textwidth]{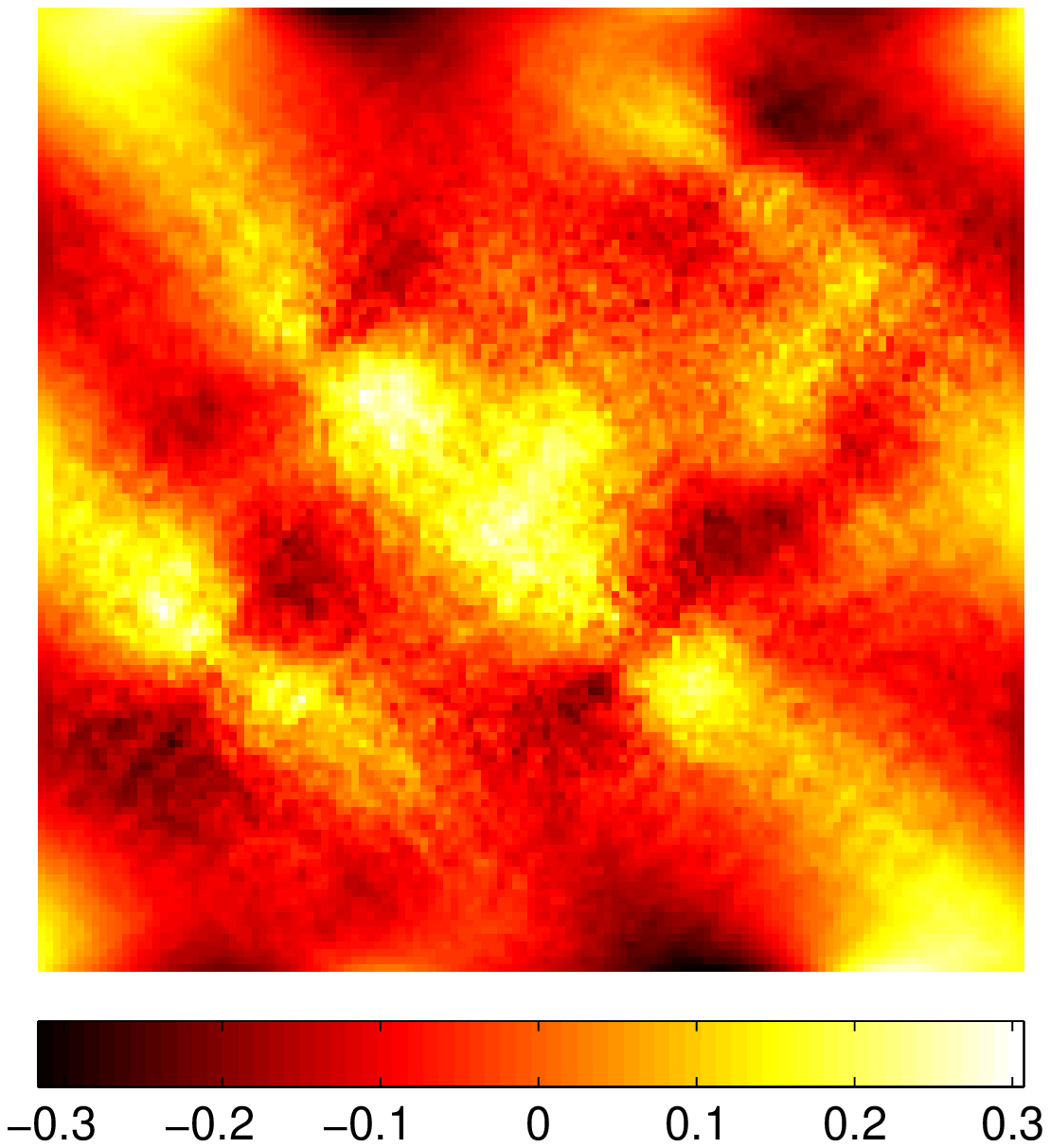}
    \label{fig:43}
    }
    \subfigure[$\theta$ at $\{x=0.5\}$]{
    \includegraphics[width=0.22\textwidth]{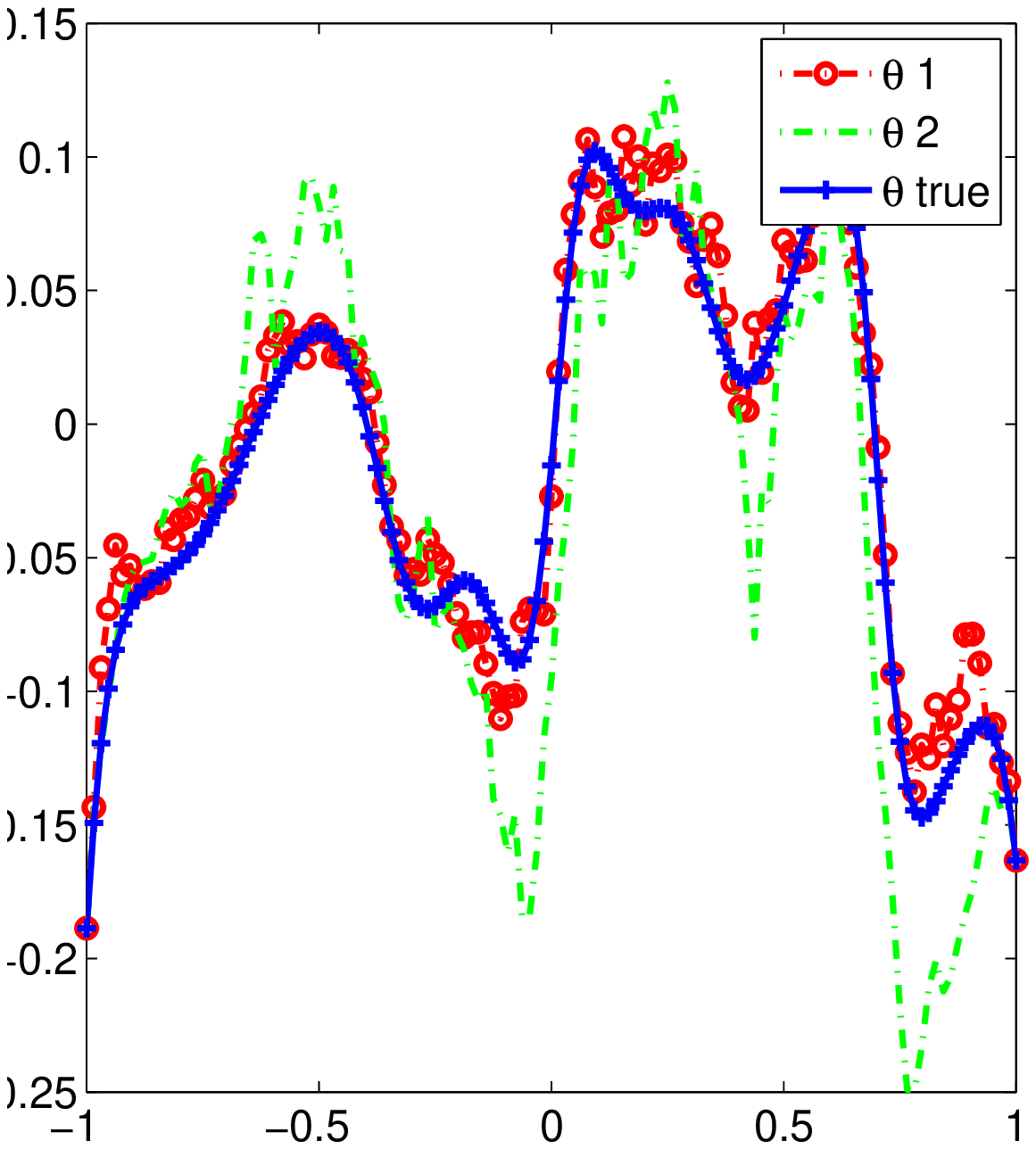}
    \label{fig:44}
    }    
    \subfigure[true $|\gamma|^{\frac{1}{2}}$]{
    \includegraphics[width=0.22\textwidth]{lda}
    \label{fig:45}
    }
    \subfigure[$|\gamma|^{\frac{1}{2}}$ (``1'' on \subref{fig:28})]{
    \includegraphics[width=0.22\textwidth]{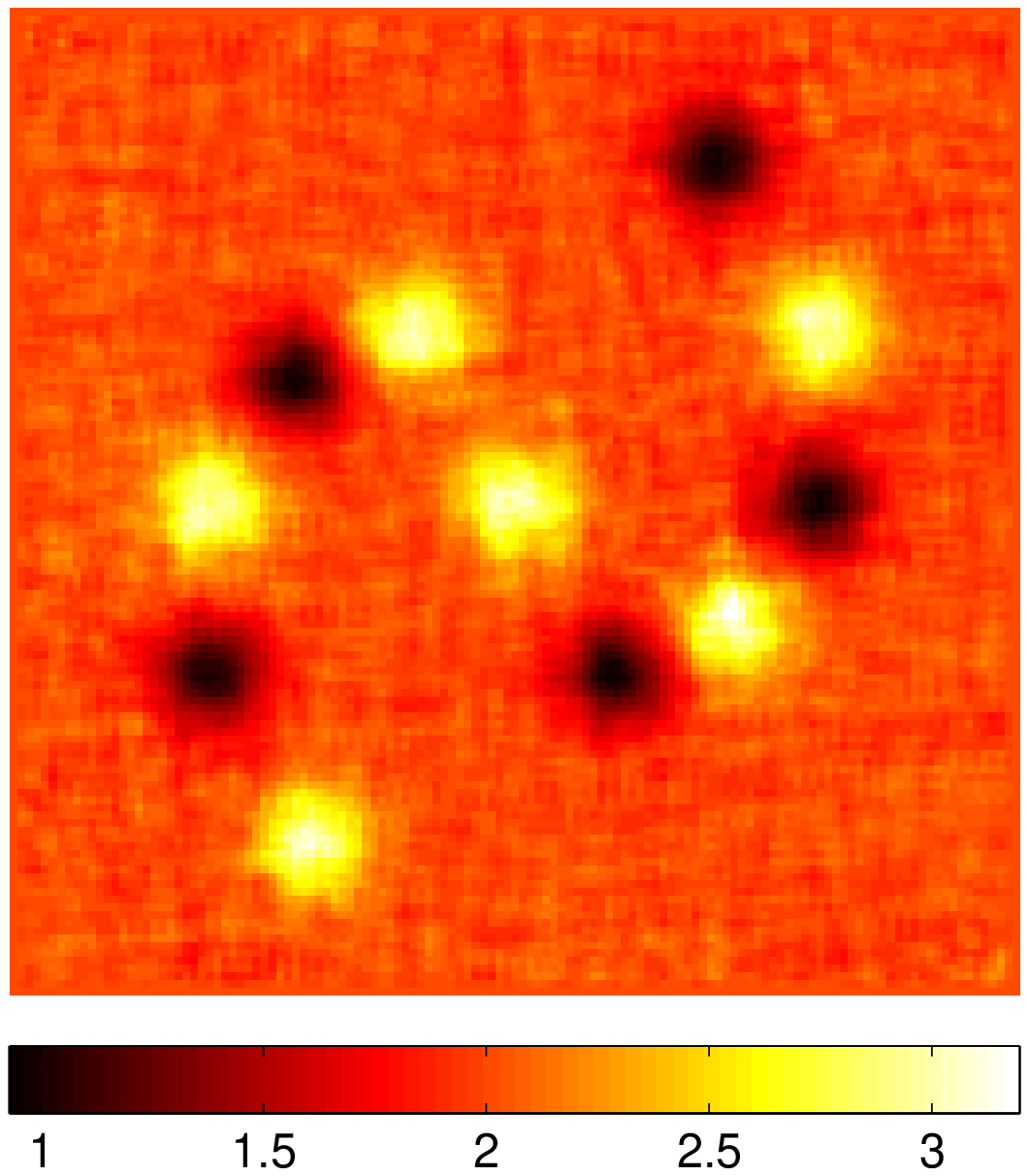} 
    \label{fig:46}
    }
    \subfigure[$|\gamma|^{\frac{1}{2}}$ (``2'' on \subref{fig:28})]{
    \includegraphics[width=0.22\textwidth]{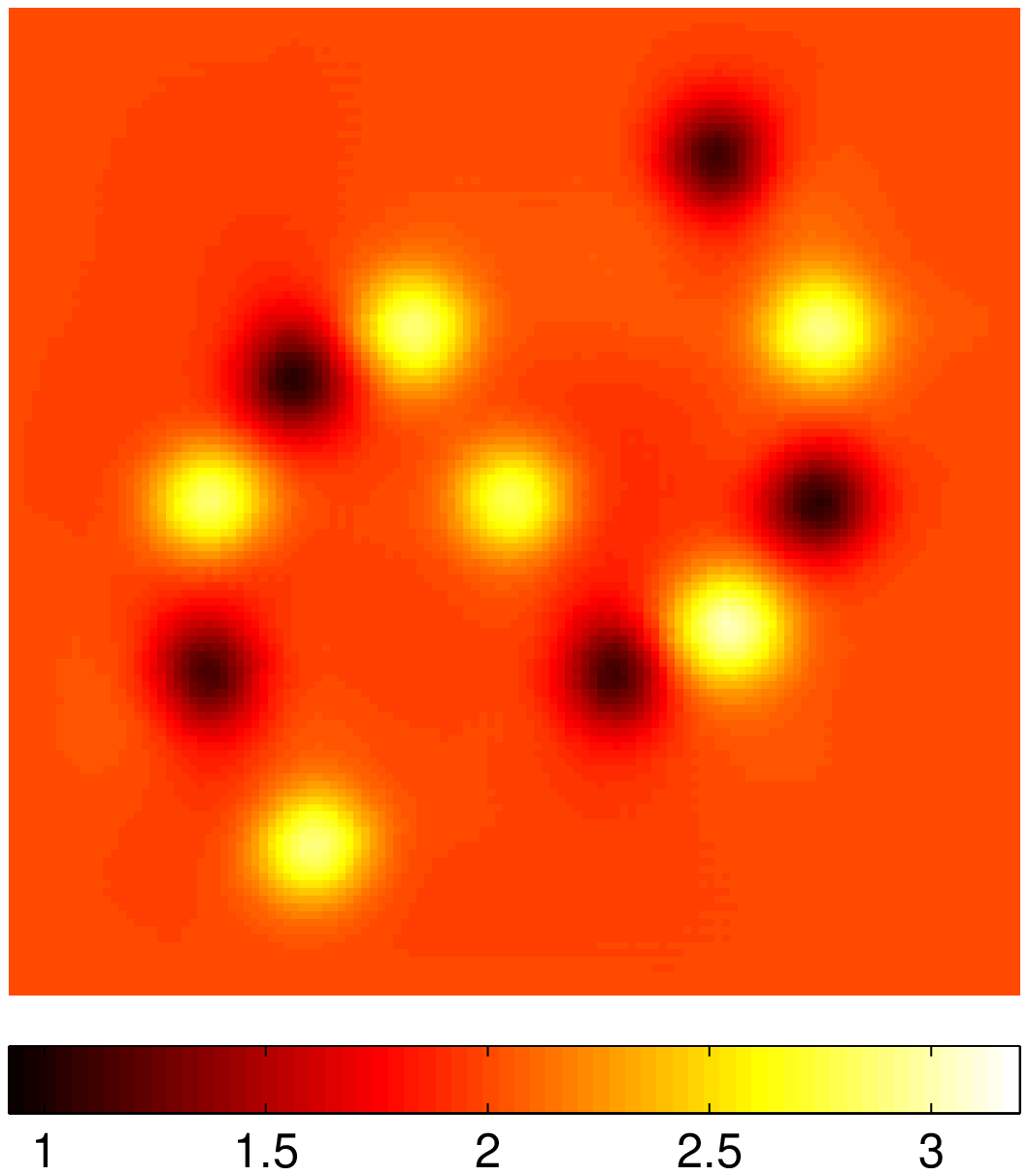}
    \label{fig:47}
    }
    \subfigure[$|\gamma|^{\frac{1}{2}}$ at $\{x=0.5\}$]{
    \includegraphics[width=0.22\textwidth]{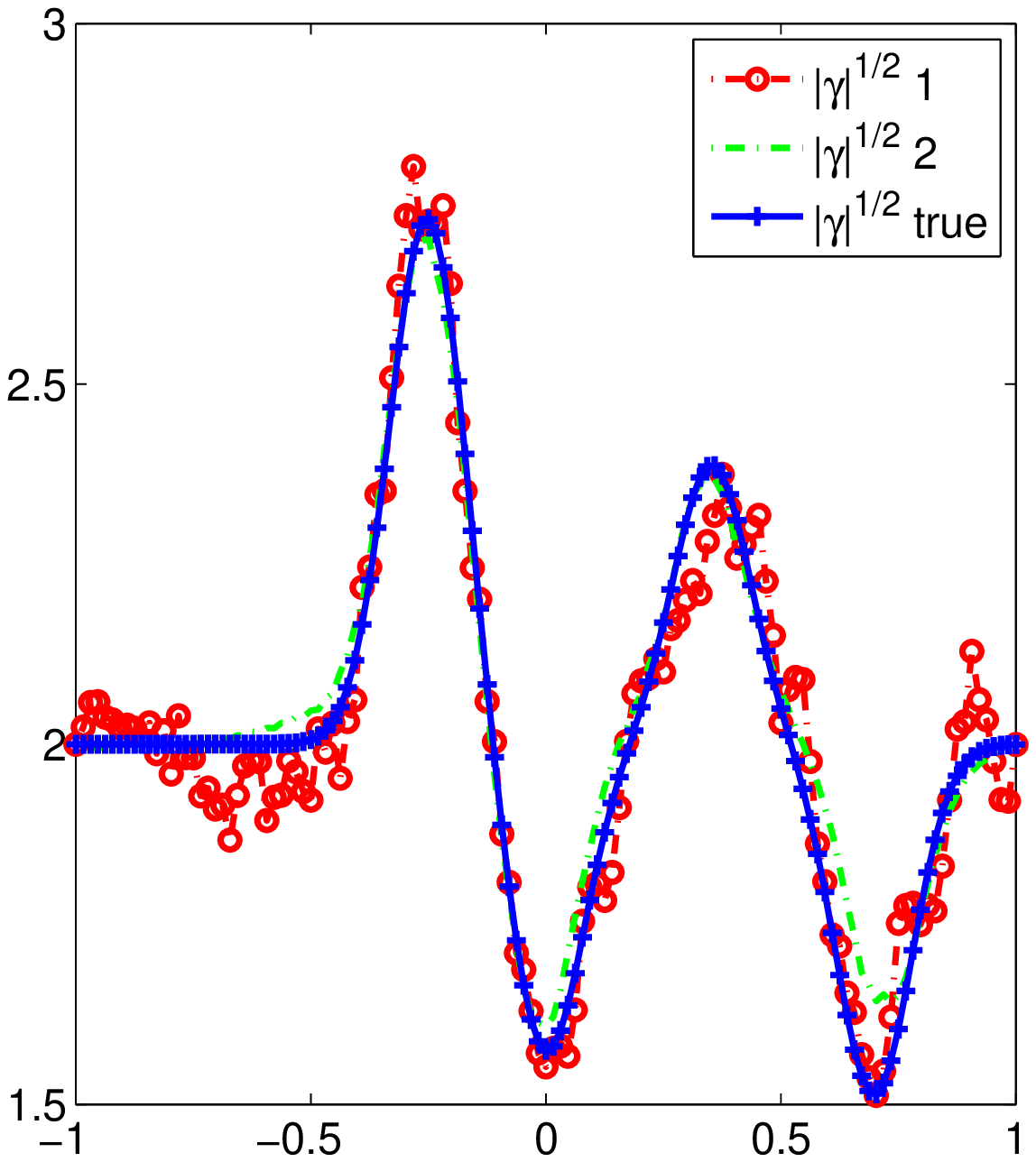}	
    \label{fig:48}
    }        
    \caption{Reconstruction of $(\theta,|\gamma|^{\frac{1}{2}})$. \subref{fig:42}\&\subref{fig:46}: with true anisotropy and noisy data ($\alpha=30\%$). \subref{fig:43}\&\subref{fig:47}: with noisy data ($\alpha = 0.1\%$) using reconstructed anisotropy from Fig.\ref{fig:2}\subref{fig:23}\&\subref{fig:27}.}
    \label{fig:4}
\end{figure}

\paragraph{Reconstruction of $|\gamma|^{\frac{1}{2}}$ via $(u_1, u_2, |\gamma|^{-\frac{1}{2}})$.}

We still assume $\wtA$ known with the coefficients $(\xi,\zeta)$ displayed in Fig. \ref{fig:2}. The boundary conditions are the same as in the preceding example. The isotropic component $|\gamma|^{\frac{1}{2}}$ is now given in Fig. \ref{fig:5}\subref{fig:55} and corresponds to a non-smooth coefficient. In this context, we first reconstruct $(u_1,u_2)$ by solving system \eqref{eq:sces_div}, after which we reconstruct $|\gamma|^{-\frac{1}{2}}$ by taking the divergence of \eqref{eq:nlda_m2}. 
The relative $L^2$ ($L^\infty$) errors are $3.9e-13\%$ ($6.8e-13\%$) for $u_1$, $2.5e-13\%$ ($6.8e-13\%$) for $u_2$ and $13\%$ ($62\%$) for $|\gamma|^{\frac{1}{2}}$ in the case of noisefree data, and $0.2\%$ ($0.7\%$) for $u_1$, $0.1\%$ ($0.4\%$) for $u_2$ and $14\%$ ($71\%$) for $|\gamma|^{\frac{1}{2}}$ in the case of data polluted with $30\%$ noise. See Fig. \ref{fig:5} for the display of some reconstructions. Based on the numerical simulations that we have performed, this reconstruction method and the previous one are very comparable in terms of accuracy and robustness.
\begin{figure}[htpb]
    \centering 
    \subfigure[$u_1$]{
    \includegraphics[width=0.22\textwidth]{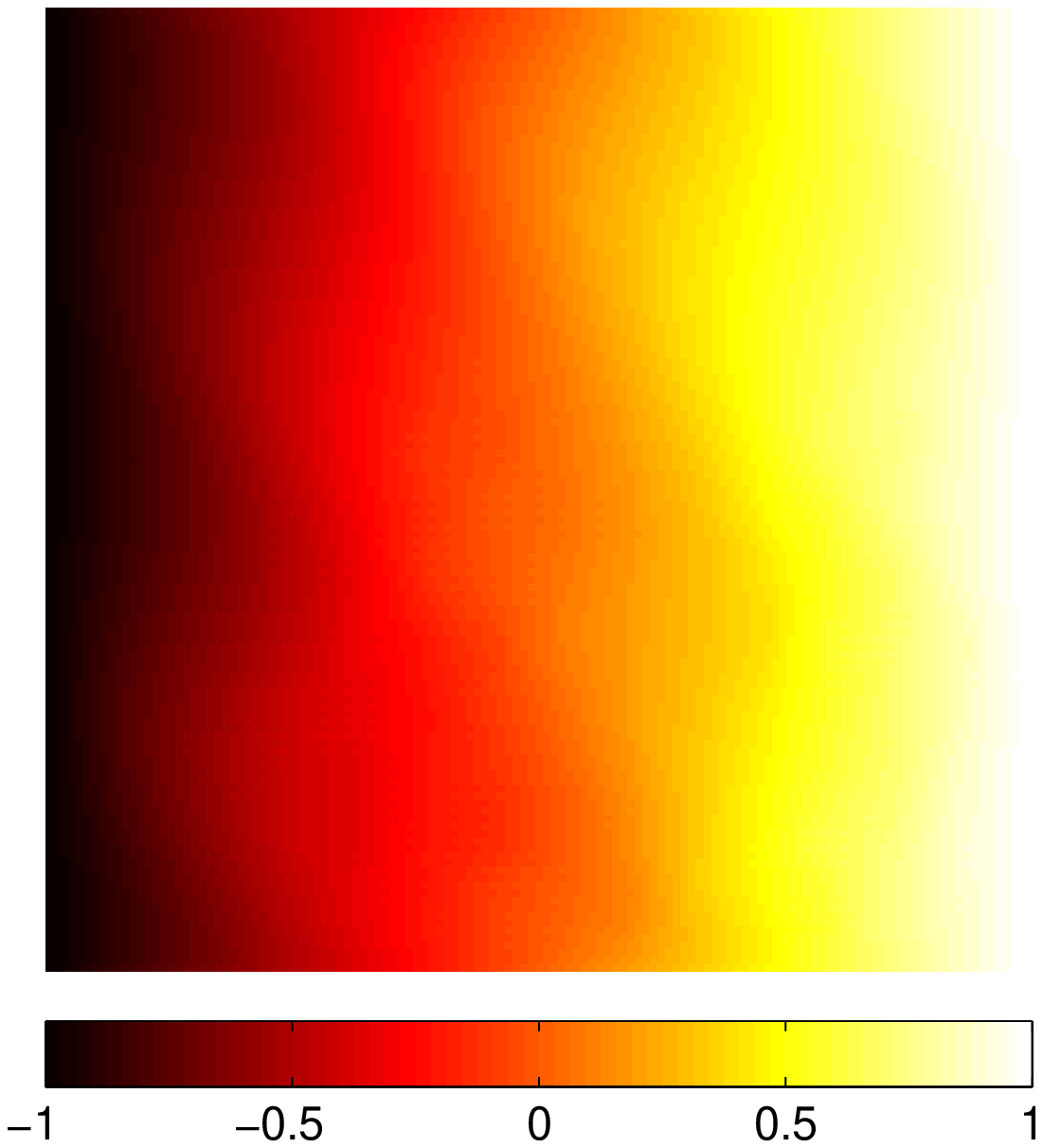}
    \label{fig:51}
    }
    \subfigure[$u_1$ at $\{x=0.5\}$]{
    \includegraphics[width=0.22\textwidth]{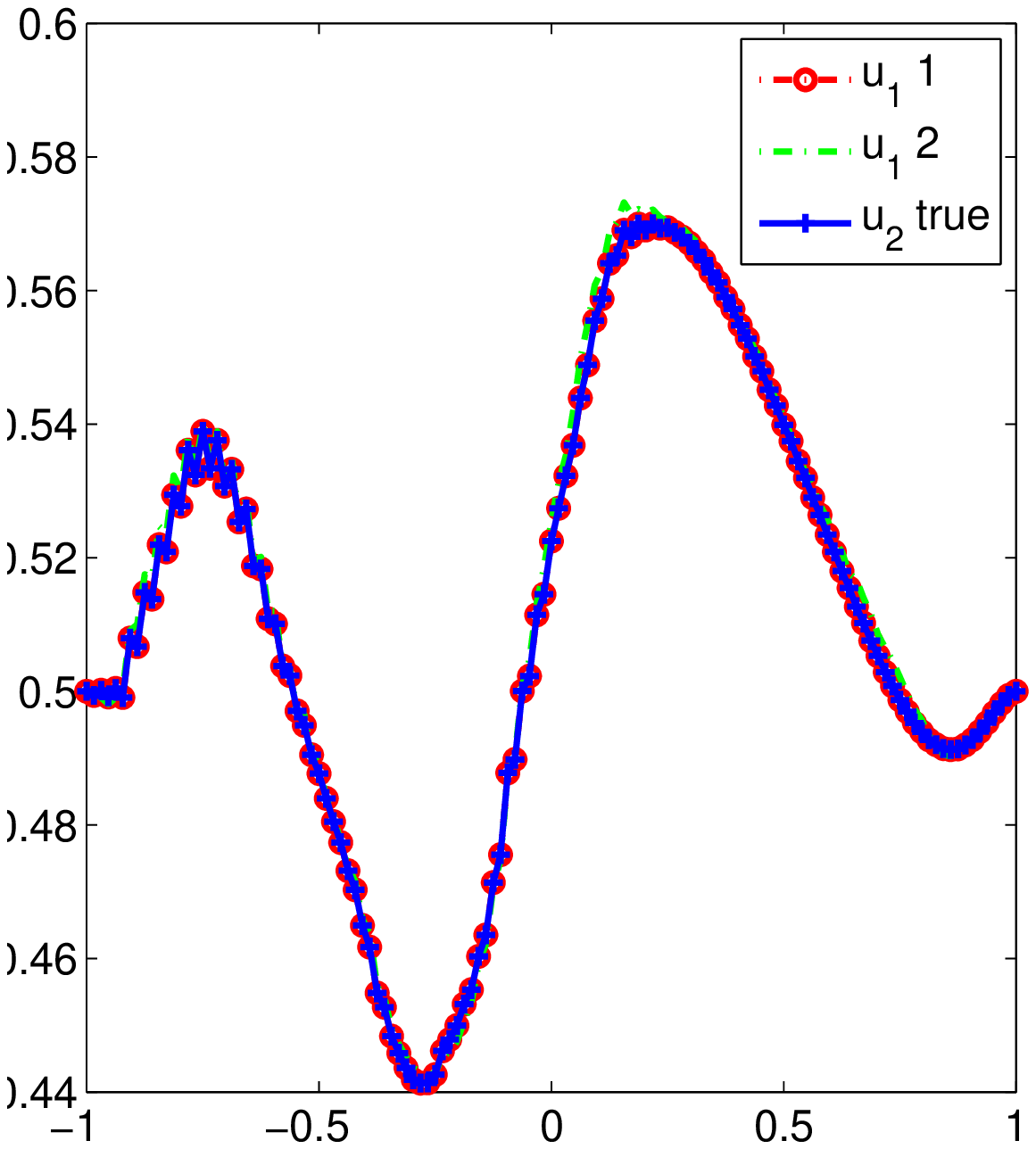}
    \label{fig:52}
    }
    \subfigure[$u_2$]{
    \includegraphics[width=0.22\textwidth]{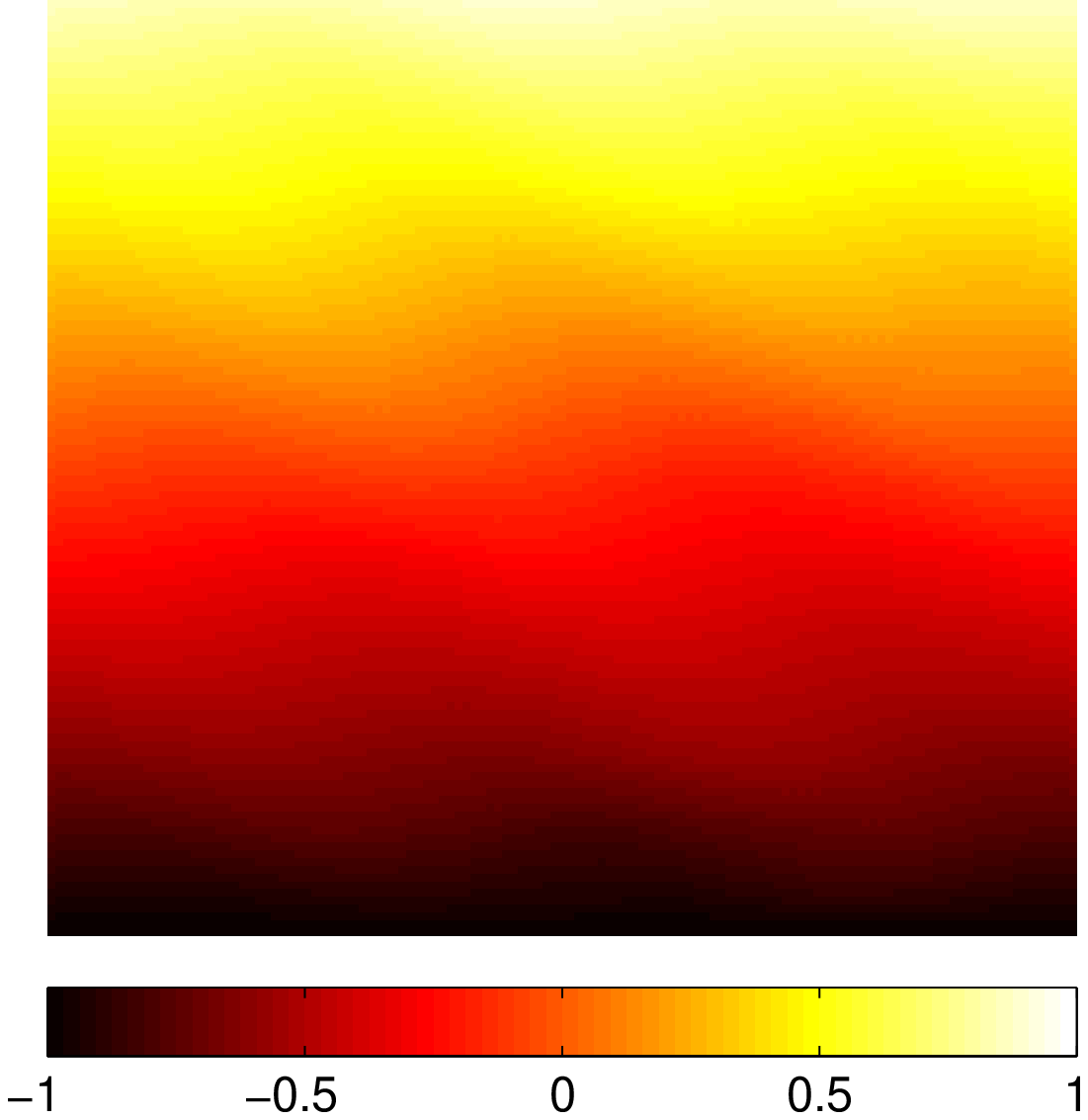}
    \label{fig:53}
    }
    \subfigure[$u_2$ at $\{x=0.5\}$]{
    \includegraphics[width=0.22\textwidth]{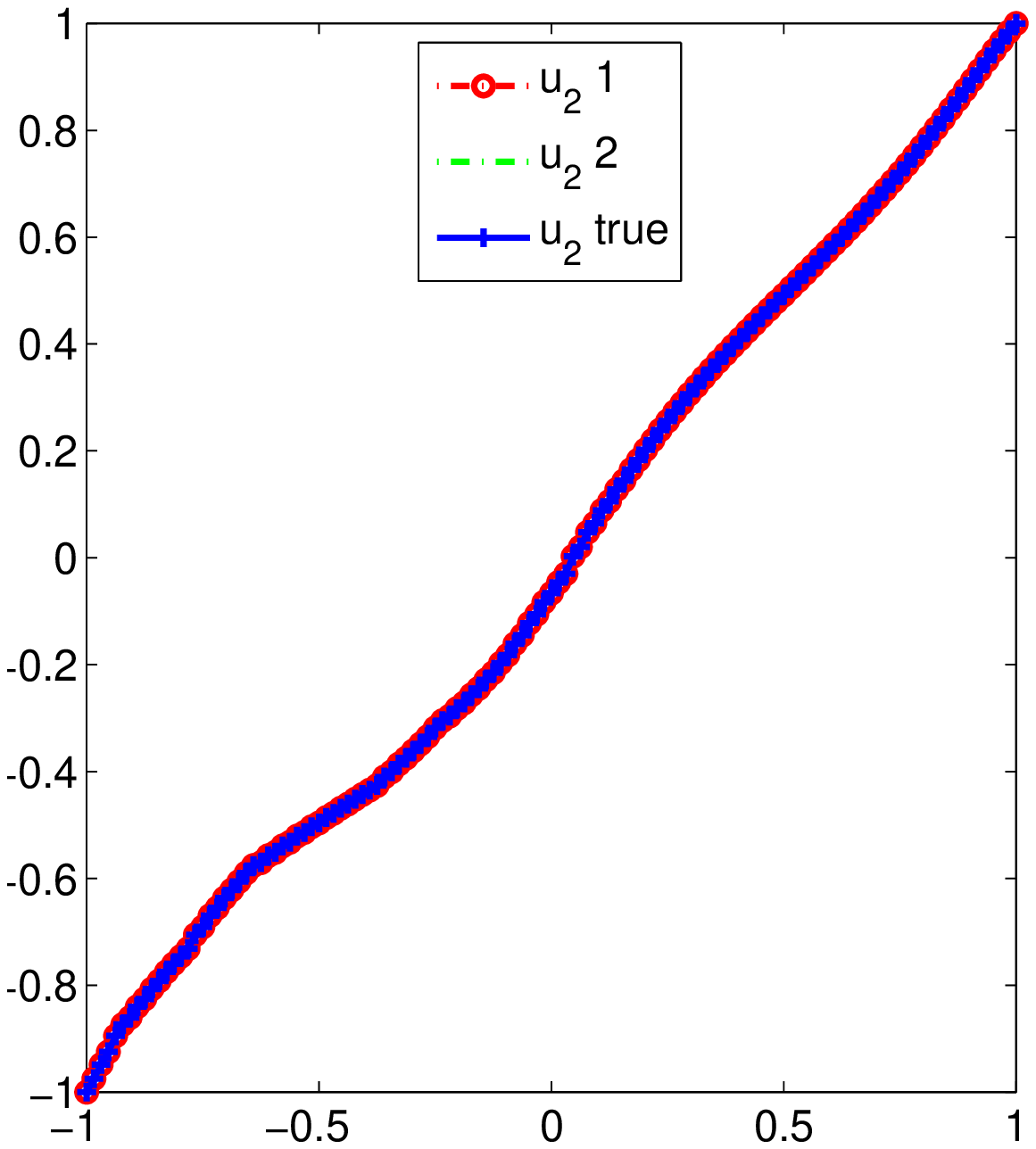}
    \label{fig:54}
    }    
    \subfigure[true $|\gamma|^{\frac{1}{2}}$]{
    \includegraphics[width=0.22\textwidth]{ldan1}
    \label{fig:55}
    }
    \subfigure[$|\gamma|^{\frac{1}{2}}$ (``1'' on \subref{fig:58})]{
    \includegraphics[width=0.22\textwidth]{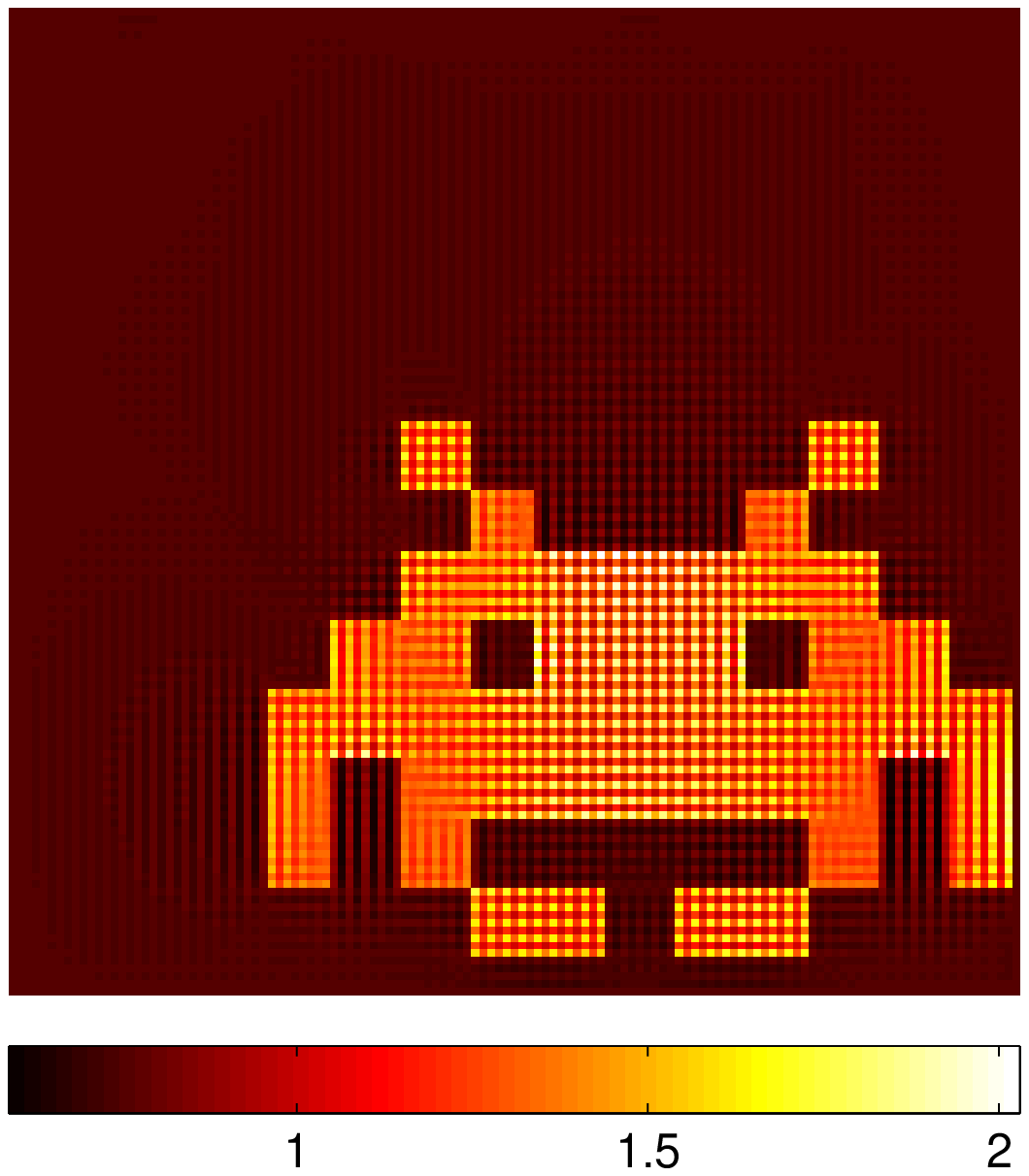} 
    \label{fig:56}
    }
    \subfigure[$|\gamma|^{\frac{1}{2}}$ (``2'' on \subref{fig:58})]{
    \includegraphics[width=0.22\textwidth]{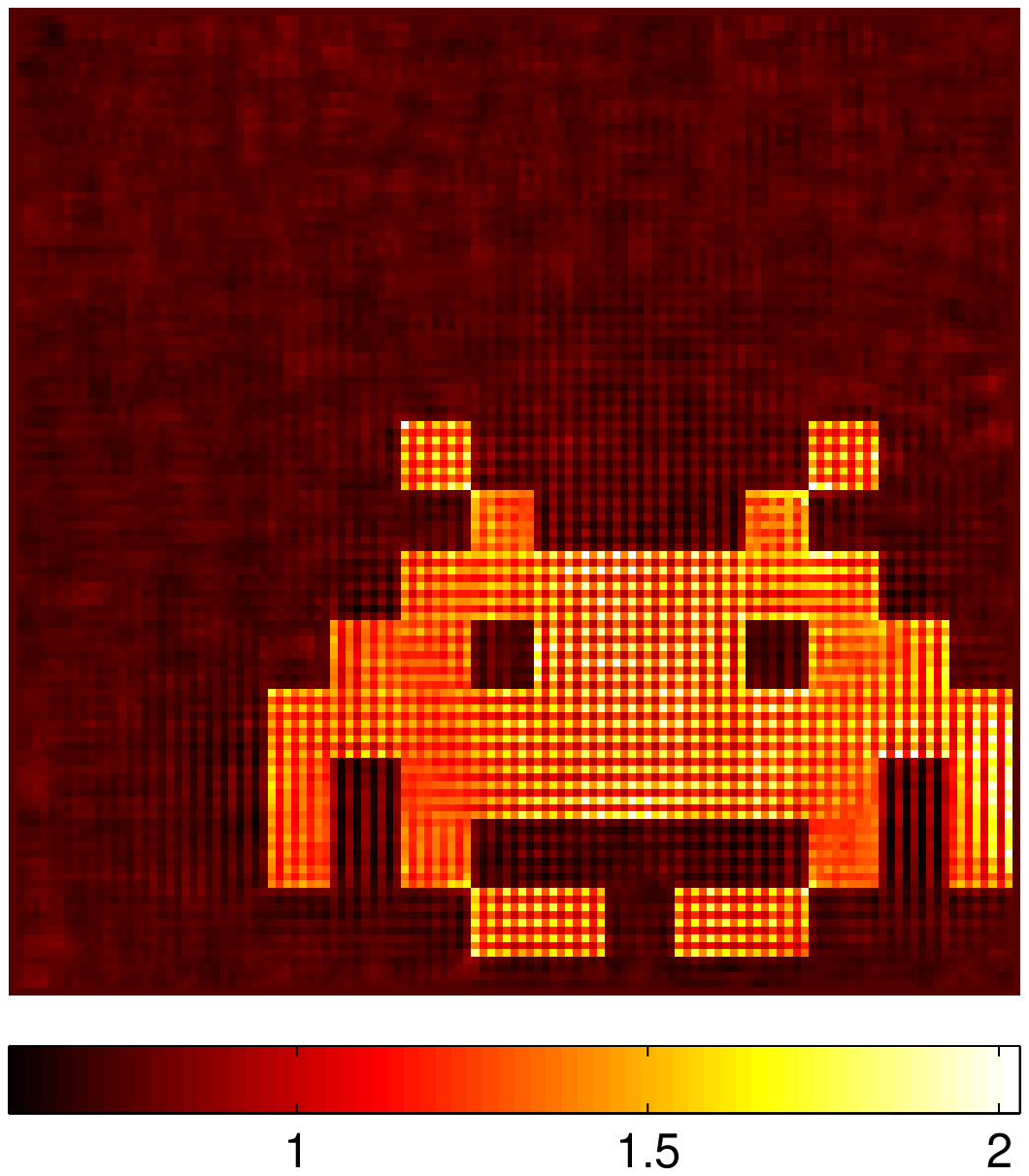}
    \label{fig:57}
    }
    \subfigure[$|\gamma|^{\frac{1}{2}}$ at $\{x=0.5\}$]{
    \includegraphics[width=0.22\textwidth]{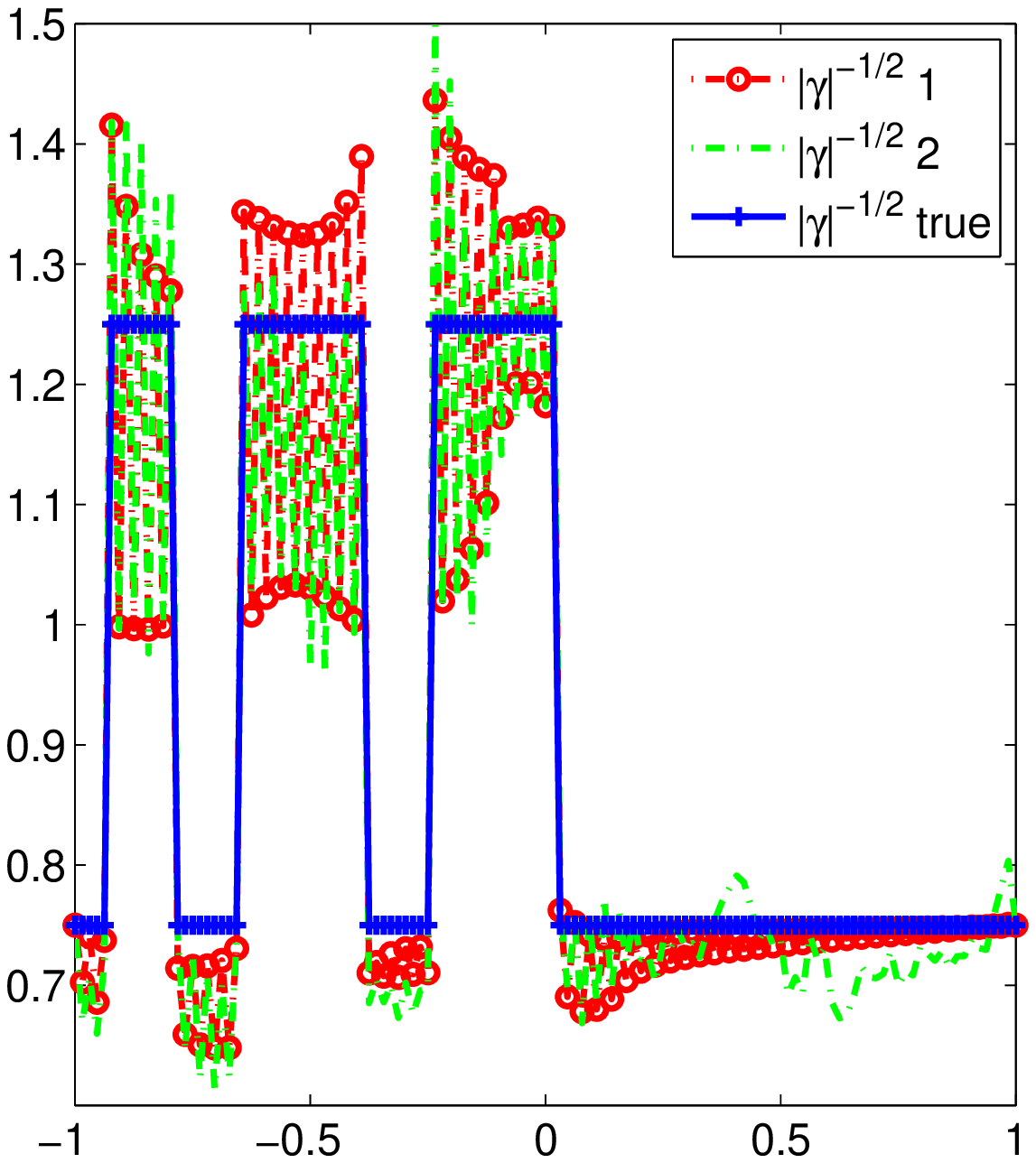}	
    \label{fig:58}
    }    
    \caption{Reconstruction of $(u_1,u_2,|\gamma|^{\frac{1}{2}})$ with true anisotropy and discontinuous $|\gamma|^{\frac{1}{2}}$. \subref{fig:56}: with noisefree data. \subref{fig:57}: with noisy data ($\alpha=30\%$).}
    \label{fig:5}
\end{figure}

\section{Conclusions}
\label{sec:conclu}

This paper presented an explicit reconstruction procedure for a diffusion tensor $\gamma=(\gamma_{ij})_{1\leq i,j\leq 2}$ from power density internal functionals
$H_{ij}=\gamma\nabla u_i\cdot \nabla u_j$ for $1\leq i,j\leq I$ with $I\leq 4$.

Provided that four illuminations $g_i$ are selected so that the  qualitative properties \eqref{eq:cond1} and \eqref{eq:cond2} of the corresponding solutions $u_i$ are verified, we obtained an explicit expression for the anisotropic part of $\gamma$ and showed in Theorem \ref{thm:anisotropy} that errors in the uniform norm of the anisotropy were controlled by errors in the uniform norm of derivatives of the functionals $H_{ij}$.

Once the anisotropy is reconstructed, or is known by other means, we have presented two methods to reconstruct the determinant of $\gamma$. The first functional is based on first reconstructing the angle $\theta$ between $\bfe_x$ and $\gamma\nabla u_1$. The second method is based on solving a coupled system of elliptic equations for $(u_1,u_2)$.  In both cases, we need that the three internal functionals $H_{11}, H_{22}$ and $H_{12}$ satisfy \eqref{eq:positivity} in $X$. And in both cases, we obtain that the error in the uniform norm of the derivative of the determinant was controlled by errors in the uniform norm of derivatives of the functionals $H_{ij}$. 

This shows that the reconstruction of the determinant of $\gamma$ is more stable than that of the anisotropy of $\gamma$. Such a statement was verified by numerical simulations. In the presence of very limited noise generated by the numerical discretization, we obtained accurate reconstructions of the full tensor $\gamma$. However, even in the presence of quite small additional noise on the functionals $H_{ij}$, we observed that the reconstruction of the anisotropy was degrading very rapidly. On the other hand, the reconstruction of the determinant of $\gamma$, assuming the anisotropy known, proved very stable even in the presence of significant noise in the available functionals. In practice, this shows that appropriate regularization procedures on the anisotropy need to be introduced, for instance based on regularity or sparsity assumptions. This now standard step was not considered here. 

The functionals emerging from ultrasound modulation experiments are thus sufficiently rich to provide unique reconstructions of arbitrary diffusion tensors. This should be contrasted with reconstruction procedures based on boundary measurements of elliptic solutions, in which diffusion tensors are defined up to an arbitrary change of variables inside the domain of interest \cite{ALP}. Moreover, reconstructions are stable with a resolution that is essentially independent of the location of the point inside the domain of interest. 

The reconstruction procedure presented here is two dimensional. Although this is not presented here, the reconstruction of the determinant of $\gamma$ knowing its anisotropy generalizes to the $n$-dimensional setting using techniques developed in \cite{BBMT,MB}. The reconstruction of the full diffusion tensor in dimension $n\geq3$ remains open at present.

\section*{Acknowledgment} This research was supported in part by National Science Foundation Grants DMS-0804696 and DMS-1108608. The authors would like to thank the referees for their valuable comments.




\end{document}